\newComments\AL{AL}{red}
\newComments\SB{SB}{red}
\newtheorem{Theorem}{Theorem}[section]
\newtheorem{Proposition}[Theorem]{Proposition}
\newtheorem{Lemma}[Theorem]{Lemma}
\newtheorem{Corollary}[Theorem]{Corollary}
\newtheorem{Definition}[Theorem]{Definition}
\newtheorem{Example}[Theorem]{Example}
\newtheorem{Remark}[Theorem]{Remark}
\newcommand{\fg}{\mathfrak{g}}
\newcommand{\fh}{\mathfrak{h}}
\newcommand {\fosp}{\mathfrak{osp}}
\newcommand {\fder}{\mathfrak{der}}
\newcommand {\Span}{\mathrm{Span}}
\newcommand {\ad}{\mathrm{ad}}
\newcommand {\fgl}{\mathfrak{gl}}
\newcommand {\fq}{\mathfrak{q}}
\newcommand {\K}{\mathbb{K}}
\newcommand {\ev} {{\bar0}}
\newcommand {\od} {{\bar1}}
\begin{document}

\title{Hom-Lie superalgebras  in characteristic 2}


\author{Sofiane Bouarroudj}

 \address{Division of Science and Mathematics, New York University Abu Dhabi, Po Box 129188, Abu Dhabi, United Arab Emirates.}

\email{sofiane.bouarroudj@nyu.edu}

\author{Abdenacer Makhlouf}
\address{IRIMAS-Mathematics Department, University of Haute Alsace, Mulhouse, France.}

\email{abdenacer.makhlouf@uha.fr }

\keywords{Hom-Lie superalgebra; Characteristic 2; Deformation}
\subjclass[2020]{17B61, 17B05, 17A70}

\thanks{SB was supported by the grant NYUAD-065.}

            \date{\today, \currenttime}

\begin{abstract}
The main goal of this paper is to develop the structure theory of Hom-Lie superalgebras in characteristic 2. We discuss their representations, semidirect product, $\alpha^k$-derivations and provide a classification in low dimension.
We introduce another notion of  restrictedness on Hom-Lie algebras in characteristic~2, different from one given by Guan and Chen. This  definition is inspired by the process of queerification of restricted Lie algebras in characteristic 2. We also show that any restricted Hom-Lie algebra in characteristic 2 can be queerified to  give rise to a Hom-Lie superalgebra. 
Moreover, we develop a cohomology theory of Hom-Lie superalgebras in characteristic 2, which provides a cohomology of ordinary Lie superalgebras. Furthermore, we  establish a  deformation theory of Hom-Lie superalgebras in characteristic 2 based on this cohomology.  
\smallskip

\noindent \textbf{Keywords.} Hom-Lie superalgebra, modular Lie superalgebra, characteristic 2, representation, queerification, cohomology, deformation.
\end{abstract}

\maketitle

\tableofcontents

\section{Introduction} \label{SecDef}
Throughout the text, $\mathbb{K}$ stands for an arbitrary field of characteristic 2. In almost all our constructions, $\mathbb{K}$ is arbitrary. There are a few instances where $\mathbb{K}$ is  required to be infinite. We will point out at these instances. 
\subsection{Lie superalgebras in characteristic 2} Roughly speaking, a Lie superalgebra in characteristic 2 is a $\mathbb Z/2$-graded vector space that has a Lie algebra structure on 
the even part, and endowed with a squaring on the odd part that satisfies a modified Jacobi identity, see \S \ref{defp=2} for a precise definition. Because we are in characteristic 2, those Lie superalgebras are sometimes being confused with ${\mathbb Z}/2$-graded Lie algebras, though they are totally different algebras due to the presence of the squaring. They can, however, be considered as $\mathbb Z/2$-graded Lie algebra by forgetting the super structure. The other way round is not always true in general.

The classification of simple Lie superalgebras in characteristic 2 is still an open and wide problem. Nevertheless, Lie superalgebras in characteristic 2 admitting a Cartan matrix have been classified in \cite{BGL}, with the following assumption: each Lie superalgebra posses a Dynkin diagram with only one odd node. The list of non-equivalent Cartan matrices for each Lie superalgebra is also listed in \cite{BGL}. Moreover, it has recently been showed in \cite{BLLSq} that each finite-dimensional simple Lie superalgebra in characteristic 2 can be obtained from a simple finite-dimensional Lie algebra in characteristic 2 by one of two methods, hence reducing the classification to the classification of simple Lie algebras which in its own is a very tough problem. As a matter of fact, there are plenty of (vectorial and non-vectorial) Lie superalgebras in characteristic 2 that have no analogue in other characteristics, see \cite{BeBou, BGLLS, BLLSq, BLS, LeD}. 

It is worth mentioning that the characteristic 2 case is a very tricky case, due to the presence of the squaring. It does require new ideas and techniques. 
\subsection{Hom-Lie superalgebras in characteristic 2}
The first instances of Hom-type algebras appeared in physics literature, see for example \cite{CLK}, where $q$-deformations of some Lie algebras of vector fields led to a structure in which Jacobi identity is no longer satisfied. This class of algebras were formalized and studied in \cite{HLS,LS}, where  they were called {\it Hom-Lie algebras} since the Jacobi identity is  twisted by a homomorphism. The super case were considered in \cite{AM}, where Hom-Lie superalgebras were introduced  as a $\mathbb{Z}/2$-graded generalization of the Hom-Lie algebras. The authors of \cite{AM}  characterized  Hom-Lie admissible superalgebras and proved a $\mathbb{Z}/2$-graded version of a Hartwig-Larsson-Silvestrov Theorem which led to a  construction of  a $q$-deformed Witt superalgebra using $\sigma$-derivations. Moreover, they derived a one parameter family of Hom-Lie  superalgebras deforming the orthosymplectic Lie superalgebra $\fosp(1|2)$. The cohomology of Hom-Lie superalgebras was defined in \cite{AMS}. Notice that all these studies and results were performed over a field of characteristic 0. 
\subsection{The main results}
The main purpose of this this paper is to tackle the positive characteristic and provide a study of Hom-Lie superalgebras in characteristic 2. We introduce the main definitions and some key constructions as well as a cohomology theory fitting with a deformation theory. In Section 2, we recall some basic definitions,  and introduce Hom-Lie algebras and Hom-Lie superalgebras over fields of  characteristic 2 and some related structures. We show that a Lie superalgebra in characteristic 2 and an even Lie algebra morphism give rise to a Hom-Lie superalgebra in characteristic 2. Moreover, we provide a classification of Hom-Lie superalgebras in characteristic 2 in low dimensions. In Section 3, we consider representations and semidirect product of Hom-Lie superalgebras in characteristic 2. The structure map defining a Hom-Lie superalgebra in characteristic 2 allows  a new type of derivations called $\alpha^k$-derivations discussed in Section 4. In Section 5, we introduce the notion of $p$-structure and discuss  queerification of restricted Hom-Lie algebras in characteristic 2. Section 6 is dedicated to cohomology theory. We construct a cohomology complex of a Hom-Lie superalgebra  $\fg$ in characteristic 2 with values in a $\mathfrak{g}$-module. This cohomology complex has no analogue in characteristic $p\not= 2$. In the last section, we provide a deformation theory of Hom-Lie superalgebras in characteristic 2 using the cohomology we constructed.
\section{Backgrounds and main definitions} \label{MainDef}
Let $V$ and $W$ be two vector spaces over $\mathbb{K}$. A map $s:V\rightarrow W$ is called a {\it squaring} if 
\begin{equation}
    \label{squaring}
    s(\lambda x)=\lambda^2 s(x)\quad \text{ for all $\lambda\in \mathbb{K}$ and for all $x\in V$.}
\end{equation}
\subsection{Lie superalgebras in characteristic 2.}\label{defp=2}
Following \cite{LeD}, a \textit{Lie superalgebra} in characteristic 2 is a
superspace $\fg=\fg_\ev\oplus\fg_\od$ over $\mathbb{K}$ such that $\fg_\ev$ is an ordinary Lie algebra, $\fg_\od$ is a
$\fg_\ev$-module made two-sided by symmetry, and on $\fg_\od$ a squaring, denoted by $s_\fg:\fg_\od\rightarrow \fg_\ev$, is
given. The bracket on $\fg_\ev$, as well as the action of $\fg_\ev$ on $\fg_\od$, is denoted by the same symbol $[\cdot ,\cdot]$. For any $x, y\in\fg_\od$, their bracket is then defined by
\[
[x,y]:=  s(x+y)-s(x)-s(y).
\]
The bracket is extended to non-homogeneous elements by bilinearity.
The Jacobi identity involving the squaring reads as follows:
\begin{equation*}
[s(x),y]=[x,[x,y]]\;\text{ for any $x\in \fg_\od$ and $y\in\fg$}.\label{JIS}
\end{equation*}
Such a Lie superalgebra in characteristic 2 will be denoted by $(\fg, [\cdot,\cdot], s)$.

For any Lie superalgebra $\fg$ in characteristic 2, its \textit{derived algebras} are
defined to be (for $i\geq 0$)
\[
\fg^{(0)}: =\fg, \quad
\fg^{(i+1)}=[\fg^{(i)},\fg^{(i)}]+\Span\{s(x)\mid x\in (\fg^{(i)})_\od\}.
\]
A linear map $D:\fg\rightarrow \fg$ is called a \textit{derivation} of the Lie superalgebra $\fg$ if, in addition to
\begin{eqnarray}
\label{Der1} D([x,y])&=&[D(x),y]+[x,D(y)]\quad \text{for any $x\in \fg_\ev$ and $y\in \fg$ we have}\\[2mm]
\label{Der2} D(s(x))&=&[D(x),x]\quad \text{for any $x\in \fg_\od$}.
\end{eqnarray}
It is worth noticing that condition (\ref{Der2}) implies condition (\ref{Der1}) if $x,y\in \fg_\od$. 

We denote the space of all derivations of $\fg$ by $\fder (\fg)$. 

Let $(\fg,[\cdot ,\cdot]_\fg,s_\fg)$ and $(\fh,[\cdot ,\cdot]_\fh,s_\fh)$ be two Lie superalgebras in characteristic 2. An even linear map $\varphi:\fg\rightarrow \fh$ is called a \textit{morphism} (of Lie superalgebras) if, in addition to
\begin{eqnarray*}
\label{Hom1} \varphi([x,y]_\fg)&=&[\varphi(x), \varphi (y)]_\fh\quad \text{for any $x\in \fg_\ev$ and $y\in \fg$, we have}\\[2mm]
\label{Hom2} \varphi(s_\fg(x))&=& s_\fh(\varphi(x))\quad \text{for any $x\in \fg_\od$}.
\end{eqnarray*}

Therefore, morphisms in the category of Lie superalgebras in characteristic 2 preserve not only the bracket but the squaring as well. In particular, subalgebras and ideals have to be stable under the bracket and the squaring. 

An even linear map $\rho: \fg\rightarrow \fgl(V)$ is a \textit{representation of the Lie superalgebra} $(\fg, [\cdot,\cdot],s)$ in the superspace $V$ called \textit{$\fg$-module} if
\begin{equation}\label{repres}
\begin{array}{l}
\rho([x, y])=[\rho (x), \rho(y)]\quad \text{ for any $x, y\in
\fg$}; \text{ and }
\rho (s(x))=(\rho (x))^2\text{ for any $x\in\fg_\od$.}
\end{array}
\end{equation}
\begin{Remark}
{\rm Associative superalgebras in characteristic 2 leads to Lie superalgebras in characteristic 2. The bracket is standard and the squaring is defined by $
s(x)=x\cdot x,$ for every odd element $x$.}
\end{Remark}
\subsection{Hom-Lie algebras in characteristic 2} A \textit{Hom-Lie algebra} in characteristic 2 is a vector space $\fg$ over $\mathbb{K}$ and a map $\alpha \in \mathrm{End}(\fg)$  together with a bracket satisfying the following conditions:
\[
\begin{array}{c}
[x,x]=0, \;\;\; \alpha[x,y]=[\alpha(x),\alpha(y)] \;\;  \text{ and } \;\;  [\alpha(x),[y,z]]+\circlearrowleft (x,y,z)=0, \quad \text{for all $x,y, z\in \fg$.}
\end{array}
\]
Such a Hom-Lie algebra will be denoted by $(\fg, [\cdot,\cdot],\alpha)$.

A \textit{representation of a Hom-Lie algebra} $(\fg , [\cdot , \cdot ]_\fg,\alpha )$ is a triplet $(V, [\cdot,\cdot]_V, \beta)$ where $V$ is a vector space, $\beta \in \fgl(V)$, and  $[\cdot,\cdot]_V$ is the action of $\fg$ on $V$  such that (for all $x,y\in \fg$ and for all $v\in V$):
\begin{equation}\label{Hom-Lie-repr2}
\begin{array}{l}
[\alpha(x), \beta(v)]_V=\beta([x,v]_V),  \quad [[x,y]_\fg, \beta(v)]_V=[\alpha(x), [y,v]_V]_V+ [\alpha(y), [x,v]_V]_V.
\end{array}
\end{equation}
Writing Eq. (\ref{Hom-Lie-repr2}) using the notation of Eq. (\ref{repres}), we put $\rho_\beta:=[\cdot,\cdot]_V$ and obtain (for all $x,y \in \fg$):
\begin{equation}\label{Hom-Lie-repr2V2}
\begin{array}{l}
\rho_\beta(\alpha(x)) \circ \beta=
\beta \circ \rho (x),  \quad \rho([x,y]_\fg) \circ \beta=\rho(\alpha(x))\rho(y)+
\rho(\alpha(y))\rho(x).
\end{array}
\end{equation}
\subsection{Hom-Lie superalgebras in characteristic 2} 
Our main definition is given below. Due to the presence of the squaring, our approach to define Hom-Lie superalgebras in characteristic 2 will differ from that used in  characteristics $p\not =2$, see \cite{AM}. 
\begin{Definition}\label{MainDef} \rm{ 
A \textit{Hom-Lie superalgebra} in characteristic 2 is a quadruple $( \fg, [\cdot,\cdot],s, \alpha)$  consisting of a
$\mathbb{Z}/2$-graded superspace $\fg=\fg_\ev\oplus\fg_\od$ over $\mathbb{K}$, a symmetric bracket $ [\cdot,\cdot]$,  a squaring $s:\fg_\od \rightarrow \fg_\ev$, and an even map $\alpha \in \mathrm{End}(\fg)$  such that
\begin{enumerate} 
\item[\textup{(}i\textup{)}] $(\fg_\ev,[\cdot ,\cdot], \alpha|_{\fg_\ev})$ is an ordinary Hom-Lie algebra, 
\item[\textup{(}ii\textup{)}] $\fg_\od$ is a
$\fg_\ev$-module made two-sided by symmetry,  where the action is still denoted by the bracket $ [\cdot,\cdot]$.
\item[\textup{(}iii\textup{)}] the map 
\begin{equation} \fg_\od\times \fg_\od \rightarrow \fg_\ev \quad (x,y)\mapsto       s(x+y) + s(x) + s(y)
\end{equation}
 is  bilinear, and induces  the bracket on odd elements; namely,  for any $x, y\in\fg_\od$:
\[
[x,y]:=  s(x+y)+s(x)+s(y), 
\]
\item[\textup{(}iv\textup{)}] the following three conditions hold
\begin{eqnarray}\label{JISH} 
[s(x),\alpha(y)] & = &[\alpha(x),[x,y]]\;\text{ for any $x\in \fg_\od$ and $y\in\fg$}, \\ 
\label{multiplicativity} \alpha([x,y]) & = &[\alpha(x),\alpha(y)] \;\text{ for any } x,y\in \fg,\\
\label{multsq} \alpha(s(x)) & = & s(\alpha(x))\; \text{ for any } x \in \fg_\od.
\end{eqnarray} 
\end{enumerate}
}
\end{Definition}
\begin{Remark} {\rm 
\textup{(}i\textup{)} The Jacobi identity on triples in $\{\fg_0,\fg_1,\fg_1\}$ and  $\{\fg_1,\fg_1,\fg_1\}$ follow from condition \ref{JISH}. We, therefore, recover the usual definition of Hom-Lie superalgebras \cite{AM}.

\textup{(}ii\textup{)} Since we work over of field of characteristic 2, skewsymmetry and symmetry coincide  since $ -1 \equiv 1\, (\mod 2)$.
 
\textup{(}iii\textup{)} We may want to consider Hom-Lie superalgebras in characteristic 2 without conditions \eqref{multiplicativity} and \eqref{multsq}, which corresponds to the multiplicativity of the structure map $\alpha$.
}
\end{Remark}

Let  $( \fg, [\cdot,\cdot]_\fg,s_\fg, \alpha)$ and $( \fg', [\cdot,\cdot]_{\fg'},s_{\fg'}, \alpha')$ be two Hom-Lie superalgebras in characteristic 2.  A  map $\phi: \fg\rightarrow \fg'$ is a morphism of Hom-Lie superalgebras if the following conditions are satisfied:   
\begin{equation}
\label{homphialpha}
\phi(  [\cdot, \cdot]_\fg)=[\phi(\cdot),\phi(\cdot)]_{\fg'}, \quad \phi \circ s_\fg=s_{\fg'}\circ \phi,  \quad \phi\circ \alpha=\alpha'\circ \phi.
\end{equation}
Two Hom-Lie superalgebras $( \fg, [\cdot,\cdot]_\fg,s_\fg, \alpha)$ and $( \fg', [\cdot,\cdot]_{\fg'},s_{\fg'}, \alpha')$ are called  \emph{isomorphic} if there exists a homomorphism $\phi:\fg \rightarrow \fg'$ as in (\ref{homphialpha}) that it is bijective.\\

Let  $( \fg, [\cdot,\cdot],s, \alpha)$ be a Hom-Lie superalgebra in characteristic 2. Let $I$ be a subset  of $\fg$. The set $I$ is called an ideal of $\fg$ if and only if $I$ is closed under addition and scalar multiplication, together with
\[
[I,\fg]\subseteq I, \;  \alpha(I)\subseteq I \text{ and $s(x)\in I$ whenever $x\in I\cap \fg_\od$.}
\]
In particular, if the ideal $I$ is homogeneous; namely $I=I\cap \fg_\ev\oplus I\cap\fg_\od=I_\ev\oplus I_\od$ then the condition involving the squaring reads $s(x)\in I_\ev$ for all $x\in I_\od$. In addition, the superspace $\fg/I$ is also a Hom-Lie superalgebra in characteristic 2. The bracket and the squaring are defined as follows:
\[
\begin{array}{rcll}
[x +I, y+I]&:=&[x,y]+I & \text{for all $x,y\in \fg$,}\\[2mm]
s(x+I) & := & s(x)+I & \text{ for all $x\in \fg_\od$,}
\end{array}
\]
while the twist map $\tilde \alpha$ on $\fg/I$ is defined by
\[
\tilde \alpha(x+I)=\alpha(x)+I \quad \text{for all $x\in \fg$}.
\]
We will only show that the squaring is well-defined. Suppose that $\tilde x - x  =i\in I_\od$ we have, 
\[
s(\tilde x)=s(x+i)=s(x)+s(i)+[x,i]=s(x) \mod (I).
\]

In the following proposition, we will show that an ordinary Lie superalgebra together with a morphism give rise to a Hom-Lie superalgebra structure on the underlying vector space.

\begin{Proposition} \label{alphamap}
Let  $( \fg, [\cdot,\cdot],s)$ be a Lie superalgebra in characteristic 2, and let $\alpha:\fg\rightarrow \fg$ be an even  superalgebra morphism. Then  $( \fg, [\cdot,\cdot]_{\alpha},s_{\alpha}, \alpha)$, where $ [\cdot,\cdot]_{\alpha}=\alpha\circ  [\cdot,\cdot]$ and $s_{\alpha}=\alpha\circ s$, is a Hom-Lie superalgebra in characteristic 2.
\end{Proposition}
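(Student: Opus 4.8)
The plan is to check the four conditions (i)--(iv) of the definition one at a time, in each case exploiting the two defining properties of an even Lie superalgebra morphism, $\alpha([x,y]) = [\alpha(x),\alpha(y)]$ and $\alpha(s(x)) = s(\alpha(x))$. The organizing principle is that the twisted bracket $[\cdot,\cdot]_\alpha=\alpha\circ[\cdot,\cdot]$ and the twisted squaring $s_\alpha=\alpha\circ s$ each carry one extra application of $\alpha$; consequently, after pushing every occurrence of $\alpha$ to the outside by means of the morphism property, each Hom-identity to be verified collapses to $\alpha^2$ applied to an identity that already holds in the Lie superalgebra $(\fg,[\cdot,\cdot],s)$. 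Since $\alpha$ is linear and $\alpha^2(0)=0$, each such check reduces to a genuine axiom of the original structure.

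First I would dispose of the preliminaries. The map $s_\alpha=\alpha\circ s$ is again a squaring because $\alpha$ is linear: $s_\alpha(\lambda x)=\alpha(\lambda^2 s(x))=\lambda^2 s_\alpha(x)$. For condition (iii) I would compute the polarization $s_\alpha(x+y)+s_\alpha(x)+s_\alpha(y)=\alpha(s(x+y)+s(x)+s(y))=\alpha([x,y])=[x,y]_\alpha$; this is bilinear in $(x,y)$ because it is the image under the linear map $\alpha$ of the original bilinear polarization, and it manifestly induces the twisted bracket on $\fg_\od$. Next, the multiplicativity conditions \eqref{multiplicativity} and \eqref{multsq} of (iv) are immediate: $\alpha([x,y]_\alpha)=\alpha^2([x,y])=\alpha([\alpha(x),\alpha(y)])=[\alpha(x),\alpha(y)]_\alpha$, and likewise $\alpha(s_\alpha(x))=\alpha^2(s(x))=\alpha(s(\alpha(x)))=s_\alpha(\alpha(x))$.

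For the Hom--Jacobi identity with squaring \eqref{JISH} I would expand both sides: the left side is $\alpha([s_\alpha(x),\alpha(y)])=\alpha^2([s(x),y])$, while the right side is $\alpha([\alpha(x),\alpha([x,y])])=\alpha^2([x,[x,y]])$; these agree precisely because $[s(x),y]=[x,[x,y]]$ is the Jacobi identity of the original Lie superalgebra. Condition (i), that $(\fg_\ev,[\cdot,\cdot]_\alpha,\alpha|_{\fg_\ev})$ is an ordinary Hom-Lie algebra, follows the same pattern: $[x,x]_\alpha=\alpha([x,x])=0$, multiplicativity is the computation just made, and the Hom--Jacobi sum equals $\alpha^2$ of the cyclic sum $[x,[y,z]]+\circlearrowleft(x,y,z)$, which vanishes by the ordinary Jacobi identity on $\fg_\ev$. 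Finally, for condition (ii) I would verify that $\fg_\od$ is a $\fg_\ev$-module in the Hom sense, i.e. that the representation conditions \eqref{Hom-Lie-repr2} hold with twist $\alpha|_{\fg_\od}$ and action $[\cdot,\cdot]_\alpha$; both reduce, after extracting an outer $\alpha^2$, to the Leibniz form $[[x,y],v]=[x,[y,v]]+[y,[x,v]]$ of the Jacobi identity in $\fg$, while two-sidedness is automatic from the symmetry of the bracket.

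I do not anticipate a genuine obstacle: the whole argument is bookkeeping driven by the single slogan that the morphism property converts inner $\alpha$'s into an outer $\alpha^2$, after which the ambient Lie superalgebra axioms close every computation. The only step that warrants a little care is condition (iii), where one must confirm that applying the linear map $\alpha$ preserves the \emph{bilinearity} of the polarization and not merely its values; this is exactly what certifies that the twisted data form a bona fide Hom-Lie superalgebra in characteristic~2 rather than an unstructured bracket-squaring pair.
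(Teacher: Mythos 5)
Your proof is correct and follows essentially the same route as the paper: the two computations the paper actually carries out — that $s_\alpha(\lambda x)=\lambda^2 s_\alpha(x)$ and that $[s_\alpha(x),\alpha(y)]_\alpha=\alpha^2([s(x),y])=\alpha^2([x,[x,y]])=[\alpha(x),[x,y]_\alpha]_\alpha$ — are exactly yours, with all remaining conditions reduced in the same way to $\alpha^2$ applied to an axiom of the original superalgebra. The only difference is that the paper delegates the bracket-only conditions to the earlier work of Ammar--Makhlouf, whereas you verify them explicitly (including the polarization/bilinearity check for condition (iii), which the paper leaves implicit); this is a matter of completeness, not of method.
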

\begin{proof}The first part of the proof is given in \cite{AM}. We have to check Eqns. \eqref{squaring} and \eqref{JISH}. Indeed, let $\lambda\in \mathbb{K}$ and let $x\in \fg_\od$. We have
\begin{align*}
s_{\alpha}(\lambda x)=\alpha (s(\lambda x))=\alpha(\lambda^2 s(x))=\lambda^2 \alpha(s(x))=\lambda^2 s_{\alpha}( x).
\end{align*}
On the other hand, for any $x\in \fg_\od$ and $y\in\fg$, we have
\begin{align*}
&[s_{\alpha}(x),\alpha(y)]_{\alpha}=\alpha([\alpha( s(x)),\alpha(y)])=\alpha^2([ s(x),y])=\alpha^2([x,[x,y]])\\ & =\alpha([\alpha(x),\alpha([x,y])])=[\alpha(x),[x,y]_{\alpha}]_{\alpha}.\qedhere
\end{align*} \end{proof}


More generally, let $( \fg, [\cdot,\cdot],s, \alpha)$ be a Hom-Lie superalgebra in characteristic 2 and let $\beta:\fg\rightarrow \fg$ be an even weak superalgebra morphism (the third condition of (\ref{homphialpha}) is not necessary satisfied). Then  $( \fg, [\cdot,\cdot]_{\beta}:=\beta\circ [\cdot,\cdot] ,s_{\beta}:=\beta\circ s, \alpha\circ \beta )$ is a Hom-Lie superalgebra in characteristic 2. The proof is similar to that of Proposition \ref{alphamap}.

\begin{Example}\label{examoo}
{\em Consider the ortho-orthogonal Lie superalgebra $\fg:={\mathfrak o}{\mathfrak o}_{I\Pi}^{(1)}(1|2)$ \textup{(}see \cite{BGL, LeD}\textup{)} spanned by the even vectors $h,x_2, y_2$ and the odd vectors $x_1, y_1$ with the non-zero brackets:
\[
[x_1,y_1]=[x_2,y_2]=h, \quad [h,x_1]=x_1, \quad [h,y_1]=y_1, \quad [x_2,y_1]=x_1, \quad [y_2,x_1]=y_1,
\]
and the squaring
\[
s(x_1)=x_2,\quad s(y_1)=y_2.
\]
Let us define the map $\alpha$ on the vector space underlying ${\mathfrak o}{\mathfrak o}_{I\Pi}^{(1)}(1|2)$:
\[
\begin{array} {lll}
\alpha(x_1)=\delta_1 x_1 + \delta_2 y_1, &  \alpha(y_1)=\varepsilon_1 x_1 + \varepsilon_2 y_1,&   \alpha(x_2)=\lambda_1 h+\lambda_2 x_2+ \lambda_3 y_2,\\ [2mm]
\alpha(y_2)=\beta_1 h+\beta_2 x_2+ 
\beta_3 y_2, & \alpha(h)= \gamma_1 h. & 
\end{array}\]
A direct computation shows that the map $\alpha$ is a morphism of Lie superalgebras if and only if \textup{(}where we have put for simplicity $T:=1+\delta_2 \varepsilon_1+ \delta_1 \varepsilon_2$\textup{)}:
\[
\gamma_1= (1+T)^2, \quad \beta_1=\varepsilon_1\varepsilon_2, \quad \beta_2=\varepsilon_1^2, \quad \beta_3=\varepsilon_2^2, \quad \lambda_1=\delta_1 \delta_2, \quad \lambda_2=\delta_1^2, \quad \lambda_3=\delta_2^2;
\]
together with
\begin{equation}\label{eqT}
\varepsilon_1\, T=\varepsilon_1\, T^2= \varepsilon_2\, T=\varepsilon_2\, T^2=\delta_1 \, T=\delta_1 \, T^2=\delta_2 \, T=\delta_2 \, T^2=T (1+T)=0.
\end{equation}
The only solutions to Eqns. \textup{(}\ref{eqT}\textup{)} that do  not produce the zero map are given by $T=0$.

We can, therefore, construct a Hom-Lie superalgebra by means of the map $\alpha$, depending on three parameters, as in Proposition \ref{alphamap}. So, we have
\[
\begin{array}{lll}
 \alpha(x_1)=\delta_1 x_1 + \delta_2 y_1, &  \alpha(y_1)=\varepsilon_1 x_1 + \varepsilon_2 y_1, &  \alpha(x_2)=\delta_1 \delta_2 h+\delta_1^2 x_2+\delta_2^2 y_2, \\[2mm] \alpha(y_2)=\varepsilon_1\varepsilon_2 h+\varepsilon_1^2 x_2+ \varepsilon_2^2 y_2, & \alpha(h)=  h. &
\end{array}
\]
such that $\left( \begin{array}{cc}
\varepsilon_2 & \varepsilon_1\\
\delta_2 &\delta_1
\end{array}\right)\in SL_2(\mathbb{K})$.

In particular, we have the following Hom-Lie superalgebra in characteristic 2, which we denote by  ${\mathfrak o}{\mathfrak o}_{I\Pi}^{(1)}(1|2)_\alpha$, defined by 
the brackets:
\[
[x_1,y_1]_\alpha=[x_2,y_2]_\alpha=h, \; [h,x_1]_\alpha=x_1, \; [h,y_1]_\alpha=\varepsilon x_1+y_1,\;  [x_2,y_1]_\alpha=x_1, \; [y_2,x_1]_\alpha=\varepsilon x_1+y_1,
\]
 with the corresponding squaring: 
\[
s(x_1)= x_2, \quad s(y_1)= \varepsilon h+\varepsilon^2x_2+ y_2,
\]
and the twist map  \begin{align*}
&\alpha(x_1)=x_1,\quad \alpha(y_1)=\varepsilon x_1+y_1,\quad \alpha(x_2)= x_2, \quad \alpha(y_2)=\varepsilon h+\varepsilon^2 x_2+y_2, \quad \alpha(h)=  h, 
\end{align*}
where $\varepsilon$ is a parameter in $\mathbb{K}$. We recover the Lie superalgebra ${\mathfrak o}{\mathfrak o}_{I\Pi}^{(1)}(1|2)$ for $\varepsilon=0$.
}

\end{Example} 

\subsection{The classification in low dimensions}
Let us assume here that the field $\mathbb{K}$ is infinite (for instance, algebraically closed). For the classification of Hom-Lie algebras and superalgebras  in low dimensions, see \cite{MS,GSS, GSSc,  LL,ORS1,ORS2,R,WZW}.
\subsubsection{The case $\mathrm{sdim}(\fg)=1|1$}

Assume that $\fg_\ev=\mathrm{Span}\{e\}$ and $\fg_\od=\mathrm{Span}\{f\}$. 
We set 
\begin{align*}
\alpha(e)=\lambda_1e,\quad \alpha(f)=\lambda_2f, \quad s_\fg(f)=\rho \, e,\quad
[e,e]=0,\quad [e,f]=\gamma f.
\end{align*}
It follows that $[f,f]=s(2f)-2s(f)=2s(f)=2\rho e=0.$
By straightforward computations on the conditions, one gets that the only non-trivial case is given by $\lambda_1=1$ and $\gamma=\rho \lambda_2$. Therefore, any $(1|1)$-dimensional Hom-Lie superalgebra in characteristic 2 is isomorphic to the Hom-Lie superalgebra given, with respect to basis $\{e,f\}$, by 
\begin{align*}
& \alpha(e)=e,\quad \alpha(f)=\lambda f, \quad s_\fg(f)=\rho\, e,\\
& [e,e]=0,\quad [e,f]=\rho \lambda f,\quad [f,f]=0,
\end{align*}
where $\lambda$ and $\rho$ are non-zero parameters. As the field $\mathbb{K}$ is infinite, we have a family of Hom-Lie superalgebras that depends on  parameters $\lambda$ and $\rho$.
\subsubsection{The case $\mathrm{sdim}(\fg)=1|2$ }
Assume that $\fg_\ev=\mathrm{Span}\{e\}$ and $\fg_\od=\mathrm{Span}\{f_1,f_2\}$. We define the brackets as (where $a_{i}, b_i \in \mathbb{K}$ for $i,j=1,2$):
\begin{align*}
&
[e,f_1]=a_{1}f_1+a_{2}f_2,\quad [e,f_2]=b_{1}f_1+b_{2}f_2,
\end{align*}
and finally the squaring as (where $\rho_i\in \mathbb{K}$ for $i=1,2,3$):
\[
s(f_1)=\rho_1 \, e,\quad s(f_2)=\rho_2\, e, \quad s(f_1+f_2)=\rho_3\, e.
\]
Let us consider a linear map $\alpha$ by which we will construct the Hom-structure. As $\alpha$ preserves the $\mathbb{Z}/2 \mathbb Z$-grading, and by using the Jordan decomposition we distinguish two cases:  \\

\underline{Case 1:} Suppose that $\alpha$ is given by (where $s, t_1, r_2\in \mathbb K$):
\begin{align*}
&\alpha(e)= s e,\quad \alpha(f_1)=t_1 f_1, \quad \quad \alpha(f_2)=r_2 f_2.
\end{align*}
A direct computation shows that there are only the following sub-cases:

\underline{Sub-case 1a):} We have $\rho_3=\rho_1+\rho_2,\;  \rho_1\not=0, \;  s=t_1^2, \; \rho_2\, (s+r_2^2)=0$ and $a_i=b_i=0$ for $i=1,2$. Here are the two possible cases: 
\[
\begin{array}{l} 
\rho_3=\rho_1+\rho_2,\;  \rho_1\not=0, \; \rho_2=0, \;  s=t_1^2, \; a_1=b_1=a_2=b_2=0,\; r_2 \text{ arbitrary}; \text{ or } \\[2mm]
\rho_3=\rho_1+\rho_2,\;  \rho_1, \rho_2\not=0, \;  s=t_1^2, \; t_1=r_2, \; a_1=b_1=a_2=b_2=0.
\end{array}
\]

\underline{Sub-case 1b):} We have $\rho_1=\rho_2=\rho_3=0$ together with \[
b_1(t_1 + sr_2)=0, \; b_2r_2 (1+s)=0,\; a_1 t_1 (1+s)=0,\; a_2 (r_2+st_1)=0.
\]
We can disregard this case, because it produces a Lie algebra instead of a Lie superalgebra. 

\underline{Sub-case 1c):} We have $\rho_1+\rho_2+\rho_3\not =0,\; \rho_1\not=0,$ together with 
\[ s=t_1^2= t_1 r_2,\;  \rho_2t_1^2=r_2^2\rho_2,\; a_1=b_1=a_2=b_2=0.
\]
Here are the two possible cases: 
\[
\begin{array}{l} 
\rho_1+\rho_2+\rho_3 \not =0,\;  \rho_1\not=0, \; \rho_2=0, \;  s=t_1^2=t_1 r_2,\; r_2\not=0, \; a_1=b_1=a_2=b_2=0; \text{ or } \\[2mm]
\rho_1+\rho_2+\rho_3 \not =0,\;  \rho_1, \rho_2\not=0, \;  s=t_1^2, \; t_1=r_2\not=0, \; a_1=b_1=a_2=b_2=0.
\end{array}
\]
\underline{Case 2:} Suppose that $\alpha$ is given by (where $s, t_1\in \mathbb K$):
\begin{align*}
&\alpha(e)= s e,\quad \alpha(f_1)=t_1 f_1, \quad \quad \alpha(f_2)=f_1+t_1 f_2.
\end{align*}
A direct computation shows that there are only the following sub-cases:

\underline{Subcase 2a):} We have $\rho_1, \rho_2\not=0$ but $\rho_3$ arbitrary, together with 
\[
a_1=a_2=b_1=b_2=0,\; s=t_1^2, \; \rho_1 (1+t_1) = t_1 (\rho_2+\rho_3).
\]

\underline{Subcase 2b):} We have $\rho_1, \rho_3\not=0$ but $\rho_2=0$ arbitrary, together with 
\[
a_1=a_2=b_1=b_2=0,\; s=t_1^2, \; \rho_1 (1+t_1) = t_1 \rho_3.
\]

\underline{Subcase 2c):} We have $\rho_1\not=0$ but $\rho_2=\rho_3=0$, together with 
\[
a_1=a_2=b_1=b_2=0,\; s=1, \; t_1=1.
\]

\underline{Subcase 2d):} We have $\rho_1=0,\; \rho_2\not=0$ but $\rho_3$ arbitrary, together with 
\[
a_1=a_2=b_1=b_2=0,\; s=t_1^2, \;t_1(\rho_2+\rho_3)=0.
\]

\underline{Subcase 2e):} We have $\rho_1=\rho_2=0,$\; but $\rho_3 \not=0$, together with 
\[
a_1=a_2=b_1=b_2=0,\; s=0, \; t_1=0.
\]
The tables below summarize our finding. We find it convenient to order the Hom-Lie superalgebras into two groups: (i) of type I are those for which the $\fg_0$-module structure on $\fg_\od$ is trivial; (ii) of type II are those for which the $\fg_0$-module structure on $\fg_\od$ is not trivial.
\begin{table}
\centering
\begin{tabular}{| c | l | c|}\hline
 The HLSA & The squaring $s$& The conditions \\ \hline
$A_1$ & $\begin{array}{lcl}
s(f_1) & = & \rho_1 e, \\[1mm]
s(f_2) &= &0,\\[1mm]
s(f_1+\lambda f_2)& = & \rho_1 e 
\end{array}$  & $\begin{array}{l}
\rho_1 \not =0,\; s=t_1^2,\\[1mm]
r_2 \text{ arbitrary}
\end{array}$  \\[1mm] \hline
$A_2$ & $\begin{array}{lcl}
s(f_1) & = & \rho_1 e, \\[1mm]
s(f_2) &= &\rho_2 e,\\[1mm]
s(f_1+\lambda f_2)& = & (\rho_1+ \lambda^2 \rho_2)e 
\end{array}$  & $\begin{array}{l}
\rho_1, \rho_2\not =0,\\[1mm]
s=t_1^2, r_2=t_1
\end{array}$  \\[1mm] \hline
$A_3$ & $\begin{array}{lcl}
s(f_1) & = & \rho_1 e,\\[1mm]
s(f_2) &= &0,\\[1mm]
s(f_1+\lambda f_2)& = &((1+\lambda)\rho_1+\lambda\rho_3)e 
\end{array}$  & $\begin{array}{l}
\rho_1\not =0,\;\rho_1 +\rho_3 \not =0, \\[1mm]
s=t_1^2=t_1r_2,\; r_2\not=0
\end{array}$  \\[1mm] \hline
$A_4$ & $\begin{array}{lcl}
s(f_1) & = & \rho_1 e, \\[1mm]
s(f_2) &= &\rho_2 e,\\[1mm]
s(f_1+\lambda f_2)& = &\lambda(\rho_1+(1+\lambda)\rho_2+\rho_3)e \\[1mm]
&&+\rho_1e 
\end{array}$  & $\begin{array}{l}
\rho_1, \rho_2\not =0,\;\rho_1+\rho_2 +\rho_3 \not =0, \\[1mm]
s=t_1^2, \; t_1=r_2,\; r_2\not=0
\end{array}$  \\[1mm] \hline
\end{tabular}
\caption{Type I (i.e. $[\fg_\ev, \fg_\od] =\{0\}$) with $\alpha(e)=s e,\; \alpha(f_1)=t_1 f_1,\; \alpha(f_2)=r_2 f_2$}\label{tab6}
\end{table}

\begin{table}
\centering
\begin{tabular}{| c | l | c|}\hline
 The HLSA & The squaring $s$& The conditions \\ \hline
$A_5$ & $\begin{array}{lcl}
s(f_1) & = & \rho_1 e,\\[1mm]
s(f_2) &= & \rho_2 f_2,\\[1mm]
s(f_1+\lambda f_2)& =& \lambda(\rho_1+(1+\lambda)\rho_2)e\\[1mm]
&&+(\lambda \rho_3+ \rho_1)e
\end{array}$  & $\begin{array}{l}
\rho_1, \rho_2 \not =0,\; \displaystyle \rho_3= \frac{1+t_1}{t_1}\rho_1 +\rho_2,\\[1mm]
s=t_1^2
\end{array}$  \\[1mm] \hline
$A_6$ & $\begin{array}{lcllcl}
s(f_1) & = &
\rho_1 e,  \\[1mm]
s(f_2) &= & 0,\\[1mm]
s(f_1+\lambda f_2)& = & (\lambda(\rho_1+\rho_3)+\rho_1)e  &&&
\end{array}$  & $\begin{array}{l}
\rho_1, \rho_3 \not =0,\; \displaystyle \rho_3= \frac{1+t_1}{t_1}\rho_1,\\[1mm]
s=t_1^2
\end{array}$  \\[1mm] \hline
$A_7$ & $\begin{array}{lcl}
s(f_1) & = & \rho_1 e, \\[1mm]
s(f_2) &= & 0,\\[1mm]
s(f_1+\lambda f_2)& = & \rho_1(1+\lambda)e 
\end{array}$  & $\begin{array}{l}
 \rho_1 \not = 0,\\[1mm]
s=t_1=1
\end{array}$  \\[1mm] \hline
$A_8$ & $\begin{array}{lcl}
s(f_1) & = & 0, \\[1mm] 
s(f_2) &= & \rho_2 e, \\
s(f_1+\lambda f_2)& = & \lambda (\rho_2+\rho_3+\lambda\rho_2)e 
\end{array}$  & $\begin{array}{l}
 \rho_2 \not = 0, \; \rho_3 \text{ arbitrary}\\[1mm]
s=t_1=0,
\end{array}$  \\[1mm] \hline
$A_9$ & $\begin{array}{lcl}
s(f_1) & = & 0, \\[1mm]
s(f_2) &= & \rho_2 e,\\[1mm]
s(f_1+\lambda f_2)& = & \lambda^2\rho_2 e 
\end{array}$  & $\begin{array}{l}
 \rho_2 \not = 0,\\[1mm]
s=t_1^2,\; t_1\not =0
\end{array}$  \\[1mm] \hline
$A_{10}$ & $\begin{array}{lcl}
s(f_1) & = & 0, \\[1mm]
s(f_2) &= & 0,\\[1mm]
s(f_1+\lambda f_2)& = &\lambda \rho_3 e 
\end{array}$  & $\begin{array}{l}
 \rho_3 \not = 0,\\[1mm]
s=t_1 =0
\end{array}$  \\[1mm] \hline
\end{tabular}
\caption{Type I (i.e. $[\fg_\ev, \fg_\od] =\{0\}$) with $\alpha(e)=s e,\; \alpha(f_1)=t_1 f_1,\; \alpha(f_2)=f_1+t_1 f_2$}\label{tab6}
\end{table}

\begin{table}[H]
\centering
\begin{tabular}{| c | l | l| c|}\hline
 The HLSA & The squaring $s$& $[\fg_\ev,\fg_\od]$  & The conditions \\ \hline
$B_1$ & $\begin{array}{lcl}
s(f_1) & = & 0, \\
s(f_2) &= &0,\\
s(f_1+\lambda f_2)& = & 0
\end{array}$  & $\begin{array}{lcl}
[e,f_1]& = & a_1 f_1 + a_2 f_2,\\[1mm] 
[e,f_2] & = & b_1 f_1 + b_2 f_2
\end{array}$ &$\begin{array}{l}
s=1,\\[1mm]
t_1=r_2\\[1mm]
a_1, a_2, b_1, b_2 \text{ arbitrary}
\end{array}$  \\[1mm] \hline
$B_2$ & $\begin{array}{lcl}
s(f_1) & = & 0, \\
s(f_2) &= &0,\\
s(f_1+\lambda f_2)& = & 0
\end{array}$  & $\begin{array}{lcl}
[e,f_1]& = & a_1 f_1,\\[1mm] 
[e,f_2] & = &  b_2 f_2
\end{array}$ &$\begin{array}{l}
s=1,\\[1mm]
t_1 \not =r_2\\[1mm]
a_1, b_2 \text{ arbitrary}
\end{array}$  \\[1mm] \hline
$B_3$ & $\begin{array}{lcl}
s(f_1) & = & 0, \\
s(f_2) &= &0,\\
s(f_1+\lambda f_2)& = & 0
\end{array}$  & $\begin{array}{lcl}
[e,f_1]& = & a_1 f_1 +a_2f_2,\\[1mm]
[e,f_2] & = & b_1f_1+ b_2 f_2
\end{array}$ &$\begin{array}{l}
s\not =0,1,\\[1mm]
t_1=r_2=0\\[1mm]
a_1,a_2, b_1, b_2 \text{ arbitrary}
\end{array}$  \\[1mm] \hline
$B_4$ & $\begin{array}{lcl}
s(f_1) & = & 0, \\
s(f_2) &= &0,\\
s(f_1+\lambda f_2)& = & 0
\end{array}$  & $\begin{array}{lcl}
[e,f_1]& = &0, \\[1mm]
[e,f_2] & = &  b_2 f_2
\end{array}$ &$\begin{array}{l}
s\not =0,1,\\[1mm]
t_1\not =0, r_2=0\\[1mm]
b_2 \text{ arbitrary}
\end{array}$  \\[1mm] \hline
$B_5$ & $\begin{array}{lcl}
s(f_1) & = & 0, \\
s(f_2) &= &0,\\
s(f_1+\lambda f_2)& = & 0
\end{array}$  & $\begin{array}{lcl}
[e,f_1]& = &0, \\[1mm]
[e,f_2] & = &  b_1 f_1
\end{array}$ &$\begin{array}{l}
s\not =0,1,\\[1mm]
t_1= rs_2,\\[1mm]
b_1\not=0 \text{ and arbitrary}
\end{array}$  \\[1mm] \hline
$B_6$ & $\begin{array}{lcl}
s(f_1) & = & 0, \\
s(f_2) &= &0,\\
s(f_1+ \lambda f_2)& = & 0
\end{array}$  & $\begin{array}{lcl}
[e,f_1]& = &a_2 f_2, \\[1mm]
[e,f_2] & = &  0
\end{array}$ &$\begin{array}{l}
s\not =0,1,\\[1mm]
t_1 \not = rs_2,\\[1mm]
a_2\not =0 \text{ and arbitrary}
\end{array}$  \\[1mm] \hline
$B_7$ & $\begin{array}{lcl}
s(f_1) & = & 0, \\
s(f_2) &= &0,\\
s(f_1+\lambda f_2)& = & 0
\end{array}$  & $\begin{array}{lcl}
[e,f_1]& = &a_2 f_2, \\[1mm]
[e,f_2] & = & b_2 f_2
\end{array}$ &$\begin{array}{l}
s\not =0,1,\\[1mm]
r_2=0,\\[1mm]
a_2, b_2\text{ arbitrary}
\end{array}$  \\[1mm] \hline
\end{tabular}
\caption{Type II (i.e. $[\fg_\ev, \fg_\od] \not =\{0\}$) with $\alpha(e)=s e,\; \alpha(f_1)=t_1 f_1,\; \alpha(f_2)=r_2 f_2$}\label{tab6}
\end{table}

\begin{table}[H]
\centering
\begin{tabular}{| c | l | l| c|}\hline
 The HLSA & The squaring $s$& $[\fg_\ev,\fg_\od]$  & The conditions \\ \hline
$B_8$ & $\begin{array}{lcl}
s(f_1) & = & 0, \\
s(f_2) &= &0,\\
s(f_1+\lambda f_2)& = & 0
\end{array}$  & $\begin{array}{lcl}
[e,f_1]& = & a_1 f_1,\\[1mm] 
[e,f_2] & = & b_1 f_1 + a_1 f_2
\end{array}$ &$\begin{array}{l}
s=1,\\[1mm]
t_1=0\\[1mm]
a_1, b_1 \text{ arbitrary}
\end{array}$  \\[1mm] \hline
$B_9$ & $\begin{array}{lcl}
s(f_1) & = & 0, \\
s(f_2) &= &0,\\
s(f_1+\lambda f_2)& = & 0
\end{array}$  & $\begin{array}{lcl}
[e,f_1]& = & 0,\\[1mm] 
[e,f_2] & = & b_1 f_1
\end{array}$ &$\begin{array}{l}
s\not =1,\\[1mm]
t_1=0\\[1mm]
b_1 \not =0 \text{ and arbitrary}
\end{array}$  \\[1mm] \hline
$B_{10}$ & $\begin{array}{lcl}
s(f_1) & = & 0, \\
s(f_2) &= &0,\\
s(f_1+\lambda f_2)& = & 0
\end{array}$  & $\begin{array}{lcl}
[e,f_1]& = & a_1 f_1,\\[1mm] 
[e,f_2] & = & a_1 f_2
\end{array}$ &$\begin{array}{l}
s=1,\\[1mm]
t_1 \not =0\\[1mm]
a_1 \not =0 \text{ and arbitrary}
\end{array}$  \\[1mm] \hline
\end{tabular}
\caption{Type II (i.e. $[\fg_\ev, \fg_\od] \not =\{0\}$) with $\alpha(e)=s e,\; \alpha(f_1)=t_1 f_1,\; \alpha(f_2)=f_1+t_1 f_2$}\label{tab6}
\end{table}
\section{Representations and semidirect product}
\begin{Definition}\label{defmodalpha} {\rm A \textit{representation of a Hom-Lie superalgebra} $( \fg, [\cdot,\cdot]_\fg,s_\fg, \alpha)$ is a triple $(V, [\cdot,\cdot]_V, \beta)$, where $V$ is a superspace, $\beta$ an even map  in $\fgl(V)$, and  $[\cdot,\cdot]_V$ is the action of $\fg$ on $V$  such that
\begin{equation}\label{Hom-Lie-repr}
\begin{array}{lcl}
[\alpha(x), \beta(v)]_V & = &\beta([x,v]_V)\; \text{ for any  $x\in \fg$ and $v\in V$},  \\[2mm] 
[[x,y]_\fg, \beta(v)]_V & = & [\alpha(x), [y,v]_V]_V+ [\alpha(y), [x,v]_V]_V\; \text{ for any  $x,y\in \fg$ and $v\in V$,}\\[2mm]
[s_\fg(x),\beta(v)]_V & =& [\alpha(x),[x,v]_V]_V\;\text{ for any $x\in \fg_\od$ and $v\in V$}.
\end{array}
\end{equation}
}
We say that $V$ is a $\fg$-module.
\end{Definition}
Sometimes it is more convenient to use the notation $\rho_\beta=[\cdot, \cdot]_V$ and write: 
\begin{equation}\label{Hom-Lie-reprV2}
\begin{array}{lcl}
\rho_\beta \circ \alpha(x) & = &\beta \circ \rho_\beta (x)\; \text{ for any  $x\in \fg$,} \\[2mm] 
\rho_\beta([x,y]_\fg) \circ \beta & = & \rho_\beta(\alpha(x))\rho(y) + \rho_\beta(\alpha(y))\rho(x)\; \text{ for any  $x,y\in \fg$,}\\[2mm]
\rho_\beta \circ s_\fg(x) \circ \beta& =& \rho_\beta(\alpha(x))\circ \rho_\beta(x)\;\text{ for any $x\in \fg_\od$}.
\end{array}
\end{equation}
\begin{Theorem}
Let $( \fg, [\cdot,\cdot]_\fg,s_\fg, \alpha)$ be  a Hom-Lie superalgebra and $(V, [\cdot,\cdot]_V, \beta)$ be a representation. With the above notation, we define a Hom-Lie superalgebra structure on the superspace $\fg\oplus V=(\fg_\ev+V_\ev)\oplus (\fg_\od+V_\od)$, where the bracket is defined by
$$[x+v,y+w]_{\fg\oplus V}=[x,y]_\fg+[x,w]_V+[y,v]_V  \text{ for any } x,y\in \fg  \text{ and } v,w\in V,
$$
the squaring $s_{\fg+V}:\fg_\od+V_\od\rightarrow \fg_\ev+V_\ev$ is defined by
$$s_{\fg+V}(x+v)=s_\fg(x) + [x,v]_V \text{ for any } x\in \fg_\od  \text{ and } v\in V_\od,$$ 
and the structure map $\alpha_{\fg\oplus V}:\fg\oplus V\rightarrow \fg\oplus V$ defined by
$$\alpha_{\fg\oplus V}(x+v)=\alpha(x)+\beta (v)   \text{ for any } x\in \fg  \text{ and } v\in V.
$$
The Hom-Lie superalgebra $(\fg\oplus V,[\cdot,\cdot]_{\fg\oplus V},s_{\fg+V},\alpha_{\fg\oplus V})$ is called the semidirect product of $( \fg, [\cdot,\cdot]_\fg,s_\fg, \alpha)$ by the representation  $(V, [\cdot,\cdot]_V, \beta)$.
\end{Theorem}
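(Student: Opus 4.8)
The plan is to verify that the quadruple $(\fg\oplus V, [\cdot,\cdot]_{\fg\oplus V}, s_{\fg+V}, \alpha_{\fg\oplus V})$ satisfies each of the four conditions of Definition \ref{MainDef}, reducing every identity to a combination of the Hom-Lie superalgebra axioms for $\fg$ (namely \eqref{JISH}, \eqref{multiplicativity}, \eqref{multsq}) and the three representation conditions \eqref{Hom-Lie-repr}. Before checking the axioms I would record the basic compatibilities: the bracket $[\cdot,\cdot]_{\fg\oplus V}$ is symmetric because both $[\cdot,\cdot]_\fg$ and the action $[\cdot,\cdot]_V$ are, and $s_{\fg+V}$ is genuinely a squaring in the sense of \eqref{squaring} since $s_\fg(\lambda x) = \lambda^2 s_\fg(x)$ and $[\lambda x, \lambda v]_V = \lambda^2 [x,v]_V$ by bilinearity of the action. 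I would also note that $\alpha_{\fg\oplus V}$ is even, as $\alpha$ and $\beta$ are.

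First I would check that $(\fg_\ev \oplus V_\ev, [\cdot,\cdot]_{\fg\oplus V}, \alpha_{\fg\oplus V})$ is an ordinary Hom-Lie algebra (condition (i)): the multiplicativity $\alpha_{\fg\oplus V}[x+v,y+w] = [\alpha_{\fg\oplus V}(x+v), \alpha_{\fg\oplus V}(y+w)]$ follows from \eqref{multiplicativity} for $\fg$ together with the first line of \eqref{Hom-Lie-repr}, i.e. $\beta([x,w]_V) = [\alpha(x),\beta(w)]_V$; and the twisted Jacobi identity on even elements reduces to the even Jacobi identity in $\fg$ plus the second line of \eqref{Hom-Lie-repr}. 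Condition (ii), that $(\fg\oplus V)_\od$ is a module over $(\fg\oplus V)_\ev$, and condition (iii), that the polarization of $s_{\fg+V}$ recovers the odd bracket, are then direct bookkeeping: expanding $s_{\fg+V}((x+v)+(y+w)) - s_{\fg+V}(x+v) - s_{\fg+V}(y+w)$ and using bilinearity of $[\cdot,\cdot]_V$ yields $[x,y]_\fg + [x,w]_V + [y,v]_V$, exactly the odd part of the defining bracket.

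The substantive work is condition (iv). The multiplicativity \eqref{multsq} for $s_{\fg+V}$ amounts to $\alpha_{\fg\oplus V}(s_{\fg+V}(x+v)) = s_{\fg+V}(\alpha_{\fg\oplus V}(x+v))$, which expands to $\alpha(s_\fg(x)) + \beta([x,v]_V) = s_\fg(\alpha(x)) + [\alpha(x),\beta(v)]_V$, and this holds termwise by \eqref{multsq} for $\fg$ and the first line of \eqref{Hom-Lie-repr}. The main obstacle is the squaring Jacobi identity \eqref{JISH}: I must show $[s_{\fg+V}(x+v), \alpha_{\fg\oplus V}(y+w)]_{\fg\oplus V} = [\alpha_{\fg\oplus V}(x+v), [x+v, y+w]_{\fg\oplus V}]_{\fg\oplus V}$ for odd $x+v$ and arbitrary homogeneous $y+w$. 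Here I would expand both sides fully, collect the purely-$\fg$ terms (which match by \eqref{JISH} for $\fg$) and the $V$-valued terms separately. The $V$-valued part is where the third line of \eqref{Hom-Lie-repr}, $[s_\fg(x),\beta(w)]_V = [\alpha(x),[x,w]_V]_V$, is exactly engineered to clear the quadratic-in-$x$ contribution, while the cross terms involving both $v$ and $y$ are handled by the first two lines. This expansion is the computationally delicate step because the non-additivity of the squaring means I cannot simply split $s_{\fg+V}(x+v)$ linearly; I must carry the term $[x,v]_V$ through the bracket and verify that every resulting monomial cancels against a counterpart produced by the right-hand side, using characteristic $2$ to absorb signs. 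Once \eqref{JISH} is confirmed on the relevant triples, all cases of the super Jacobi identity follow, as noted in the Remark after Definition \ref{MainDef}, completing the verification.
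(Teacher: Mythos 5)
Your proposal is correct and follows essentially the same route as the paper: verify the axioms of Definition \ref{MainDef} directly, with the substantive work being the squaring condition \eqref{squaring}, the twisted Jacobi identity \eqref{JISH} (where the third line of \eqref{Hom-Lie-repr} handles the quadratic-in-$x$ term and the second line handles the cross term $[[x,y]_\fg,\beta(v)]_V$), and the multiplicativity \eqref{multsq} via the first line of \eqref{Hom-Lie-repr}. The only difference is cosmetic: the paper dismisses conditions (i) and (ii) as routine with a reference to \cite{AM}, while you sketch them explicitly.
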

\begin{proof}
Checking Axioms (i) and (ii) of Definition \ref{MainDef} is a routine; we can refer to \cite{AM}. We should check the conditions relative to the squaring. Let us first check that the map $s_{\fg}\oplus V$ is indeed a squaring. For all $x+v\in \fg_\od\oplus V_\od$ and for all $\lambda \in \mathbb{K}$, we have
\[
s_{\fg\oplus V}(\lambda (x+v))=s_\fg(\lambda x)+[\lambda x, \lambda v]_V=\lambda^2 s_\fg(x)+\lambda^2[x,v]_V=\lambda^2 s_{\fg\oplus V}(x+v).
\]
Now, for all $x+v\in \fg_\od \oplus V_\od$ and for all $y+w\in \fg\oplus V$, we have
\begin{align*}
& [s_{\fg \oplus V}(x+v),\alpha_{\fg\oplus V}(y+w)]_{\fg \oplus V} =[s_{\fg}(x)+[x,v]_V,\alpha(y)+ \beta(w)]_{\fg \oplus V} \\[2mm]& =[s_{\fg}(x),\alpha(y)]_\fg+[s_{\fg}(x),\beta(w)]_V+
[\alpha(y), [x,v]_V]_V\\[2mm]
&=[\alpha(x),[x,y]_\fg]_\fg+ [\alpha(x), [x,w]_V]_V+[\alpha(y), [x,v]_V]_V.
\end{align*}
On the other hand, 
\begin{align*}
&
[\alpha_{\fg\oplus V}(x+v), [x+v, y+w]_{\fg\oplus V}]_{\fg\oplus V}=[\alpha(x)+ \beta(v), [x,y]_\fg+[x,w]_V+[y,v]_V]_{\fg\oplus V}\\[2mm]
&=[\alpha(x), [x,y]_\fg]_\fg+ [\alpha(x), [x,w]_V+[y,v]_V]_V+[[x,y]_\fg, \beta(v)]_V\\[2mm]
& = [\alpha(x), [x,y]_\fg]_\fg+ [\alpha(x), [x,w]_V+[y,v]_V]_V+[\alpha(x), [y,v]_V]_V+ [\alpha(y), [x,v]_V]_V\\[2mm]
& = [\alpha(x), [x,y]_\fg]_\fg+ [\alpha(x), [x,w]_V]_V+ [\alpha(y), [x,v]_V]_V.
\end{align*}
Therefore, Eq. \eqref{JISH} is satisfied. Now, 
\begin{align*}
&\alpha_{\fg\oplus V}(s_{\fg\oplus V}(x+v))= \alpha_{\fg\oplus V}(s_{\fg}(x)+[x,v]_V)=\alpha(s_\fg(x))+\beta([x,v]_V)\\[2mm]
& =\alpha(s_\fg(x))+[\alpha(x),\beta(v)]_V= s_\fg(\alpha(x))+[\alpha(x),\beta(v)]_V= s_{\fg\oplus V}(\alpha(x)+\beta(v))\\[2mm]
&=s_{\fg\oplus V}(\alpha_{\fg\oplus V}(x+v)).
\end{align*}
Therefore, Eq. \eqref{multsq} is satisfied.
\end{proof}
In the following proposition, we show how to  twist a Lie superalgebra and its representation into a Hom-Lie superalgebra together with a representation in characteristic 2.

\begin{Proposition} \label{betamap}
Let  $( \fg, [\cdot,\cdot]_\fg,s_\fg)$ be a Lie superalgebra and $(V,\rho ) $ a representation.  Let $\alpha:\fg\rightarrow \fg$ be an even  superalgebra morphism and $\beta \in\fgl(V)$ be a linear map such that $ \rho(\alpha (x))\circ \beta =\beta \circ \rho(x)$.  Then  $( \fg, [\cdot,\cdot]_{\fg,\alpha},s_{\fg,\alpha}, \alpha)$, where $ [\cdot,\cdot]_{\fg,\alpha}=\alpha\circ  [\cdot,\cdot]_{\fg}$ and $s_{\fg,\alpha}=\alpha\circ s_{\fg}$, is a Hom-Lie superalgebra and $(V,\rho_\beta,\beta)$, where $\rho_\beta =\beta\circ \rho$,  is a representation.
\end{Proposition}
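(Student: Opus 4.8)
The statement splits into two independent assertions, and the first requires no new work: that $(\fg, [\cdot,\cdot]_{\fg,\alpha}, s_{\fg,\alpha}, \alpha)$ is a Hom-Lie superalgebra in characteristic 2 is exactly the content of Proposition \ref{alphamap}, since $[\cdot,\cdot]_{\fg,\alpha}=\alpha\circ[\cdot,\cdot]_\fg$ and $s_{\fg,\alpha}=\alpha\circ s_\fg$ are precisely the twisted bracket and squaring constructed there. So my plan is to devote all the effort to the second assertion, namely that $(V,\rho_\beta,\beta)$ with $\rho_\beta=\beta\circ\rho$ satisfies the three representation axioms of Definition \ref{defmodalpha}, now tested against the twisted structure maps $[\cdot,\cdot]_{\fg,\alpha}$, $s_{\fg,\alpha}$ and the twist $\alpha$.

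The uniform strategy will be to rewrite each of the three conditions \eqref{Hom-Lie-reprV2} entirely in terms of the untwisted data $\rho$, $[\cdot,\cdot]_\fg$ and $s_\fg$, substituting $\rho_\beta(z)=\beta\circ\rho(z)$ everywhere, and then to transport every occurrence of $\beta$ to the outside by means of the intertwining hypothesis $\rho(\alpha(x))\circ\beta=\beta\circ\rho(x)$. Applied with $z$ chosen as the relevant element, this identity converts each left-hand side into $\beta^2$ composed with an untwisted operator expression, and does exactly the same to each right-hand side, so that the verification reduces to an identity about $\rho$ alone.

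Carrying this out: for the first axiom $\rho_\beta(\alpha(x))\circ\beta=\beta\circ\rho_\beta(x)$, a single use of the hypothesis makes both sides equal to $\beta^2\circ\rho(x)$. For the second axiom, expanding $[x,y]_{\fg,\alpha}=\alpha([x,y]_\fg)$ and applying the hypothesis (with $z=[x,y]_\fg$) reduces the left-hand side to $\beta^2\circ\rho([x,y]_\fg)$, while the right-hand side reduces to $\beta^2\circ(\rho(x)\rho(y)+\rho(y)\rho(x))$; the two agree because $\rho$ is a representation of the Lie superalgebra, so $\rho([x,y]_\fg)=[\rho(x),\rho(y)]$, and in characteristic 2 this commutator is $\rho(x)\rho(y)+\rho(y)\rho(x)$. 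For the third (squaring) axiom, expanding $s_{\fg,\alpha}(x)=\alpha(s_\fg(x))$ and applying the hypothesis reduces the left-hand side to $\beta^2\circ\rho(s_\fg(x))$ and the right-hand side to $\beta^2\circ\rho(x)^2$; these coincide by the squaring axiom $\rho(s_\fg(x))=\rho(x)^2$ of the representation $\rho$.

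The computations are routine bookkeeping; the only genuine points of care are, first, to apply the intertwining relation at the correct internal position rather than on the outside, since it is this placement that produces the common $\beta^2$ prefactor on both sides, and second, to recognize that the third axiom is exactly where the characteristic-2 squaring condition $\rho(s_\fg(x))=\rho(x)^2$ enters, which is the one ingredient with no analogue in the purely bracket-based setting of \cite{AM}. I anticipate no real obstacle: the morphism property of $\alpha$ has already been absorbed into Proposition \ref{alphamap}, so the representation part rests only on $\rho$ being a Lie-superalgebra representation together with the single intertwining hypothesis relating $\alpha$ and $\beta$.
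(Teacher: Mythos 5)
Your proposal is correct and follows essentially the same route as the paper: invoke Proposition \ref{alphamap} for the algebra part, then push $\beta$ outward via the intertwining hypothesis $\rho(\alpha(x))\circ\beta=\beta\circ\rho(x)$ so that both sides of each representation axiom acquire a common $\beta^2$ prefactor and the verification collapses to the untwisted identities for $\rho$. The only difference is one of thoroughness: the paper declares the first two conditions ``provided by the hypothesis'' and ``straightforward'' and writes out only the squaring condition, which you also correctly identify as the step where $\rho(s_\fg(x))=\rho(x)^2$ enters.
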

\begin{proof} We have already proved in Proposition \ref{alphamap} that  $( \fg, [\cdot,\cdot]_{\fg,\alpha},s_{\fg,\alpha}, \alpha)$ is a Hom-Lie superalgebra. Let us check that $(V,\rho_\beta,\beta)$ is a representation with respect to $( \fg, [\cdot,\cdot]_{\fg,\alpha},s_{\fg,\alpha}, \alpha)$. Indeed, the first condition is provided by the hypothesis while the second and the third ones are straightforward.  Let us check the last one. For any $x\in \fg_\od$ and $v\in V$, we have 
\begin{align*}
[s_{\fg,\alpha}( x),\beta(v)]_{V,\beta}=\beta([\alpha (s_{\fg}( x)),\beta(v)]_V)=\beta^2([s_\fg(x),v]_V),
\end{align*}
and
\begin{align*}
[\alpha(x),[x,v]_{V,\beta}]_{V,\beta}=\beta[\alpha(x),\beta( [x,v]_{V}]_{V})=\beta^2([x, [x,v]_{V}]_{V}).
\end{align*} 
The equality follows from the fact that $[s_\fg(x),v]_V=[x, [x,v]_{V}]_{V}$.
\end{proof}
\begin{Example} {\em  The classification of irreducible modules over $\mathfrak{oo}_{I\Pi}^{(1)}(1|2)$ having highest weight vectors has been carried out in \cite{BGKL}. We will borrow here the simplest example. Consider the Hom-Lie superalgebra $\mathfrak{oo}_{I\Pi}^{(1)}(1|2)$ with the twist $\alpha$ given as in Example \textup{(}\ref{examoo}\text{)}. We consider the $\mathfrak{oo}_{I\Pi}^{(1)}(1|2)$-module $M$ with basis: \textup{(}even $|$ odd\textup{)}
\[
m_1, m_3 \quad | \quad m_2.
\]
The vector $m_1$ is a highest weight  vector with $\text{weight }(m_1)=(1)$. The map $\beta$ is given as follows:
\[
\beta(m_1)=\delta_1 m_1+ \delta_2 m_3, \quad \beta(m_3)=\varepsilon_1 m_1+\varepsilon_2m_3, \quad \beta(m_2)= m_2,
\]
where the coefficients $\delta_1, \delta_2, \varepsilon_1, \varepsilon_2$ are given as in Example \ref{examoo}.
}
\end{Example}
Here we will introduce another point of view concerning the representations of Hom-Lie superalgebras in characteristic 2, inspired by \cite{SX}.

Let $V=V_\ev\oplus V_\od$ be a vector superspace, and let $\beta \in GL(V)$ be even map. We will define a bracket on $\fgl(V)$ as well as a product as follows: (where $\beta^{-1}$ is the inverse of $\beta$):
\begin{eqnarray}
\label{braglV}[f,g]_{\fgl(V)} & := & \beta \circ f \circ  \beta^{-1} g \circ \beta^{-1}+  \beta \circ g \circ  \beta^{-1} f \circ \beta^{-1} \quad \text{ for all $f,g\in \fgl(V)$}, \\[1mm]
\label{sqglV} s_{\fgl(V)}(f) &: =& \beta \circ f \circ  \beta^{-1} f \circ \beta^{-1} \quad \text{ for all $f\in \fgl(V)_\od$}.
\end{eqnarray}
Obviously, $s_{\fgl(V)}(\lambda f)=\lambda^2s_{\fgl(V)}(f)$ for all $\lambda\in {\mathbb K}$ and for all $f\in \fgl(V)_\od$. Now, the map 
\[
(f,g)\mapsto s_{\fgl(V)}(f+g)+s_{\fgl(V)}(f)+s_{\fgl(V)}(g)=\beta \circ f \circ  \beta^{-1} g \circ \beta^{-1}+\beta \circ g \circ  \beta^{-1} f \circ \beta^{-1}
\]
is obviously bilinear on $\fgl(V)_\od$ as well.\\

Denote by $\mathrm{Ad}_\beta : \fgl(V ) \rightarrow \fgl(V )$ the adjoint action on $\fgl(V )$, i.e. $\mathrm{Ad}_\beta(f)=\beta \circ f\circ \beta^{-1}$.
\begin{Proposition}
The brackets and the squaring defined in \rm{Eqns}. \textup{(}\ref{braglV}\textup{)} and  \textup{(}\ref{sqglV}\textup{)} make $(\fgl(V), [\cdot,\cdot]_{\fgl (V)}, s_{\fgl(V)}, \mathrm{Ad}_{\beta})$ a Hom-Lie superalgebra in characteristic 2.
\end{Proposition}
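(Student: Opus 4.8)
The plan is to trace every axiom back to the single bilinear \emph{twisted product}
$\mu(f,g):=\beta\circ f\circ\beta^{-1}\circ g\circ\beta^{-1}$ on $\fgl(V)$, in terms of which the data \eqref{braglV}--\eqref{sqglV} read $[f,g]_{\fgl(V)}=\mu(f,g)+\mu(g,f)$ and $s_{\fgl(V)}(f)=\mu(f,f)$. Since $\beta$ is even and invertible, $\mathrm{Ad}_\beta$ is an even element of $\mathrm{End}(\fgl(V))$ and $\mu$ is graded, so condition (iii) of Definition \ref{MainDef} is free: bilinearity of $\mu$ shows at once that $(f,g)\mapsto s_{\fgl(V)}(f+g)+s_{\fgl(V)}(f)+s_{\fgl(V)}(g)=\mu(f,g)+\mu(g,f)$ is bilinear and equals the bracket, as already observed after \eqref{sqglV}. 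I would then record the two structural identities that drive the whole proof. Setting $f\star g:=f\circ\beta^{-1}\circ g$, the product $\star$ is genuinely associative, so $\mu(f,g)=\beta\circ(f\star g)\circ\beta^{-1}$ is \emph{Hom-associative} with respect to $\mathrm{Ad}_\beta$, namely $\mu(\mathrm{Ad}_\beta f,\mu(g,h))=\mu(\mu(f,g),\mathrm{Ad}_\beta h)$, both sides equalling $\beta^2\circ f\circ\beta^{-1}\circ g\circ\beta^{-1}\circ h\circ\beta^{-2}$; moreover $\mathrm{Ad}_\beta$ is multiplicative for $\mu$, i.e. $\mathrm{Ad}_\beta\,\mu(f,g)=\mu(\mathrm{Ad}_\beta f,\mathrm{Ad}_\beta g)$. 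Both are one-line checks, the only content being the bookkeeping of the $\beta^{\pm 1}$ factors.

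From $\mu$-multiplicativity of $\mathrm{Ad}_\beta$, conditions \eqref{multiplicativity} and \eqref{multsq} follow immediately by applying $\mathrm{Ad}_\beta$ to $[f,g]_{\fgl(V)}=\mu(f,g)+\mu(g,f)$ and to $s_{\fgl(V)}(f)=\mu(f,f)$. The heart of the argument is the squaring Jacobi identity \eqref{JISH}, $[s_{\fgl(V)}(f),\mathrm{Ad}_\beta g]=[\mathrm{Ad}_\beta f,[f,g]]$ for odd $f$. I would expand both sides purely in $\mu$ and $\mathrm{Ad}_\beta$ and collapse them using Hom-associativity. The left-hand side is $\mu(s(f),\mathrm{Ad}_\beta g)+\mu(\mathrm{Ad}_\beta g,s(f))$; writing $s(f)=\mu(f,f)$ and applying Hom-associativity (with the arguments $f,f,g$ and $g,f,f$) turns it into $\mu(\mathrm{Ad}_\beta f,\mu(f,g))+\mu(\mu(g,f),\mathrm{Ad}_\beta f)$. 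The right-hand side expands into four monomials
\[
\mu(\mathrm{Ad}_\beta f,\mu(f,g))+\mu(\mathrm{Ad}_\beta f,\mu(g,f))+\mu(\mu(f,g),\mathrm{Ad}_\beta f)+\mu(\mu(g,f),\mathrm{Ad}_\beta f),
\]
and Hom-associativity with arguments $f,g,f$ shows the two middle ``mixed'' terms are equal, both being $\beta^2\circ f\circ\beta^{-1}\circ g\circ\beta^{-1}\circ f\circ\beta^{-2}$; hence they cancel in characteristic $2$, leaving exactly the two surviving terms of the left-hand side. This char-$2$ cancellation of the cross terms is the crux, and is precisely where the squaring (rather than a general even element) is essential.

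Finally, conditions (i) and (ii): the even part must be an ordinary Hom-Lie algebra and $\fg_\od$ a two-sided $\fg_\ev$-module. Here $[f,f]_{\fgl(V)}=2\mu(f,f)=0$ is automatic, multiplicativity is already established, and the remaining Hom-Jacobi identities — the all-even $\{\fg_\ev,\fg_\ev,\fg_\ev\}$ case (condition (i)) and the mixed $\{\fg_\ev,\fg_\ev,\fg_\od\}$ case (condition (ii)) — are the standard consequence of multiplicative Hom-associativity of $\mu$ under the commutator, proved by the same expansion-and-cancellation as for \eqref{JISH} (no signs ever intervene). The two remaining Jacobi types $\{\fg_\ev,\fg_\od,\fg_\od\}$ and $\{\fg_\od,\fg_\od,\fg_\od\}$ need not be checked separately, as they follow from \eqref{JISH} by the Remark after Definition \ref{MainDef}. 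Thus the only genuine obstacle is the careful $\beta^{\pm1}$ bookkeeping inside \eqref{JISH} together with the characteristic-$2$ cancellation; everything else is formal.
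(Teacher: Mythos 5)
Your proof is correct, and at bottom it performs the same $\beta^{\pm1}$-bookkeeping as the paper's: both arguments reduce each side of \eqref{JISH} to the two monomials $\beta^2\circ f\circ\beta^{-1}\circ f\circ\beta^{-1}\circ g\circ\beta^{-2}$ and $\beta^2\circ g\circ\beta^{-1}\circ f\circ\beta^{-1}\circ f\circ\beta^{-2}$, with the two mixed terms $\beta^2\circ f\circ\beta^{-1}\circ g\circ\beta^{-1}\circ f\circ\beta^{-2}$ cancelling in characteristic $2$. The difference is organizational: the paper expands the compositions directly and verifies multiplicativity and the squaring Jacobi identity by brute force, whereas you factor everything through the single product $\mu(f,g)=\beta\circ f\circ\beta^{-1}\circ g\circ\beta^{-1}$ and isolate two reusable lemmas (Hom-associativity of $\mu$ relative to $\mathrm{Ad}_\beta$, and $\mu$-multiplicativity of $\mathrm{Ad}_\beta$) from which all axioms follow formally. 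This buys a cleaner proof and, implicitly, a more general statement --- any multiplicative Hom-associative superalgebra in characteristic $2$ yields a Hom-Lie superalgebra via the commutator and the squaring $x\mapsto x\star x$, which is the Hom-analogue of the Remark on associative superalgebras in Section~2 --- at the cost of introducing machinery the paper avoids. You also explicitly discharge conditions (i) and (ii) and invoke the Remark after Definition~\ref{MainDef} for the remaining Jacobi types, which the paper leaves implicit; that is a welcome addition rather than a divergence.
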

\begin{proof} The map $\mathrm{Ad}_{\beta}$ is invertible with inverse $\mathrm{Ad}_{\beta^{-1}}$. Let us check the multiplicativity conditions:
\begin{align*}
& [\mathrm{Ad}_\beta(f),\mathrm{Ad}_\beta(g) ]_{\fgl(V)}=[\beta \circ f\circ \beta^{-1},\beta \circ g \circ \beta^{-1} ]_{\fgl(V)}\\[2mm]
&=\beta \circ (\beta \circ f\circ \beta^{-1}) \circ  \beta^{-1} (\beta \circ g\circ \beta^{-1}) \circ \beta^{-1}+  \beta \circ (\beta \circ g\circ \beta^{-1}) \circ  \beta^{-1} (\beta \circ f\circ \beta^{-1}) \circ \beta^{-1} \\[2mm]
&= \beta \circ (\beta \circ f\circ \beta^{-1} \circ g\circ \beta^{-1}) \circ \beta^{-1}+  \beta \circ (\beta \circ g\circ \beta^{-1}  \circ f\circ \beta^{-1}) \circ \beta^{-1} \\[2mm]
&= \mathrm{Ad}_\beta([f,g]_{\fgl(V)}).
\end{align*}
Similarly, 
\begin{align*}
&s_{\fgl(V)}(\mathrm{Ad}_\beta(f))=s_{\fgl(V)}(\beta \circ f \circ \beta^{-1})=\beta \circ (\beta \circ f \circ \beta^{-1})\circ  \beta^{-1} \circ (\beta \circ f \circ \beta^{-1}) \circ \beta^{-1}\\[2mm]
&=\beta \circ (\beta \circ f \circ \beta^{-1} \circ f \circ \beta^{-1}) \circ \beta^{-1}= \mathrm{Ad}_\beta(s_{\fgl(V)}).
\end{align*}
For the Jacobi identity, let us just deal with the squaring. The LHS of the JI reads (for all $f\in \fgl(V )_\od$ and for all $g\in \fgl(V )$)
\[
\begin{array}{lcl}
[s_{\fgl(V )}(f),\mathrm{Ad}_\beta(g)]_{\fgl(V )}&=&\beta \circ s_{\fgl(V )}(f) \circ  \beta^{-1} \circ \beta\circ g \circ \beta^{-1} \circ \beta^{-1}+ \\[2mm]
&&+\beta \circ \beta\circ g \circ \beta^{-1} \circ  \beta^{-1} \circ s_{\fgl(V )}(f) \circ \beta^{-1} \\[2mm]
& =& \beta^2 \circ ( f \circ  \beta^{-1} \circ f \circ \beta^{-1} \circ  g + g \circ  \beta^{-1} \circ f \circ  \beta^{-1} \circ  f ) \circ \beta^{-2}.
\end{array}
\]
While the RHS reads
\[
\begin{array}{lcl}
[\mathrm{Ad}_\beta(f),[f,g]_{\fgl(V )}]_{\fgl(V )}&=&  \beta^2 \circ f \circ \beta^{-2} \circ  [f,g]_{\fgl(V )} \circ \beta^{-1}+  \beta \circ [f,g]_{\fgl(V )} \circ f \circ \beta^{-1} \circ \beta^{-1}\\[2mm]
&=&  \beta^2 \circ f \circ  \beta^{-2} \circ  (\beta \circ f \circ  \beta^{-1} g \circ \beta^{-1}+  \beta \circ g \circ  \beta^{-1} f \circ \beta^{-1} ) \circ \beta^{-1}\\[2mm]
&&+  \beta \circ (\beta \circ f \circ  \beta^{-1} g \circ \beta^{-1}+\beta \circ g \circ  \beta^{-1} f \circ \beta^{-1} ) \circ f \circ \beta^{-2}\\[2mm]
&=& \beta^2 \circ ( f \circ  \beta^{-1} \circ f \circ \beta^{-1} \circ  g + g \circ  \beta^{-1} \circ f \circ  \beta^{-1} \circ  f ) \circ \beta^{-2}.\qedhere
\end{array}
\]\end{proof}
\begin{Theorem}
Let $(\fg, [\cdot,\cdot]_\fg, s_\fg, \alpha)$ be a Hom-Lie superalgebra in characteristic 2. Let $V$ be a vector superspace, and let $\beta\in GL(V)$ be even. Then, the map $\rho_\beta:\fg\rightarrow \fgl(V)$ is a representation of $(\fg, [\cdot,\cdot]_\fg, s_\fg, \alpha)$ on $V$ with respect to $\beta$ if and only if the map $\rho_\beta: (\fg, [\cdot,\cdot]_\fg, s_\fg, \alpha)
 \rightarrow (\fgl(V), [\cdot, \cdot]_{\fgl (V)},s_{\fgl(V)}, \mathrm{Ad}_{\beta})$ is a morphism of Hom-Lie superalgebras.\end{Theorem}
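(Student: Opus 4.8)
The plan is to unwind both the representation axioms and the morphism axioms into operator identities on $\fgl(V)$ and match them term by term, exploiting the invertibility of $\beta$ so that every implication runs in both directions. Writing $\rho_\beta(x)=[x,\cdot]_V$, the three defining conditions of a representation in \eqref{Hom-Lie-repr} read, as equalities of maps in $\fgl(V)$,
\begin{align*}
&\text{(R1)}\quad \rho_\beta(\alpha(x))\circ\beta=\beta\circ\rho_\beta(x),\\
&\text{(R2)}\quad \rho_\beta([x,y]_\fg)\circ\beta=\rho_\beta(\alpha(x))\circ\rho_\beta(y)+\rho_\beta(\alpha(y))\circ\rho_\beta(x),\\
&\text{(R3)}\quad \rho_\beta(s_\fg(x))\circ\beta=\rho_\beta(\alpha(x))\circ\rho_\beta(x),
\end{align*}
for $x,y\in\fg$ (resp. $x\in\fg_\od$). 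On the other side, the conditions \eqref{homphialpha} for $\rho_\beta$ to be a morphism into $(\fgl(V),[\cdot,\cdot]_{\fgl(V)},s_{\fgl(V)},\mathrm{Ad}_\beta)$ are $\rho_\beta([x,y]_\fg)=[\rho_\beta(x),\rho_\beta(y)]_{\fgl(V)}$, $\rho_\beta(s_\fg(x))=s_{\fgl(V)}(\rho_\beta(x))$, and $\rho_\beta\circ\alpha=\mathrm{Ad}_\beta\circ\rho_\beta$.

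First I would dispatch the compatibility with the structure maps. Since $\mathrm{Ad}_\beta(\rho_\beta(x))=\beta\circ\rho_\beta(x)\circ\beta^{-1}$, the identity $\rho_\beta(\alpha(x))=\mathrm{Ad}_\beta(\rho_\beta(x))$ is obtained from (R1) by composing on the right with $\beta^{-1}$, and conversely (R1) follows by composing with $\beta$; because $\beta$ is invertible this is an equivalence. Thus the $\mathrm{Ad}_\beta$-compatibility condition is equivalent to (R1), with no reference to the remaining data.

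Next, assuming (R1), I would substitute $\beta\circ\rho_\beta(z)\circ\beta^{-1}=\rho_\beta(\alpha(z))$ directly into the definitions \eqref{braglV} and \eqref{sqglV}. For the bracket this gives
\[
[\rho_\beta(x),\rho_\beta(y)]_{\fgl(V)}=\big(\rho_\beta(\alpha(x))\circ\rho_\beta(y)+\rho_\beta(\alpha(y))\circ\rho_\beta(x)\big)\circ\beta^{-1},
\]
so that $\rho_\beta([x,y]_\fg)=[\rho_\beta(x),\rho_\beta(y)]_{\fgl(V)}$ becomes, after composing with $\beta$ on the right, exactly (R2); likewise $s_{\fgl(V)}(\rho_\beta(x))=\rho_\beta(\alpha(x))\circ\rho_\beta(x)\circ\beta^{-1}$ turns the squaring-morphism condition into (R3). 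Invertibility of $\beta$ again makes each step reversible, so modulo (R1) the bracket- and squaring-morphism conditions are equivalent to (R2) and (R3) respectively. Collecting the three equivalences yields the theorem.

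The only point requiring care — and the natural main obstacle — is the logical ordering: the equivalence of the bracket (resp. squaring) condition with (R2) (resp. (R3)) relies on (R1) to rewrite the conjugation $\beta\circ(-)\circ\beta^{-1}$ as $\rho_\beta\circ\alpha$. I would therefore establish the $\mathrm{Ad}_\beta$-compatibility $\Leftrightarrow$ (R1) first and keep it available throughout, rather than treating the three conditions as independent. One should also note that the polarization of $s_{\fgl(V)}$ recovers $[\cdot,\cdot]_{\fgl(V)}$ on the odd part, as recorded just before the statement, so that the squaring genuinely encodes the odd bracket and (R3) is the correct counterpart of the squaring-morphism axiom.
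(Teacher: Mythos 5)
Your proposal is correct and follows essentially the same route as the paper: use the invertibility of $\beta$ to identify condition (R1) with $\rho_\beta\circ\alpha=\mathrm{Ad}_\beta\circ\rho_\beta$, then substitute $\beta\circ\rho_\beta(\cdot)\circ\beta^{-1}=\rho_\beta(\alpha(\cdot))$ into the definitions of $[\cdot,\cdot]_{\fgl(V)}$ and $s_{\fgl(V)}$ to match (R2) and (R3). The paper only writes out the forward direction, so your explicit observation that each step is reversible (and that the bracket/squaring equivalences are conditional on (R1)) is a welcome completion rather than a deviation.
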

 \begin{proof}
 Let us only proof one direction. Suppose that $\rho_\beta:\fg\rightarrow \fgl(V)$ is a representation of  $(\fg, [\cdot,\cdot]_\fg, s_\fg, \alpha)$ on $V$ with respect to $\beta$. Since $\rho_\beta(\alpha(x))\circ \beta=\beta \circ \rho(x)$, for all $f\in \fg$, it follows that 
 \[
 \rho_\beta (x)\circ \alpha= \beta \circ \rho(x)\circ \beta^{-1}= \mathrm{Ad}_\beta \circ \rho_\beta(x).
 \]
 Now, 
 \[
 \begin{array}{lcl}
     \rho_\beta([x,y]_\fg) & = &  \rho_\beta(\alpha(x)) \circ \rho(y) \circ  \beta^{-1}+ \rho_\beta(\alpha(y))  \circ \rho(x) \circ \beta^{-1} \\[2mm]
      &  = & \rho_\beta(\alpha(x)) \circ \beta \circ \beta^{-1} \circ \rho_\beta(y) \circ  \beta^{-1}+ \rho_\beta(\alpha(y)) \circ \beta \circ \beta^{-1} \circ \rho_\beta(x) \circ \beta^{-1} \\[2mm]
       &  = & \beta \circ \rho_\beta(x) \circ \beta^{-1} \circ \rho_\beta(y) \circ  \beta^{-1}+ \beta\circ  \rho_\beta(y) \circ \beta^{-1} \circ \rho_\beta(x) \circ \beta^{-1} \\[2mm]
       & = & [\rho_\beta(x), \rho_\beta(y)]_{\fgl(V)}
 \end{array}
 \]
 For the squaring, we have
 \[
 \begin{array}{lcl}
 \rho_\beta(s_\fg(x)) & = & \rho_\beta(\alpha(x)) \circ \rho_\beta(x)\circ \beta^{-1}\\[2mm]
 & = & \beta \circ \rho_\beta(x) \circ \beta^{-1} \circ \rho_\beta(x) \circ \beta^{-1}\\[2mm]
 &= & s_{\fgl(V)} ( \rho_\beta(x)).
 \end{array}
 \]
 It follows that $\rho_\beta$ is a homomorphism of Hom-Lie superalgebras in characteristic 2.
 \end{proof}
 \begin{Corollary}
 Let $(\fg, [\cdot, \cdot]_\fg, s_\fg, \alpha)$ be a Hom-Lie superalgebra in characteristic 2. Then, the adjoint representation $\mathrm{ad} : \fg \rightarrow \fgl(\fg)$, which is defined by $\mathrm{ad}_x(y) = [x, y]_\fg$, is a morphism from $(\fg, [\cdot, \cdot]_\fg, \alpha)$ to $(\fgl(\fg), [\cdot, \cdot]_{\fgl(\fg)},s_{\fgl(\fg)}, \mathrm{Ad}_\alpha)$.
 \end{Corollary}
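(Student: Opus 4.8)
The plan is to deduce this directly from the preceding Theorem by specializing the module to $V=\fg$ itself, taking $\beta=\alpha$ and letting the action $[\cdot,\cdot]_V$ be the bracket of $\fg$, so that $\rho_\alpha=\mathrm{ad}$ with $\mathrm{ad}_x(y)=[x,y]_\fg$. (Here we use that $\alpha\in GL(\fg)$, as is needed for $\mathrm{Ad}_\alpha$ and the bracket $[\cdot,\cdot]_{\fgl(\fg)}$ of \eqref{braglV} to be defined.) By that Theorem, it then suffices to verify that $(\fg,[\cdot,\cdot]_\fg,\alpha)$ is a representation of itself in the sense of Definition \ref{defmodalpha}; the conclusion that $\mathrm{ad}$ is a morphism of Hom-Lie superalgebras follows immediately.

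So I would check the three conditions of \eqref{Hom-Lie-repr} with $V=\fg$, $\beta=\alpha$, and $[\cdot,\cdot]_V=[\cdot,\cdot]_\fg$. Each of them is nothing but one of the defining axioms of a Hom-Lie superalgebra read off for the adjoint action. The first condition, $[\alpha(x),\alpha(y)]_\fg=\alpha([x,y]_\fg)$, is precisely the multiplicativity \eqref{multiplicativity}. The third condition, $[s_\fg(x),\alpha(y)]_\fg=[\alpha(x),[x,y]_\fg]_\fg$ for $x\in\fg_\od$ and $y\in\fg$, is exactly the squaring Jacobi identity \eqref{JISH}.

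It remains to recognize the second (Hom-Jacobi) condition. Written out for the adjoint action it reads $[[x,y]_\fg,\alpha(z)]_\fg=[\alpha(x),[y,z]_\fg]_\fg+[\alpha(y),[x,z]_\fg]_\fg$. Using the symmetry of the bracket in characteristic $2$ (so that $[[x,y],\alpha(z)]=[\alpha(z),[x,y]]$ and $[x,z]=[z,x]$), this is equivalent to the vanishing of the cyclic sum $[\alpha(x),[y,z]]+[\alpha(y),[z,x]]+[\alpha(z),[x,y]]=0$, which is the Hom-Jacobi identity of the Hom-Lie algebra axiom on the even part and, for triples involving odd elements, follows from \eqref{JISH} by the Remark following the main definition. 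This exhausts all the conditions.

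There is essentially no obstacle here: the content is simply the observation that the three representation axioms of Definition \ref{defmodalpha} are literally the three structural identities \eqref{multiplicativity}, the Hom-Jacobi identity, and \eqref{JISH} of the Hom-Lie superalgebra, once the module is taken to be $\fg$ acting on itself. The only point requiring a moment's care is the bookkeeping in the Hom-Jacobi step, where one must use the symmetry of the bracket to match the two-term form of the representation condition with the cyclic form of the Jacobi identity.
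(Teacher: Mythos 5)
Your proof is correct and follows exactly the route the paper intends: the corollary is stated as an immediate consequence of the preceding theorem, applied with $V=\fg$, $\beta=\alpha$, and the adjoint action, and your verification that the three conditions of Definition \ref{defmodalpha} reduce to multiplicativity \eqref{multiplicativity}, the Hom-Jacobi identity, and \eqref{JISH} is precisely the content left implicit there. Your parenthetical remark that $\alpha$ must be invertible for $\mathrm{Ad}_\alpha$ and $[\cdot,\cdot]_{\fgl(\fg)}$ to make sense is a hypothesis the paper's corollary leaves tacit, and flagging it is a point in your favor.
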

\section{$\alpha^k$-Derivations}
Let  $( \fg, [\cdot,\cdot],s, \alpha)$ be a Hom-Lie superalgebra in characteristic 2. We denote by $\alpha^k$ the $k$-times  composition of $\alpha$,  where $\alpha^0$ is the identity map. We will be needing the following linear map 
\begin{equation} \label{adalpha}
\ad_{\alpha^{s,k}}(x): y \mapsto [\alpha^s(x), \alpha^k(y)].  
\end{equation}

\begin{Definition}
{\rm A linear map $D:\fg\rightarrow \fg$ is called an \textit{$\alpha^k$-derivation} of the Hom-Lie superalgebra $\fg$ if
\begin{eqnarray}
\label{Der0H} D\circ \alpha& = & \alpha\circ D, \text{ namely $D$ and $\alpha$ commutes.}\\[2mm]
\label{Der1H} D([x,y])&=&[D(x),\alpha^k(y)]+[\alpha^k(x),D(y)]\quad \text{for any $x\in \fg_\ev$ and $y\in \fg$ }.\\[2mm]
\label{Der2H} D(s(x))&=&[D(x),\alpha^k(x)]\quad \text{for any $x\in \fg_\od$}.
\end{eqnarray}
}
\end{Definition}

\begin{Remark}{\rm 
Notice that condition (\ref{Der2H}) implies condition (\ref{Der1H}) if $x,y\in \fg_\od$. }
\end{Remark}
Let us give an example. Let $x\in \fg$ such that $\alpha(x)=x$. The linear map $\ad_{\alpha^{0,k} }(x): y \mapsto [x, \alpha^k(y)]$ (see Eq. (\ref{adalpha})) is an $\alpha^k$-derivation. Let us just check the condition related to the squaring. Indeed,
\[ 
\ad_{\alpha^{0,k}}(x)(s(y))=[x,\alpha^k(s(y))] = [x,s(\alpha^k(y))] = [[x,\alpha^k(y)],\alpha^k(y)] = [\ad_{\alpha^{0,k}}(x)(y),\alpha^k(y)]
\]
Let us denote the space of $\alpha^k$-derivations by $\fder^{\alpha}(\fg)$. We have the following proposition.
\begin{Proposition}
The space $\fder^{\alpha}(\fg)$ can be endowed with a Lie superalgebra structure in characteristic 2. The bracket is the usual commutator, and the squaring is given by
\[
s_{\fder^{\alpha}(\fg)}(D):=D^2 \quad \text{ for all $D\in \fder^{\alpha}_\od(\fg)$.}
\]
\end{Proposition}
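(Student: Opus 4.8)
The plan is to show that $\fder^\alpha(\fg)$ is closed under the commutator bracket and under the squaring $D\mapsto D^2$, and then verify the super Jacobi identity \eqref{JIS} for this squaring. Since the ambient bracket is the ordinary operator commutator $[D,E]=D\circ E+E\circ D$ (recall we are in characteristic $2$, so signs vanish), the even part automatically forms an ordinary Lie algebra, and the only genuine content lies in the three conditions \eqref{Der0H}, \eqref{Der1H}, \eqref{Der2H} being preserved. First I would verify closure under the commutator: given $D,E\in\fder^\alpha(\fg)$, condition \eqref{Der0H} for $[D,E]$ is immediate since both $D$ and $E$ commute with $\alpha$, hence so does $D\circ E+E\circ D$. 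For condition \eqref{Der1H} one expands $[D,E]([x,y])=D(E([x,y]))+E(D([x,y]))$, applies the $\alpha^k$-derivation rule to each inner term, and uses that $D,E$ commute with $\alpha$ (so $D(\alpha^k(z))=\alpha^k(D(z))$) to collect terms; the four cross-terms of the form $[D(x),\alpha^k E(y)]$ and $[E(x),\alpha^k D(y)]$ cancel in pairs because each appears twice in characteristic $2$, leaving exactly $[[D,E](x),\alpha^k(y)]+[\alpha^k(x),[D,E](y)]$.

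Next I would establish closure under the squaring. For $D\in\fder^\alpha_\od(\fg)$ I must check that $D^2$ is itself an $\alpha^k$-derivation. Condition \eqref{Der0H} is clear, since $D^2\circ\alpha=D\circ\alpha\circ D=\alpha\circ D^2$. The main point is condition \eqref{Der2H}: I would compute $D^2(s(x))=D([D(x),\alpha^k(x)])$ for odd $x$. Here $D(x)$ is even, so I apply the \textit{even} derivation rule \eqref{Der1H} (which is legitimate since $D(x)\in\fg_\ev$), giving $D^2(s(x))=[D^2(x),\alpha^k D(x)]+[\alpha^k D(x),D(\alpha^k(x))]$; using $D\circ\alpha=\alpha\circ D$ the middle cross-term becomes $[\alpha^k D(x),\alpha^k D(x)]$, which vanishes by symmetry of the bracket (an element bracketed with itself is zero, as $[z,z]=0$ over a field of characteristic $2$). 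This leaves $D^2(s(x))=[D^2(x),\alpha^k(x)]$, which is precisely \eqref{Der2H} for $D^2$.

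The remaining and most delicate step is the super Jacobi identity \eqref{JIS}: for $D\in\fder^\alpha_\od(\fg)$ and arbitrary $E\in\fder^\alpha(\fg)$ one needs $[s(D),E]=[D,[D,E]]$, i.e. $[D^2,E]=[D,[D,E]]$ as operators. Unwinding both sides with the commutator bracket, the left side is $D^2 E+E D^2$ and the right side is $D(DE+ED)+(DE+ED)D=D^2E+DED+DED+ED^2=D^2E+ED^2$ (the two $DED$ terms cancel in characteristic $2$); hence both sides agree identically. I expect the main obstacle to be purely bookkeeping: ensuring that the odd/even parity assumptions are correctly invoked when switching between \eqref{Der1H} and \eqref{Der2H}, and tracking which cross-terms cancel as repeated pairs versus which vanish as self-brackets. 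Because all the algebraic identities reduce to the associative-algebra computation inside $\mathrm{End}(\fg)$ combined with the defining derivation conditions, no genuinely hard estimate arises; the proof is a direct verification of the three axioms for $[D,E]$ and $D^2$, followed by the one-line operator identity for \eqref{JIS}.
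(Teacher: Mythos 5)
Your overall route is the paper's: show closure of $\fder^{\alpha}(\fg)$ under the commutator and under $D\mapsto D^2$ by applying the derivation axioms and killing the self-bracket, then reduce the super Jacobi identity for the squaring to the operator identity $D^2E+ED^2=D(DE+ED)+(DE+ED)D$. That last computation and the observation that $[\alpha^k(D(x)),\alpha^k(D(x))]=0$ are exactly what the paper does.

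However, your verification of \eqref{Der2H} for $D^2$ contains a concrete error in the exponent of $\alpha$. Applying \eqref{Der1H} to $D\bigl([D(x),\alpha^k(x)]\bigr)$ twists the \emph{second slot} by $\alpha^k$, i.e.
\[
D^2(s(x))=[D^2(x),\alpha^k(\alpha^k(x))]+[\alpha^k(D(x)),D(\alpha^k(x))]=[D^2(x),\alpha^{2k}(x)],
\]
not $[D^2(x),\alpha^k(x)]$ as you assert (and your intermediate term $[D^2(x),\alpha^k D(x)]$ is not what the derivation rule produces). Hence $D^2$ is an $\alpha^{2k}$-derivation, not an $\alpha^k$-derivation; likewise your bracket expansion actually yields $[[D,E](x),\alpha^{2k}(y)]+[\alpha^{2k}(x),[D,E](y)]$, so $[D,E]$ lands in $\fder^{\alpha}_{k+l}(\fg)$ when $D\in\fder^{\alpha}_{k}(\fg)$ and $E\in\fder^{\alpha}_{l}(\fg)$. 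This does not sink the proposition, because $\fder^{\alpha}(\fg)=\oplus_k\fder^{\alpha}_{k}(\fg)$ is the sum over all $k$ and is therefore still closed under both operations --- this is precisely the grading $[\fder^{\alpha}_{k}(\fg),\fder^{\alpha}_{l}(\fg)]\subseteq\fder^{\alpha}_{k+l}(\fg)$ and $s\bigl((\fder^{\alpha}_{k}(\fg))_\od\bigr)\subseteq\fder^{\alpha}_{2k}(\fg)$ recorded in the paper right after the proposition --- but as written your claim that $D^2$ satisfies \eqref{Der2H} ``with the same $k$'' is false and must be corrected before the closure argument is complete.
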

\begin{proof}
As we did before, we only prove the requirements when the squaring is involved. Let us first show that $D^2$ is an $\alpha^{2k}$-derivation. Checking the bracket is a routine. For the squaring, we have (for all $x\in \fg_\od$):
\begin{align*}
& D^2(s_\fg(x))=D([D(x), \alpha^k(x)]_\fg)=[D^2(x), \alpha^k(\alpha^k(x))]_\fg+ [\alpha^k(D(x)), D(\alpha^k(x))]_\fg\\[2mm]
&=[D^2(x), \alpha^{2k}(x)]_\fg+ [\alpha^k(D(x)), \alpha^k(D(x))]_\fg=[D^2(x), \alpha^{2k}(x)]_\fg.
\end{align*}
Before we proceed with the proof, let us re-denote  the space $\fder^{\alpha}(\fg)$ by $\mathfrak h$ for simplicity.

Now, for all $D\in {\mathfrak h}_\od$ and for all $E\in {\mathfrak h}_\od$, we have (for all $x\in \fg$):
\[
[s_{\mathfrak h}(D), E]_{\mathfrak h}(x)=[D^2, E]_{\mathfrak h}(x)=D^2\circ E(x)+ E \circ D^2 (x).
\]
On the other hand,
\begin{align*}
&[D,[D,E]_{\mathfrak h}]_{\mathfrak h}(x)=[D, D\circ E+E\circ D]_{\mathfrak h}(x) \\[2mm]
& =D \circ ( D\circ E+E\circ D)(x)+  (D\circ E+E\circ D)\circ D(x)\\[2mm]
&=D^2\circ E(x)+ E \circ D^2 (x).
\end{align*}
Therefore, $[s_{\mathfrak h}(D), E]_{\mathfrak h}=[D,[D,E]_{\mathfrak h}]_{\mathfrak h}$.\end{proof}
The space $\fder^{\alpha}(\fg)$ is actually graded as $\fder^{\alpha}(\fg)=\oplus \fder^{\alpha}_k(\fg)$ where $\fder^{\alpha}_k(\fg)$ is the space of $\alpha^k$-derivations where $k$ is fixed. Indeed, we have 
\[
[\fder^{\alpha}_k(\fg),\fder^{\alpha}_l(\fg)] \subseteq \fder^{\alpha}_{k+l}(\fg)\quad \text{ and }\quad  s(\fder^{\alpha}_k(\fg)_\od)\subseteq \fder^{\alpha}_{2k}(\fg).
\]
\begin{Example} {\rm We will describe all $\alpha^k$-derivations of the Hom-Lie superalgebra ${\mathfrak o}{\mathfrak o}_{I\Pi}^{(1)}(1|2)_\alpha$ introduced in Example \ref{examoo}. First, observe that 
$$
\alpha^{2k}=\alpha^0=\mathrm{Id}, \quad \alpha^{2k+1}=\alpha, \quad \text{for all $k\geq 0$}.
$$
The case of $\alpha^0$-derivations: 
\[
\begin{array}{llcl}
\text{(Even)} & D_1^0 & = & h_1\otimes y_2^*+x_1\otimes y_1^*,\\[2mm]
\text{(Even)} & D_2^0 & = & x_1\otimes x_1^*+y_1\otimes y_1^*,\\[2mm]
\text{(Odd)} & D_3^0 & = & x_1\otimes h^*_1+h_1\otimes y_1^*+y_1\otimes y_2^*.
\end{array}
\]
The case of $\alpha$-derivations:
\[
\begin{array}{llcl}
\text{(Even)} & D_1^1& =& h_1\otimes y_2^*+x_1\otimes y_1^*,\\[2mm]
   \text{(Even)} & D_2^1& =&\epsilon \,
   x_1\otimes y_1^*+x_1\otimes x_1^*+y_1\otimes y_1^*,\\[2mm]
\text{(Odd)} & D_3^1& =& \epsilon \,
   x_1\otimes y_2^*+x_1\otimes h_1^*+h_1\otimes y_1^*+y_1\otimes y_2^*.
\end{array}
\]
}
\end{Example}

\section{$p$-structures and queerification of Hom-Lie algebras in characteristic 2}
We will first introduce the concept of $p$-structures on Hom-Lie algebras. In the case of Lie algebras, the definition is due to Jacobson \cite{J}. Roughly speaking, one requires the existence of an endomorphism on the modular
Lie algebra that resembles the pth power mapping $x \mapsto  x^p$ in associative algebras. In the case of Hom-Lie algebra, there is a definition proposed in \cite{GC} but it turns out that this definition is not appropriate to queerify a restricted Hom-Lie algebras in characteristic two, as done in \cite{BLLSq} in the case of ordinary restricted Lie algebras. Here, we will give an alternative definition and justify the construction. 
\begin{Definition} {\rm Let $\fg$ be a Hom-Lie algebra in characteristic $p$ with a twist $\alpha$. A mapping $[p]_\alpha : \fg \rightarrow \fg,\; a \mapsto a^{[p]_\alpha}$ is called a $p$-structure of
$\fg$ and $\fg$ is said to be restricted if
\begin{enumerate}
    \item[\textup{(}R1\textup{)}] $\ad(x^{[p]_\alpha}) \circ \alpha^{p-1} = \ad (\alpha^{p-1}(x)) \circ \ad(\alpha^{p-2}(x))\circ \cdots \circ \ad(x)$ for all $x \in \fg$;\\[2mm]
    \item[\textup{(}R2\textup{)}] $(\lambda x)^{[p]_\alpha} = \lambda^p x^{[p]_\alpha}$ for all $x \in  \fg$ and for all  $\lambda \in \mathbb{K}$;\\[2mm]
    \item[\textup{(}R3\textup{)}] $(x + y)^{[p]_\alpha} = x^{[p]_\alpha} + y^{[p]_\alpha}+ \displaystyle \sum_{1\leq i \leq p -1} s_i (x, y)$, where the $s_i(x,y)$ can be obtained from 
    \[
    \ad({\alpha^{p-2}}(\lambda x+y))\circ\ad({\alpha^{p-3}}(\lambda x+y)) \circ \cdots \circ \ad (\lambda x+y)(x)= \sum_{1\leq i \leq p-1} i s_i(x,y) \lambda^{i-1}.
    \]
\end{enumerate}
}
\end{Definition}
Let us exhibit this $p$-structure in the case where $p=2$. The conditions (R2) and (R3) read, respectively, as 
\[
[x^{[2]_\alpha}, \alpha(y)]= [\alpha(x), [x,y]]\; \text{ and } \;  (x+y)^{[2]_\alpha}=x^{[2]_\alpha}+ y^{[2]_\alpha}+ [x,y].
\]
\begin{Proposition}
Twisting with a morphism $\alpha$ an ordinary Lie algebra with a $p$-structure gives rise to a Hom-Lie algebra with a $p$-structure. More precisely, given an ordinary Lie algebra $(\fg, [\cdot, \cdot])$ and a Lie algebra morphism $\alpha$. Then $(\fg, [\cdot, \cdot]_{\alpha}, \alpha )$, where $[\cdot, \cdot]_{\alpha}:=\alpha \circ [\cdot ,\cdot]$,  is a Hom-Lie algebra with a $p$-structure given by 
\[
x^{[p]_\alpha}:= \alpha^{p-1}(x^{[p]}).
\]
\end{Proposition}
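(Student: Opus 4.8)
The plan is to verify directly that the proposed map $x^{[p]_\alpha} := \alpha^{p-1}(x^{[p]})$ satisfies the three axioms (R1), (R2), (R3) for the Hom-Lie algebra $(\fg, [\cdot,\cdot]_\alpha, \alpha)$, using the known restricted structure $(\fg,[\cdot,\cdot],{}^{[p]})$ of the ordinary Lie algebra together with the hypothesis that $\alpha$ is a Lie algebra morphism. The crucial preliminary observation, which I would establish first, is the relationship between the twisted adjoint action and the ordinary one: since $[x,y]_\alpha = \alpha[x,y]$ and $\alpha$ is multiplicative, one computes that for the twisted bracket $\ad^\alpha_x(y) := [x,y]_\alpha = \alpha([x,y])$, and more generally iterated twisted adjoint maps pull powers of $\alpha$ through the ordinary bracket. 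I would record the identity $\ad^\alpha(\alpha^j(x)) = \alpha \circ \ad(\alpha^j(x))$ (as operators, up to the appropriate composition with $\alpha$), which is the engine for translating the ordinary Jacobson identities into twisted ones.

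The homogeneity axiom (R2), $(\lambda x)^{[p]_\alpha} = \lambda^p x^{[p]_\alpha}$, is immediate: apply $\alpha^{p-1}$ to the ordinary relation $(\lambda x)^{[p]} = \lambda^p x^{[p]}$ and use linearity of $\alpha^{p-1}$. For axiom (R1), the goal is
\[
\ad^\alpha(x^{[p]_\alpha}) \circ \alpha^{p-1} = \ad^\alpha(\alpha^{p-1}(x)) \circ \cdots \circ \ad^\alpha(x).
\]
Here I would substitute $x^{[p]_\alpha} = \alpha^{p-1}(x^{[p]})$ on the left and expand both sides in terms of the ordinary $\ad$, repeatedly extracting the morphism $\alpha$ from each twisted bracket. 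The ordinary restrictedness axiom $\ad(x^{[p]}) = \ad(x)^p$ then matches the fully-expanded right-hand side after collecting the accumulated powers of $\alpha$; the bookkeeping of how many copies of $\alpha$ each of the $p$ factors contributes is the place where one must be careful, but it is a routine (if tedious) induction on $p$.

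The main obstacle is axiom (R3), specifically identifying the twisted additive correction terms $s_i(x,y)$ with the images $\alpha^{p-1}(s_i^{\mathrm{ord}}(x,y))$ of the ordinary ones. The $s_i$ are defined implicitly through the generating identity
\[
\ad^\alpha(\alpha^{p-2}(\lambda x + y)) \circ \cdots \circ \ad^\alpha(\lambda x + y)(x) = \sum_{1\le i \le p-1} i\, s_i(x,y)\, \lambda^{i-1},
\]
so the task is to show that the left-hand side equals $\alpha^{p-1}$ applied to the corresponding ordinary expression, after which comparing coefficients of $\lambda^{i-1}$ gives $s_i(x,y) = \alpha^{p-1}(s_i^{\mathrm{ord}}(x,y))$ and the additivity relation follows by applying $\alpha^{p-1}$ to the ordinary one. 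The delicate point is that the definition of $s_i$ involves a composition of twisted adjoints applied to $x$ (not to a bracket of generators as in (R1)), so the powers of $\alpha$ that get extracted from the nested twisted brackets must be shown to assemble into exactly $\alpha^{p-1}$ acting on the ordinary Jacobson polynomial; I would treat $p=2$ separately as a warm-up, where (R3) reduces to the already-displayed relation $(x+y)^{[2]_\alpha} = x^{[2]_\alpha} + y^{[2]_\alpha} + [x,y]_\alpha$, to make the pattern transparent before handling general $p$ by induction.
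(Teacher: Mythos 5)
Your proposal is correct and follows essentially the same route as the paper's proof: verify (R1), (R2), (R3) directly by peeling one factor of $\alpha$ off each twisted bracket so that the nested twisted adjoints collapse to $\alpha^{p}$ (resp.\ $\alpha^{p-1}$) applied to the ordinary Jacobson expressions, then invoke the ordinary restrictedness axioms. The paper carries out exactly the bookkeeping you describe, including identifying the twisted $s_i(x,y)$ with $\alpha^{p-1}$ of the ordinary ones via the generating identity in $\lambda$.
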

\begin{proof}
It has been shown in \cite{Y} that if $(\fg, [\cdot, \cdot])$ is an ordinary Lie algebra, then $(\fg, [\cdot, \cdot]_{\alpha})$ where $[\cdot, \cdot]_{\alpha}:=\alpha \circ [\cdot ,\cdot]$, is a Hom-Lie algebra. Now, let us show that the map $[p]_\alpha$ defines a $p$-structure on the Hom-Lie algebra $(\fg, [\cdot, \cdot]_\fg)$. Indeed, let us check Axiom (R1). The LHS reads
\[
\ad(x^{[p]_\alpha}) \circ \alpha^{p-1}(y)=[x^{[p]_\alpha}, \alpha^{p-1}(y)]_\alpha=\alpha( [\alpha^{p-1} ( x^{[p]}),\alpha^{p-1}(y)])=\alpha^{p}([x^{[p]}, y]).
\]
The RHS reads 
\begin{align*}
&\ad (\alpha^{p-1}(x)) \circ \ad(\alpha^{p-2}(x))\circ \cdots \circ \ad(x)(y)=[\alpha^{p-1}(x), [\alpha^{p-2}(x), \ldots,[x,y]_\alpha]_\alpha\\[2mm]
&=\alpha ([\alpha^{p-1}(x), \alpha( [\alpha^{p-2}(x),[ \ldots,\alpha( [x,y]))])=\alpha^p ([x, [x,\ldots,[x,y]])= \alpha^p([x^{[p]}, y]).
\end{align*}
Axiom (R2) is obviously satisfied. Let us check Axiom (R3). Indeed, 
\begin{align*}
&(x+y)^{[p]_\alpha}=\alpha^{p-1}((x+y)^{[p]})=\alpha^{p-1}\left (x^{[p]}+y^{[p]} +\sum_{1 \leq  i\leq p-2} s_i(x,y) \right )\\[2mm]
&= x^{[p]_\alpha}+ y^{[p]_\alpha}+\left ( \sum_{1 \leq i\leq p-2} \alpha^{p-1} (s_i(x,y)) \right ).
\end{align*}
Now, 
 \begin{align*}
&    \ad({\alpha^{p-2}}(\lambda x+y))\circ\ad({\alpha^{p-3}}(\lambda x+y)) \circ \cdots \circ \ad (\lambda x+y)(x)\\[2mm]
&=[\alpha^{p-2}(\lambda x+y),[\alpha^{p-3}(\lambda x+y), \ldots, [\lambda x+y,x]_\alpha]_\alpha  \\[2mm]
&=\alpha ([\alpha^{p-2}(\lambda x+y),\alpha([\alpha^{p-3}(\lambda x+y),[ \ldots, \alpha ([\lambda x+y,x]))])  \\[2mm]
&=\alpha^{p-1} ([\lambda x+y,[\lambda x+y,[ \ldots,  [\lambda x+y,x]]) \\[2mm]
&= \alpha^{p-1}\left (\sum_{1\leq i \leq p-1} i s_i(x,y) \lambda^{i-1} \right )= \sum_{1\leq i \leq p-1} i \alpha^{p-1}(s_i(x,y)) \lambda^{i-1}.
    \end{align*}
The proof is now complete.
\end{proof}

\begin{Proposition}
Let $\fg$ be a restricted Hom-Lie algebra in characteristic 2 with a twist map $\alpha$. On the superspace $\fh:=\fg\oplus \Pi (\fg)$ there exists a Hom-Lie superalgebra structure defined as follows \textup{(}for all $x,y \in \fg$\textup{)}:
\[
[x,y]_\fh:=[x,y]_\fg, \quad [\Pi(x),y]_\fh:=\Pi ([x,y]_\fg), \quad s_\fh(\Pi(x))=x^{[2]_\alpha}.
\]
\end{Proposition}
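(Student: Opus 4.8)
The plan is to equip $\fh=\fg\oplus\Pi(\fg)$ with the even twist $\alpha_\fh$ given by $\alpha_\fh(x)=\alpha(x)$ for $x\in\fg=\fh_\ev$ and $\alpha_\fh(\Pi(x))=\Pi(\alpha(x))$ for $\Pi(x)\in\Pi(\fg)=\fh_\od$, and then to verify the four axioms of Definition \ref{MainDef} in turn. Axiom (i) holds for free, since $(\fh_\ev,[\cdot,\cdot]_\fh,\alpha_\fh|_{\fh_\ev})$ is literally $(\fg,[\cdot,\cdot]_\fg,\alpha)$, a Hom-Lie algebra by hypothesis. For Axiom (ii) I would check that the (symmetric, hence two-sided) action $x\cdot\Pi(y)=\Pi([x,y]_\fg)$ makes $\Pi(\fg)$ a representation of $\fg$ with twist $\beta:=\alpha_\fh|_\od$. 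The first representation identity $[\alpha(x),\beta(\Pi(y))]_\fh=\beta([x,\Pi(y)]_\fh)$ unwinds, through $\Pi$, to $\alpha([x,y]_\fg)=[\alpha(x),\alpha(y)]_\fg$, i.e. the multiplicativity of $\alpha$ on $\fg$; the second identity unwinds to $[[x,y]_\fg,\alpha(z)]_\fg=[\alpha(x),[y,z]_\fg]_\fg+[\alpha(y),[x,z]_\fg]_\fg$, which is exactly the Hom-Jacobi identity of $\fg$ rewritten in characteristic $2$.

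For Axiom (iii) I would compute $s_\fh(\Pi(x)+\Pi(y))+s_\fh(\Pi(x))+s_\fh(\Pi(y))=(x+y)^{[2]_\alpha}+x^{[2]_\alpha}+y^{[2]_\alpha}$, which by (R3) equals $[x,y]_\fg\in\fh_\ev$; this is visibly bilinear, so $s_\fh$ is a legitimate squaring $\fh_\od\times\fh_\od\to\fh_\ev$ inducing the odd bracket $[\Pi(x),\Pi(y)]_\fh=[x,y]_\fg$, while the scaling axiom $s_\fh(\lambda\Pi(x))=\lambda^2 s_\fh(\Pi(x))$ is precisely (R2). It remains to check Axiom (iv). The Jacobi identity \eqref{JISH} with $u=\Pi(x)$ splits by the parity of the second argument: for $w=y\in\fh_\ev$ the left side is $[x^{[2]_\alpha},\alpha(y)]_\fg$ and the right side reduces, via the odd bracket just computed, to $[\alpha(x),[x,y]_\fg]_\fg$, and these agree by (R1); for $w=\Pi(z)\in\fh_\od$ both sides land in $\Pi(\fg)$ and, after stripping $\Pi$, reduce again to $[x^{[2]_\alpha},\alpha(z)]_\fg=[\alpha(x),[x,z]_\fg]_\fg$, i.e. (R1) once more. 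The multiplicativity condition \eqref{multiplicativity} is checked on the three bi-parity cases, each reducing to $\alpha([\cdot,\cdot]_\fg)=[\alpha(\cdot),\alpha(\cdot)]_\fg$ transported by $\Pi$ where needed.

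The delicate axiom is \eqref{multsq}, namely $\alpha_\fh(s_\fh(\Pi(x)))=s_\fh(\alpha_\fh(\Pi(x)))$: unwinding both sides gives $\alpha(x^{[2]_\alpha})=(\alpha(x))^{[2]_\alpha}$, so the $2$-structure must commute with the twist. This is the main obstacle, and it does \emph{not} follow from (R1)--(R3) alone. Applying $\alpha$ to (R1) and comparing with (R1) written for $\alpha(x)$ only yields $[\alpha(x^{[2]_\alpha})+(\alpha(x))^{[2]_\alpha},\alpha^2(y)]_\fg=0$ for all $y$, so the discrepancy merely centralizes $\alpha^2(\fg)$; equality is therefore automatic when $\alpha$ is invertible and $\fg$ has trivial center, but must otherwise be imposed. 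I would accordingly build the compatibility $\alpha\circ[2]_\alpha=[2]_\alpha\circ\alpha$ into the notion of a restricted Hom-Lie algebra, noting that it holds for the preceding twisting construction $x^{[2]_\alpha}=\alpha(x^{[2]})$ whenever $\alpha$ is restricted on the underlying Lie algebra. Granting this, \eqref{multsq} follows at once, all four axioms hold, and $(\fh,[\cdot,\cdot]_\fh,s_\fh,\alpha_\fh)$ is a Hom-Lie superalgebra in characteristic $2$.
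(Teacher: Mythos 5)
Your proof follows the same route as the paper's: the scaling axiom for $s_\fh$ is (R2), bilinearity of the polarization is (R3) since it reduces to the bracket on $\fg$, and the Jacobi identity \eqref{JISH} in both parity cases strips down through $\Pi$ to (R1). The one substantive difference is that the paper's proof verifies \emph{only} these items and is silent on \eqref{multsq}, whereas you correctly observe that $\alpha_\fh(s_\fh(\Pi(x)))=s_\fh(\alpha_\fh(\Pi(x)))$ unwinds to $\alpha(x^{[2]_\alpha})=(\alpha(x))^{[2]_\alpha}$, and that this does \emph{not} follow from (R1)--(R3): applying $\alpha$ to (R1) and comparing with (R1) at $\alpha(x)$ only shows the discrepancy is killed by $\ad(\cdot)\circ\alpha^2$, so equality is automatic only under extra hypotheses (e.g.\ $\alpha$ surjective and $\fg$ centerless). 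This is a genuine omission in the paper's definition of restrictedness and in its proof of the proposition; your fix --- requiring $\alpha\circ[2]_\alpha=[2]_\alpha\circ\alpha$ as part of the $p$-structure, and checking that it holds for the twisting construction $x^{[2]_\alpha}=\alpha^{p-1}(x^{[p]})$ when $\alpha$ is a restricted morphism --- is the right one, and it is also what makes the subsequent compatibility of queerification with twisting go through. In short: same decomposition and same use of (R1)--(R3), but your version is more complete and identifies a needed compatibility condition the paper takes for granted.
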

\begin{proof} Let us check that the map $s_\fh$ is indeed a squaring on $\fh$. The condition $s_\fh(\lambda \Pi(x))=\lambda^2 s_\fh(\Pi(x))$, for all $\lambda \in \mathbb{K}$ and for all $x\in \fg$, is an immediate consequence of condition (R2). Moreover, the map 
\[
(\Pi(x), \Pi(y)\mapsto s_\fh(\Pi(x)+\Pi(y))+s_\fh(\Pi(x))+ s_\fh(\Pi(y))=(x+y)^{[2]_\alpha}+ x^{[2]_\alpha}+ y^{[2]_\alpha}=[x,y]_\fg
\]
is obviously bilinear because it coincides with the Lie bracket on $\fg$.

Let us check the Jacobi identity  involving the squaring. Indeed, for all $y\in \fh_\ev$ and for all $\Pi(x)\in \fh_\od$, we have 
\begin{align*}
&[s_\fh(\Pi(x)),\alpha(y)]_\fh=[x^{[2]_\alpha}, \alpha(y)]_\fh=[x^{[2]_\alpha}, \alpha(y)]_\fg=[\alpha(x), [x,y]_\fg]_\fg.
\end{align*}
On the other hand
\begin{align*}
&[\alpha(\Pi(x)), [\Pi(x),y]_\fh]_\fh=[\Pi(\alpha(x)), \Pi([x,y]_\fg)]_\fh=\Pi([\Pi(\alpha(x)), [x,y]_\fg]_\fh)=\Pi^2([\alpha(x), [x,y]_\fg]_\fh)\\[2mm]
&=[\alpha(x), [x,y]_\fg]_\fg.
\end{align*}
For all $\Pi(y)\in \fh_\od$ and for all $\Pi(x)\in \fh_\od$,  we have 
\begin{align*}
&[s_\fh(\Pi(x)),\alpha(\Pi(y))]_\fh=[x^{[2]_\alpha}, \alpha(\Pi(y))]_\fh=\Pi([x^{[2]_\alpha}, \alpha(y)]_\fg)=\Pi([\alpha(x), [x,y]_\fg]_\fg).
\end{align*}
On the other hand
\begin{align*}
&[\alpha(\Pi(x)), [\Pi(x),\Pi(y)]_\fh]_\fh=[\Pi(\alpha(x)), s_\fh(\Pi(x)+\Pi(y))+s_\fh(\Pi(x))+s_\fh(\Pi(x)]_\fh\\[2mm]
&=[\Pi(\alpha(x)), (x+y)^{[2]_\alpha}+x^{[2]_\alpha}+y^{[2]_\alpha}]_\fh=[\Pi(\alpha(x)), [x,y]_\fg]_\fh=\Pi([\alpha(x), [x,y]_\fg]_\fg).\qedhere
\end{align*}\end{proof}

\begin{Proposition}
Let $\fg$ be a restricted Lie algebra in characteristic 2 and  $\fh:=\fg\oplus \Pi (\fg)$ be its  queerification, see \cite{BLLSq},   defined as follows \textup{(}for all $x,y \in \fg$\textup{)}:
\[
[x,y]_\fh:=[x,y]_\fg, \quad [\Pi(x),y]_\fh:=\Pi ([x,y]_\fg), \quad s_\fh(\Pi(x))=x^{[2]}.
\]
Let $\alpha:\fg\rightarrow \fg $ be a Lie algebra morphism. Let us extend it to $\tilde \alpha$ on $\fh$ by declaring $\alpha (\Pi (x)):=\Pi (\alpha(x))$ for all $x\in \fg$.   Then twisting the Lie superalgebra $\fh$ along $\tilde{\alpha}$ is exactly the  queerification of the Hom-Lie algebra $\fg_\alpha$ obtained by twisting $\fg$ along $\alpha$. Namely,
$$
\fh_{\tilde{\alpha}}=(\fg \oplus \Pi (\fg))_{\tilde{\alpha}}=\fg_\alpha \oplus \Pi (\fg_\alpha).
$$
\end{Proposition}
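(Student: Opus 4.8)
The plan is to prove the asserted equality of Hom-Lie superalgebras by direct comparison. Both constructions are carried out on the \emph{same} underlying superspace $\fg\oplus\Pi(\fg)$ and, as I will confirm, both carry the \emph{same} twist $\tilde\alpha$; hence it suffices to check that the data defining a Hom-Lie superalgebra agree term by term: the even bracket on $\fg$, the mixed (module) bracket $[\Pi(\fg),\fg]$, the squaring $s$ on $\Pi(\fg)$, and the twist map. Because the bracket of two odd elements is determined by the squaring through condition (iii) in the definition of a Hom-Lie superalgebra in characteristic $2$, matching the squaring automatically matches $[\Pi(x),\Pi(y)]$ as well, so only the two brackets listed, the squaring, and the twist need attention.

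First I would compute the left-hand side $\fh_{\tilde\alpha}$. Granting that $\tilde\alpha$ is an even morphism of the Lie superalgebra $\fh$ (checked at the end), Proposition \ref{alphamap} describes the twisted structure explicitly: its bracket is $\tilde\alpha\circ[\cdot,\cdot]_\fh$, its squaring is $\tilde\alpha\circ s_\fh$, and its twist is $\tilde\alpha$. Evaluating on generators and using $\tilde\alpha(\Pi(x))=\Pi(\alpha(x))$ gives
\[
[x,y]_{\fh_{\tilde\alpha}}=\alpha([x,y]_\fg),\qquad [\Pi(x),y]_{\fh_{\tilde\alpha}}=\Pi(\alpha([x,y]_\fg)),\qquad s_{\fh_{\tilde\alpha}}(\Pi(x))=\alpha\big(x^{[2]}\big),
\]
the last equality because $x^{[2]}\in\fg$, on which $\tilde\alpha$ restricts to $\alpha$.

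Next I would compute the right-hand side $\fg_\alpha\oplus\Pi(\fg_\alpha)$. The Hom-Lie algebra $\fg_\alpha$ has bracket $\alpha\circ[\cdot,\cdot]_\fg$ and, by the preceding proposition on twisting $p$-structures specialised to $p=2$, the $p$-structure $x^{[2]_\alpha}=\alpha(x^{[2]})$. Feeding this restricted Hom-Lie algebra into the queerification construction established just above yields even bracket $\alpha([x,y]_\fg)$, mixed bracket $\Pi([x,y]_{\fg_\alpha})=\Pi(\alpha([x,y]_\fg))$, squaring $s(\Pi(x))=x^{[2]_\alpha}=\alpha(x^{[2]})$, and the twist obtained by extending the twist $\alpha$ of $\fg_\alpha$ via $\Pi(x)\mapsto\Pi(\alpha(x))$, which is exactly $\tilde\alpha$.

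Comparing the two lists, every structure map coincides, so the two Hom-Lie superalgebras are identical. The one step that carries real content---and where the characteristic-$2$ bookkeeping actually bites---is the squaring: one must see that the squaring produced by ``queerify, then twist'' (namely $\tilde\alpha(x^{[2]})$) equals the squaring produced by ``twist, then queerify'' (namely $x^{[2]_\alpha}$), and this is precisely the identity $x^{[2]_\alpha}=\alpha(x^{[2]})$ coming from the $p$-structure twisting proposition. The secondary point, to be verified before invoking Proposition \ref{alphamap} on the left, is that $\tilde\alpha$ is genuinely a morphism of the Lie superalgebra $\fh$; concretely $\tilde\alpha\circ s_\fh=s_\fh\circ\tilde\alpha$ reduces to $\alpha(x^{[2]})=\alpha(x)^{[2]}$, i.e. compatibility of $\alpha$ with the $p$-structure of $\fg$, which is the mild hypothesis making the left-hand twist well defined. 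I expect no further obstacle, since everything else is the formal matching of brackets.
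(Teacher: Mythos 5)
Your proposal is correct and follows essentially the same route as the paper: a direct term-by-term comparison of the two brackets and the squaring on $\fg\oplus\Pi(\fg)$, with the whole content residing in the identity $x^{[2]_\alpha}=\alpha(x^{[2]})$ from the preceding proposition on twisted $p$-structures. Your additional remark that $\tilde\alpha\circ s_\fh=s_\fh\circ\tilde\alpha$ requires $\alpha$ to respect the $2$-structure (so that the left-hand twist is legitimately an instance of Proposition~\ref{alphamap}) is a point the paper leaves implicit, and is a worthwhile clarification rather than a deviation.
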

\begin{proof} Let $x,y\in \fg$. We have 
\[
[x,y]_{\fh_{\tilde \alpha}}=\tilde \alpha( [x,y]_\fh)=\alpha([x,y]_\fg).
\]
On the other hand, 
\[
[x,y]_{\fg_\alpha\oplus \Pi(\fg_\alpha)}=[x,y]_{\fg_\alpha}= \alpha([x,y]_\fg).
\]
Similarly, one can easily prove that
\[
[\Pi(x),y]_{\fh_{\tilde \alpha}}=[\Pi(x),y]_{\fg_\alpha \oplus \Pi(\fg_\alpha)}.
\]
Let us only prove that their squarings coincide. Indeed, for all $x\in \fg$ we have
\[
s_{\fh_{\tilde \alpha}}(\Pi(x))= \alpha\circ s_{\fh}(\Pi(x))=\alpha (x^{[2]}).
\]
On the other hand, 
\[
s_{\fg_\alpha\oplus \Pi (\fg_\alpha)}(\Pi(x))=x^{[2]_\alpha}=\alpha (x^{[2]}).\qedhere
\]
\end{proof}

\section{Cohomology and Deformations of finite dimensional  Hom-Lie superalgebras}
\subsection{Cohomology of ordinary Lie superalgebras in characteristic 2} In this section we  define a cohomology theory of Lie superalgebras in characteristic 2. The first instances can be found in  \cite{BGLL1}. Let $\fg$ be a Lie superalgebra in characteristic 2 and $M$ be a $\fg$-module.
Let us introduce a map 


\begin{equation}\label{mapp}
\mathfrak p:\fg_\od \times \wedge ^n\fg \rightarrow M,
\end{equation}
with the following properties:

(i) $\mathfrak{p}(\lambda x,z)=\lambda^2 \mathfrak{p}(x,z)$ for all $x\in \fg_\od$, for all $z\in \wedge^n\fg$ and for all $\lambda \in \mathbb K$;

(ii) For all $x\in \fg_\od$, the map $z\mapsto {\mathfrak p}(x,z)$ is multi-linear.

For $n=0$, the map ${\mathfrak p}$ should be understood as a quadratic form on $\fg_\od$ with values in~$M$. 

We are now ready to define the space of cochains on $\fg$ with values in $M$. We set ($n>1$)
\begin{equation}\label{xcochains}
\begin{array}{lcl}
XC^{-1}(\fg;M)&:=& \{0\},\\[2mm]
XC^0(\fg;M)&:=& M,\\[2mm]
XC^1(\fg;M)&:=&\{c \;| \text{ where $c:\fg \rightarrow M$ is linear} \}, \\[2mm]
 XC^n(\fg;M)&:=& \{ (c,\mathfrak{p}) \; | \text{ where } c:\wedge^n \fg \rightarrow M  \text{ is a multi-linear map and } 
      \mathfrak p:\fg_\od \times  \wedge^{n-2} \fg \rightarrow M  \\[1mm]
&& \text{ is a map as in \eqref{mapp} }
 \text{ such that }
\mathfrak{p}(x+y,z)+\mathfrak{p}(x,z)+\mathfrak{p}(y,z)=c(x,y,z)   \\[1mm]
 && \text{ for all }  x,y\in \fg_\od \text{ and } z \in \wedge^{n-2}\fg \}.
\end{array}
\end{equation}
We define the {\it differential} ${\mathfrak d}^{-1}:XC^{-1}(\fg,M)\rightarrow XC^0(\fg,M)$ to be the trivial map. The {\it differential} ${\mathfrak d}^0$ is given by  \[
{\mathfrak d}^0:XC^0(\fg,M)\rightarrow XC^1(\fg,M) \quad m \mapsto {\mathfrak d}^0(m),
\]
where ${\mathfrak d}^0(m)(x)=x\cdot m. $
The {\it differential} ${\mathfrak d}^1$ is given by \[
{\mathfrak d}^1:XC^1(\fg,M)\rightarrow XC^2(\fg,M) \quad c \mapsto (dc, \mathfrak{q}), 
\] where 
\begin{equation}
\label{diffsuper}
\begin{array}{lcl}
dc(x,z) & = & c([x,z]) + x \cdot c(z) + z \cdot c(x)\quad \text{for all $x,z\in \fg$};\\[1mm]
\fq(x) & = & c(s(x))+x\cdot c(x) \quad \text{for all $x\in \fg_\od$}.
\end{array}
\end{equation}
Now, for $n\geq 2$ the {\it differential} ${\mathfrak d}^n$ is given by \[
{\mathfrak d}^n:XC^n(\fg,M)\rightarrow XC^{n+1}(\fg,M) \quad (c,\mathfrak{p}) \mapsto (d^n c, d^n\mathfrak{p}), 
\] where 
\begin{equation}
\begin{array}{lcl}
d^nc(z_1,\ldots, z_{n+1}) & = & \displaystyle \sum_{1 \leq i \leq n+1} z_i \cdot c(z_1, \ldots,\hat{z_i},\ldots,z_{n+1}) \\[4mm]
&&\displaystyle + \sum_{1 \leq i< j \leq n+1}c([z_i, z_j], z_1, \ldots, \hat{z_i}, \ldots, \hat{z_j},\ldots z_{n+1}),\\[2mm]
d^n{\mathfrak p}(x,z_1,\ldots,z_{n-1}) & = & \displaystyle x\cdot c(x,z_1,\ldots, z_{n-1})+\sum_{1 \leq i \leq n-1}z_i \cdot {\mathfrak p}(x,z_1, \ldots, \hat{z_i}, \ldots, z_{n-1})\\[3mm]
&&+c(s(x), z_1, \ldots, z_{n-1}) + \displaystyle\sum_{1 \leq i \leq n-1} c([x,z_i],x, z_1, \ldots, \hat{z_i}, \ldots, z_{n-1})\\[2mm]
&&\displaystyle + \sum_{1\leq i<j \leq n-1}{\mathfrak p}(x, [z_i, z_j],z_1,\ldots,\hat{z_i},\ldots, \hat{z_j},\ldots, z_{n-1}).
\end{array}
\end{equation}


\begin{Theorem}\label{welldefId}
The maps ${\mathfrak d}^n$ is well defined. Moreover, for all integers $n$
$$ {\mathfrak d}^{n+1}\circ {\mathfrak d}^n=0. $$ 
Hence, the pair $(XC^*(\fg,M),{\mathfrak d}^*)$ defines a cohomology complex for Lie superalgebras in characteristic 2.
\end{Theorem}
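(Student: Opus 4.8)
The plan is to prove the two assertions—well-definedness of each $\mathfrak{d}^n$ and the complex identity $\mathfrak{d}^{n+1}\circ\mathfrak{d}^n=0$—separately, handling the low-degree differentials $\mathfrak{d}^0,\mathfrak{d}^1$ by hand and the generic case $n\geq 2$ by a systematic term-by-term computation. Since a cochain in $XC^n(\fg,M)$ is a pair $(c,\mathfrak{p})$ and $\mathfrak{d}^n(c,\mathfrak{p})=(d^nc,d^n\mathfrak{p})$, every statement splits into a $c$-component and a $\mathfrak{p}$-component; the $c$-component is governed by the familiar Chevalley--Eilenberg formula (with all signs trivial in characteristic $2$), while the $\mathfrak{p}$-component is the genuinely new ingredient carrying the squaring.

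For well-definedness I would first check that $d^nc:\wedge^{n+1}\fg\to M$ is multilinear and that $d^n\mathfrak{p}$ is a map of the type (\ref{mapp}): the relation $d^n\mathfrak{p}(\lambda x,z)=\lambda^2 d^n\mathfrak{p}(x,z)$ holds because every summand of the defining formula is quadratic in $x$—the terms $x\cdot c(x,\dots)$ and $c([x,z_i],x,\dots)$ contain two factors of $x$, while $c(s(x),\dots)$ and the $\mathfrak{p}(x,\dots)$-terms scale by $\lambda^2$ through $s(\lambda x)=\lambda^2 s(x)$ and property (\ref{mapp})(i). The crucial compatibility $d^n\mathfrak{p}(x+y,z)+d^n\mathfrak{p}(x,z)+d^n\mathfrak{p}(y,z)=d^nc(x,y,z)$ for $x,y\in\fg_\od$ and $z\in\wedge^{n-1}\fg$ I would obtain by polarizing each quadratic summand in characteristic $2$: the two module-action families produce $x\cdot c(y,z)+y\cdot c(x,z)$ and $\sum_i z_i\cdot c(x,y,\dots)$ (the latter through the defining compatibility of the input pair $(c,\mathfrak{p})$), the term $c(s(x),\dots)$ polarizes to $c([x,y],z)$ via $[x,y]=s(x+y)+s(x)+s(y)$, and the remaining two families give exactly the mixed brackets $c([x,z_i],y,\dots)+c([y,z_i],x,\dots)$ and $c(x,y,[z_i,z_j],\dots)$. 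Comparing these with the Chevalley--Eilenberg expansion of $d^nc(x,y,z_1,\dots,z_{n-1})$, every term matches—reorderings of the arguments of $c$ being harmless since $c$ is alternating and we work in characteristic $2$—which is precisely the required identity.

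For the complex property I verify $\mathfrak{d}^{n+1}\circ\mathfrak{d}^n=0$ on each component. The $c$-component $d^{n+1}(d^nc)=0$ is the classical identity: expanding twice yields the usual cancellation between the double module-action terms and the single-bracket terms, using the Jacobi identity for $[\cdot,\cdot]$ on $\fg$ and the module axiom $[x,y]\cdot m=x\cdot(y\cdot m)+y\cdot(x\cdot m)$ read off from (\ref{repres}), all signs being $+1$. The substantive computation is the vanishing of the $\mathfrak{p}$-component $d^{n+1}(d^n\mathfrak{p})=0$, where $\mathfrak{d}^{n+1}$ is applied to the pair $(d^nc,d^n\mathfrak{p})$ so that both components feed into the formula. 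I would expand all five families of terms, sort them by combinatorial type (a pure double action on $x$, a term $c(s(x),\dots)$, terms carrying a single bracket $[x,z_i]$, terms carrying $[z_i,z_j]$, and the diagonal $\mathfrak{p}$-contributions), and cancel them in pairs. Three structural identities drive these cancellations: the representation-squaring relation $x\cdot(x\cdot m)=s(x)\cdot m$ from (\ref{repres}), the Jacobi identity involving the squaring $[s(x),y]=[x,[x,y]]$, and once more $[x,y]=s(x+y)+s(x)+s(y)$ to convert a sum of squarings into a bracket.

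The hard part will be this last step: the $\mathfrak{p}$-component of $\mathfrak{d}^{n+1}\circ\mathfrak{d}^n=0$ is a lengthy bookkeeping problem in which the terms do not cancel in obvious pairs but only after repeatedly invoking the three identities above, and one must track carefully how the interaction between the $c$-feed and the $\mathfrak{p}$-feed into $\mathfrak{d}^{n+1}$ produces exactly the combinations these identities resolve. I would first settle the base case $n=2$ (that is, $\mathfrak{d}^2\circ\mathfrak{d}^1=0$, where $\mathfrak{p}=\mathfrak{q}$ is merely a quadratic form on $\fg_\od$) so as to fix the cancellation pattern explicitly, and then run the same grouping for general $n$, the additional $z_i$-slots entering only as passive spectators carried along multilinearly. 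The remaining low-degree identity $\mathfrak{d}^1\circ\mathfrak{d}^0=0$ reduces directly to the module axioms and is immediate.
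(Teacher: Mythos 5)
Your proposal is correct, and its computational content coincides with what the paper actually does; the only real difference is organizational. The paper gives no direct proof of this theorem: it states that the proof ``will be given next,'' proves the well-definedness and the identity $\mathfrak{d}^{n}_\alpha\circ\mathfrak{d}^{n-1}_\alpha=0$ for the twisted (Hom) complex $(XC^*_\alpha(\fg,M),\mathfrak{d}^*_\alpha)$ in the following subsection, and recovers the present statement by specializing $\alpha=\mathrm{id}_\fg$ and $\beta=\mathrm{id}_M$. You instead prove the untwisted case directly, which is a perfectly legitimate and more self-contained route; the paper's choice buys the general Hom-statement for the same amount of work. Beyond that, your decomposition into a $c$-component and a $\mathfrak{p}$-component, the polarization argument for well-definedness (quadratic scaling of each summand plus $[x,y]=s(x+y)+s(x)+s(y)$ to match $d^nc(x,y,\cdot)$), the five-family expansion of $d^{n+1}(d^n\mathfrak{p})$, and the three identities you single out --- $x\cdot(x\cdot m)=s(x)\cdot m$, the squaring Jacobi identity, and the polarization of $s$ --- are exactly the ingredients of the paper's Lemma on cochains and its proof of Theorem on $\mathfrak d^2=0$ (Parts 1--5 there), read at $\alpha=\mathrm{id}$. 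One small index slip: what you call the ``base case $n=2$'' is the instance $\mathfrak{d}^2\circ\mathfrak{d}^1=0$, i.e.\ $n=1$ in the statement's indexing; this does not affect the argument.
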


The proof of the theorem will be given next  when considering the cohomology of Hom-Lie superalgebras that reduce to ordinary Lie superalgebras when the structure map is the identity.

\subsection{Elucidation for $n=2,3$} 
Let us first exhibit the sets of cochains in the case where  $n=2,3$. 

If $v\in M$ and $a,b\in \fg^*_\od$, we can define the cochain $(v\otimes a\wedge b,\mathfrak q )\in XC^2(\fg,M)$ such that the quadratic form is ${\mathfrak q}(x)=a(x) b(x)\, v$ for all $x\in \fg_\od$. The polar form\footnote{Recall that to each quadratic from $\fq$ with values in a space $M$, its polar form is the bilinear form with values in $M$ given by
\[
B_\fq(x,y):=\fq(x+y)+\fq(x)+\fq(y).
\]} associated to $\fq$ is 
\[
B_\fq(x,y) = (a(x)b(y) + a(y)b(x)) \, v \text{ for all $x,y\in \fg_\od$}.
\] 
In particular, we can define the cochain $c=v\otimes a\wedge a$, where $q(x) = v (a(x))^2$ for all $x\in \fg_\od$  and $c(x,y) = 0$ for all $x,y\in \fg$.

Similarly, if $v\in M$, and $a,b\in \fg^*_\od$ but $c\in \fg$, we can define the cochain $(v\otimes a\wedge b \wedge c, {\mathfrak p} ) \in XC^2(\fg,M)$ such that the map $\mathfrak{p}$ is 
\[
{\mathfrak p}(x,z)=(a(x) b(x) c(z)+a(z) b(x) c(x)+ a(x)b(z)c(x))\, v \quad \text{for all $x\in \fg_\od$ and $z\in \fg$}
\]
Now, a direct computation shows that
\[
\begin{array}{lcl}
\mathfrak{p}(x+y,z)+\mathfrak{p}(x,z)+\mathfrak{p}(y,z) & = &  (a(x)b(y)c(z) + a(y)b(x) c(z)+ a(z)b(x) c(y)) \, v\\[2mm]
& & + (a(z) b(y) c(x)+b(z)a(x)c(y)+b(z)a(y)c(x))v\\[2mm]
&=& v \otimes (a\wedge b \wedge c)(x,y,z).
\end{array}
\] 

A 1-cocycle $c$ on $\fg$ with values in a $\fg$-module $M$ must satisfy the following conditions:
\begin{eqnarray}
\label{Cond1} x\cdot c(z)+z\cdot c(x)+c([x,z])&=& 0 \quad \text{for all $x,z\in \fg$},\\[2mm]
\label{Cond2} x\cdot c(x)+c(s(x)) &=& 0\quad \text{for all $x\in \fg_\od$}.
\end{eqnarray}
A 2-cocycle $(c,\fq)$ on $\fg$ with values in $M$ must satisfy the following conditions:
\begin{eqnarray}
\label{2-cocCond1} 
0 &=& x\cdot c(y,z)+c([x,y],z)+ \circlearrowleft (x,y,z) \quad \text{for all $x,y,z\in \fg$},\\[2mm]
0 &=&  x\cdot c(x,z)+z \cdot \fq(x)+ c(s(x),z)+ c([x,z],x]\\[2mm]
\nonumber &&\text{for all $x\in \fg_\od$ and for all $z\in \fg$},
\end{eqnarray}
\subsection{Cohomology  of Hom-Lie superalgebras in characteristic 2} Let  $( \fg, [\cdot,\cdot],s, \alpha)$ be a Hom-Lie superalgebra in characteristic 2 and $(M,\beta)$ be a $\fg$-module, see Definition \ref{defmodalpha}. The space of $n$-cochains are defined similarly to  (\ref{xcochains}) with a slight  difference with respect to degree 0 space and  an extra condition that is
\begin{equation}
    \label{cohalphabeta}
\beta \circ c =c \circ (\alpha\wedge \cdots \wedge \alpha), \quad \text{ and } \quad  \beta \circ {\mathfrak p} ={\mathfrak p} \circ (\alpha\wedge \cdots \wedge \alpha).
\end{equation}
\begin{equation}\label{xcochains2}
\begin{array}{lcl}
XC_\alpha ^{-1}(\fg;M)&:=& \{0\},\\[2mm]
XC_\alpha ^{0}(\fg;M)&:=& \{m \in M \; | \; \beta(m)=m \text{ and } \alpha(x)\cdot (y\cdot m)=x\cdot (y \cdot m)\text{ for all $x,y\in \fg$} \},\\[2mm]
XC_\alpha^1(\fg;M)&:=&\{c \;| \text{ where $c:\fg \rightarrow M$ is linear and satisfies Eq. \ref{cohalphabeta}} \}, \\[2mm]
 XC_\alpha^n(\fg;M)&:=& \{ (c,\mathfrak{p}) \; | \text{ where } c:\wedge^n \fg \rightarrow M  \text{ is a multi-linear map satisfying Eq.\ref{cohalphabeta} and }  \\[1mm]
&&   \mathfrak p:\fg_\od \times  \wedge^{n-2} \fg \rightarrow M  \text{ is a map as in \eqref{mapp} satisfying Eq. \ref{cohalphabeta} } \text{ such that }\\[1mm]
&&
\mathfrak{p}(x+y,z)+\mathfrak{p}(x,z)+\mathfrak{p}(y,z)=c(x,y,z)   \text{ for all }  x,y\in \fg_\od \text{ and } z \in \wedge^{n-2}\fg \}.
\end{array}
\end{equation}

1-cochains are just linear functions $c$ on $\fg$ with values in a $\fg$-module $M$ such that $\beta\circ c=c \circ \alpha$. Let us define the {\it differentials} in our context. First, let us define define $\mathfrak{d}_\alpha^0$ and $\mathfrak{d}_\alpha^1$.
\[
{\mathfrak d}^0_\alpha:XC^0_\alpha(\fg,M)\rightarrow XC^1_\alpha(\fg,M) \quad m \mapsto d^0_\alpha m, 
\] 
where $d^0_\alpha m(x)=x\cdot m$ for all $x\in \fg$.  Additionally, 
\[
{\mathfrak d}^1_\alpha:XC^1_\alpha(\fg,M)
\rightarrow XC^2_\alpha (\fg,M) \quad c \mapsto (d^1_\alpha c, \fq ), 
\] 
where 
\begin{equation}
\label{diffsuperalpha}
\begin{array}{lcl}
d^1_\alpha c(x,z) & = & c([x,z]) + x \cdot c(z) + y \cdot c(x)\quad \text{for all $x,z\in \fg$};\\[1mm]
\fq (x) & = & c(s(x))+ x\cdot c(x) \quad \text{for all $x\in \fg_\od$}.
\end{array}
\end{equation}

Note that these definitions are consistent as  showed by the following Lemma.
\begin{Proposition} 
The differentials ${\mathfrak d}^0_\alpha$ and ${\mathfrak d}^1_\alpha $ are indeed well-defined; namely, $\text{\rm{Im}}\,({\mathfrak d}^0_\alpha)\subseteq XC^1_\alpha(\fg, M)$ and  $\text{\rm{Im}}\,({\mathfrak d}^1_\alpha)\subseteq XC^2_\alpha(\fg, M)$.
\end{Proposition}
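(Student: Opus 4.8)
The plan is to treat $\mathfrak{d}^0_\alpha$ and $\mathfrak{d}^1_\alpha$ separately, in each case checking that the image satisfies every defining constraint of the target cochain space. Write $x\cdot v$ for $[x,v]_M$, and recall from the first line of \eqref{Hom-Lie-repr} the single module identity $\alpha(x)\cdot\beta(v)=\beta(x\cdot v)$, valid for all $x\in\fg$ and $v\in M$. Together with the hypothesis $\beta\circ c=c\circ\alpha$ (for $c\in XC^1_\alpha(\fg,M)$) and the multiplicativity axioms \eqref{multiplicativity} and \eqref{multsq}, this is essentially all that is needed.

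For $\mathfrak{d}^0_\alpha$, given $m\in XC^0_\alpha(\fg,M)$ I must show that $c:=\mathfrak{d}^0_\alpha m$, with $c(x)=x\cdot m$, lies in $XC^1_\alpha(\fg,M)$, i.e. satisfies $\beta\circ c=c\circ\alpha$. This is immediate: $\beta(c(x))=\beta(x\cdot m)=\alpha(x)\cdot\beta(m)=\alpha(x)\cdot m=c(\alpha(x))$, where the middle equalities use the module identity and the defining condition $\beta(m)=m$ of $XC^0_\alpha(\fg,M)$.

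For $\mathfrak{d}^1_\alpha$, given $c\in XC^1_\alpha(\fg,M)$ I must verify that the pair $(d^1_\alpha c,\mathfrak{q})$ of \eqref{diffsuperalpha} meets the four requirements of $XC^2_\alpha(\fg,M)$. First, $\mathfrak{q}$ is a squaring: linearity of $c$ and \eqref{squaring} give $\mathfrak{q}(\lambda x)=c(s(\lambda x))+(\lambda x)\cdot c(\lambda x)=\lambda^2\mathfrak{q}(x)$. Second, the polarization identity $\mathfrak{q}(x+y)+\mathfrak{q}(x)+\mathfrak{q}(y)=d^1_\alpha c(x,y)$ for $x,y\in\fg_\od$: substituting the odd-bracket relation $s(x+y)=s(x)+s(y)+[x,y]$ inside $c$ and expanding $(x+y)\cdot c(x+y)$ bilinearly, the characteristic-$2$ cancellations eliminate the diagonal terms $c(s(x))$, $c(s(y))$, $x\cdot c(x)$, $y\cdot c(y)$, leaving exactly $c([x,y])+x\cdot c(y)+y\cdot c(x)=d^1_\alpha c(x,y)$. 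Third, $\beta\circ\mathfrak{q}=\mathfrak{q}\circ\alpha$: rewriting $s(\alpha(x))=\alpha(s(x))$ by \eqref{multsq} together with $c\circ\alpha=\beta\circ c$, and applying the module identity to $\alpha(x)\cdot\beta(c(x))$, one collects $\mathfrak{q}(\alpha(x))=\beta(c(s(x)))+\beta(x\cdot c(x))=\beta(\mathfrak{q}(x))$. Fourth, $\beta\circ d^1_\alpha c=d^1_\alpha c\circ(\alpha\wedge\alpha)$: the bracket term transforms by \eqref{multiplicativity} combined with $\beta\circ c=c\circ\alpha$, while the two action terms transform by the module identity combined with $\beta\circ c=c\circ\alpha$.

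I expect the only genuinely delicate point to be the polarization identity, the single place where the squaring $s$, the odd bracket, and the characteristic-$2$ cancellations must all conspire, and which reflects the feature of this complex having no analogue in characteristic $\neq 2$. The remaining three checks are formal manipulations once the module identity and the two multiplicativity axioms are applied consistently.
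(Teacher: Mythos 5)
Your proof is correct and follows essentially the same route as the paper: the $\mathfrak{d}^0_\alpha$ check via $\beta(m)=m$ and the module identity, the compatibility $\beta\circ\mathfrak{q}=\mathfrak{q}\circ\alpha$ via \eqref{multsq}, and the polarization identity via $s(x+y)=s(x)+s(y)+[x,y]$ with the characteristic-$2$ cancellations are exactly the paper's computations. You are in fact slightly more complete than the paper, which explicitly verifies only the conditions on $\mathfrak{q}$ and leaves the homogeneity $\mathfrak{q}(\lambda x)=\lambda^2\mathfrak{q}(x)$ and the condition $\beta\circ d^1_\alpha c=d^1_\alpha c\circ(\alpha\wedge\alpha)$ unchecked; your versions of those routine verifications are correct.
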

\begin{proof} Let us first deal with ${\mathfrak d}^0_\alpha$. We have
\[
d^0_\alpha m(\alpha(x))=\alpha(x)\cdot m= \alpha(x)\cdot \beta(m)=\beta(x\cdot m)= \beta(d^0_\alpha m(x)).
\]
Therefore, Eq. \ref{cohalphabeta} is satisfied. Let us now deal deal with ${\mathfrak d}^1_\alpha$. We will only prove that $\mathfrak{q}$ satisfies Eq. \ref{cohalphabeta}. Indeed, 
\[
\begin{array}{lcl}
\mathfrak{q} (\alpha(x)) & = & c(s(\alpha(x)))+\alpha(x)\cdot c(\alpha(x))=c(\alpha(s(x)))+\alpha(x)\cdot \beta(c(x))\\[2mm]
& = & \beta(c(s(x)))+\beta(x\cdot c(x))=  \beta({\mathfrak q}(x)).
\end{array}
\]
On the other hand, we have
\[
\begin{array}{lcl}
\fq (x+y) + \fq (x) + \fq (y) & = & c( s(x+y)) + (x+y)\cdot c(x+y) +c(s(x))  \\[2mm]
&& + x \cdot c(x) + c(s(y))+ y \cdot c(y) \\[2mm]
& = & c([x,y]) + y \cdot c(x) + x \cdot c(y)=d^1_\alpha c(x,y).\qedhere
\end{array}
\]\end{proof}
A 1-cocycle $c$ on $\fg$ with values in a $\fg$-module $M$ must satisfy the following conditions:
\begin{eqnarray}
\label{Cond1alpha} x\cdot c(y)+y\cdot c(x)+c([x,y])&=& 0 \quad \text{for all $x,y\in \fg$},\\[2mm]
\label{Cond2alpha} x\cdot c(x)+c(s(x)) &=& 0\quad \text{for all $x\in \fg_\od$}.
\end{eqnarray}
The space of all 1-cocycles is denoted by $Z^1_\alpha(\fg;M)$.

Now, for $n \geq 2$ the {\it differential} ${\mathfrak d}^n_\alpha $ is given by \[
{\mathfrak d}^n:XC^n_\alpha (\fg,M)\rightarrow XC^{n+1}_\alpha(\fg,M) \quad (c,\mathfrak{p}) \mapsto (d^n_\alpha c, d^n_\alpha \mathfrak{p}), 
\] where 
\begin{align*}
& d^n_\alpha c(z_1,\ldots, z_{n+1})  =  \displaystyle \sum_{1 \leq i \leq n+1} \alpha^{n-1}(z_i) \cdot c(z_1, \ldots,\hat{z_i},\ldots,z_{n+1}) \\[4mm]
 &\displaystyle + \sum_{1 \leq i< j \leq n+1}c([z_i, z_j], \alpha(z_1), \ldots, \hat{z_i}, \ldots, \hat{z_j},\ldots \alpha(z_{n+1})),\\[2mm]
&d^n_\alpha {\mathfrak p}(x,z_1,\ldots,z_{n-1})  =   \displaystyle \alpha^{n-1}(x)\cdot c(x,z_1,\ldots, z_{n-1}) +c(s(x), \alpha(z_1), \ldots, \alpha(z_{n-1}))\\[3mm]
&\displaystyle +\sum_{1 \leq i \leq n-1}\alpha^{n-1}(z_i) \cdot {\mathfrak p}(x,z_1, \ldots, \hat{z_i}, \ldots, z_{n-1})\\[2mm]
&+ \displaystyle\sum_{1 \leq i \leq n-1} c([x,z_i],\alpha(x), \alpha(z_1), \ldots, \hat{z_i}, \ldots, \alpha(z_{n-1}))\\[3mm]
&\displaystyle + \sum_{1\leq i<j \leq n-1}{\mathfrak p}(\alpha(x), [z_i, z_j],\alpha(z_1),\ldots,\hat{z_i},\ldots, \hat{z_j},\ldots, \alpha(z_{n-1})).
\end{align*}
In particular for $n=2$, the {\it differential} is given by  \[
{\mathfrak d}^2_\alpha:XC^2_\alpha(\fg,M)\rightarrow XC^3_\alpha(\fg,M) \quad (c,{\mathfrak p}) \mapsto (d^2_\alpha c, d^2_\alpha  \mathfrak{p}), 
\] where 
\begin{equation*}
\label{diffsuper2alpha}
\begin{array}{lcl}
d^2_\alpha c(z_1,z_2,z_3) & = & \alpha(z_1)\cdot c(z_2,z_3)+c([z_1,z_2],\alpha(z_3))+ \circlearrowleft (z_1,z_2,z_3) \quad \text{for all $z_1,z_2,z_3\in \fg$};\\[1mm]
d^2_\alpha {\mathfrak p}(x,z_1) & = & \alpha(x)\cdot c(x,z_1)+ \alpha(z_1) \cdot {\mathfrak p}(x)+ c(s(x),\alpha(z_1))+ c([x,z_1],\alpha(x)) \\[2mm]
&& \text{for all $x\in \fg_\od,$ and for all $z_1\in \fg$}.
\end{array}
\end{equation*} 
A 2-cocycle is 2-tuple $(c,\mathfrak{p})$ satisfying the following conditions:
\begin{eqnarray}
\label{2-cocCond1alpha} 
0 &=& \alpha(z_3) \cdot c(z_1,z_2)+c([z_1,z_2],\alpha(z_3))+ \circlearrowleft (z_1,z_2,z_3) \quad \text{for all $z_1,z_2,z_3\in \fg$},\\[2mm]
\label{2-cocCond2alpha}  0 &=&  \alpha(x)\cdot c(x,z_1)+ \alpha(z_1) \cdot {\mathfrak p}(x)+ c(s(x),\alpha(z_1))+ c([x,z_1],\alpha(x)]\\[2mm]
\nonumber &&\text{for all $x\in \fg_\od$ and for all $z_1\in \fg$},
\end{eqnarray}
The first step here is to show that the map ${\mathfrak d}_\alpha^n$ is well defined, for every twist $\alpha$. By doing so, we give a proof to the first part of Theorem \ref{welldefId} in the case where $\alpha=\mathrm{id}$. 
\begin{Proposition} The map $\mathfrak{d}^n_\alpha$ is well-defined; namely, $\text{\rm{Im}}\,({\mathfrak d}^n_\alpha)\subseteq XC^{n+1}_\alpha (\fg, M)$.
\end{Proposition}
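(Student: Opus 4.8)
The plan is to verify that, for a cochain $(c,\mathfrak p)\in XC^n_\alpha(\fg,M)$, the pair $(d^n_\alpha c, d^n_\alpha\mathfrak p)$ meets the four requirements built into the definition \eqref{xcochains2} of $XC^{n+1}_\alpha(\fg,M)$: (a) $d^n_\alpha c$ is a well-defined alternating map on $\wedge^{n+1}\fg$; (b) $d^n_\alpha\mathfrak p$ is a map of the type \eqref{mapp}, i.e.\ quadratic in its odd slot and multilinear in the remaining ones; (c) both $d^n_\alpha c$ and $d^n_\alpha\mathfrak p$ obey the equivariance \eqref{cohalphabeta}; and (d) the polarization relation $d^n_\alpha\mathfrak p(x+y,z)+d^n_\alpha\mathfrak p(x,z)+d^n_\alpha\mathfrak p(y,z)=d^n_\alpha c(x,y,z)$ holds for all $x,y\in\fg_\od$ and $z\in\wedge^{n-1}\fg$.

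Conditions (a) and (b) are routine. For (b) I would scan the five summands of $d^n_\alpha\mathfrak p(x,z_1,\dots,z_{n-1})$ and check that each scales by $\lambda^2$ under $x\mapsto\lambda x$: the summand $\alpha^{n-1}(x)\cdot c(x,z)$ carries $x$ twice, the summand $c(s(x),\alpha(z))$ uses $s(\lambda x)=\lambda^2 s(x)$, the summand containing $\mathfrak p(x,\dots)$ uses property (i) of \eqref{mapp}, and the last two are respectively bilinear in the pair $\bigl([x,\cdot],\alpha(x)\bigr)$ and quadratic in $\alpha(x)$; multilinearity in the $z_i$ is immediate. For (a) I would run the usual Chevalley--Eilenberg argument that $d^n_\alpha c$ vanishes whenever two of its arguments coincide, which goes through because $c$ is alternating and the bracket is symmetric in characteristic $2$.

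For the equivariance (c) I would push $\beta$ through each summand, using the module axiom $\beta(x\cdot m)=\alpha(x)\cdot\beta(m)$ from Definition~\ref{defmodalpha}, the hypotheses $\beta\circ c=c\circ(\alpha\wedge\dots\wedge\alpha)$ and $\beta\circ\mathfrak p=\mathfrak p\circ(\alpha\wedge\dots\wedge\alpha)$, the multiplicativity $\alpha[\cdot,\cdot]=[\alpha(\cdot),\alpha(\cdot)]$ of \eqref{multiplicativity}, and $\alpha\circ s=s\circ\alpha$ of \eqref{multsq}. Summand by summand one gets, e.g., $\beta\bigl(\alpha^{n-1}(z_i)\cdot c(\dots)\bigr)=\alpha^{n}(z_i)\cdot c(\alpha(\dots))$ and $\beta\bigl(c([z_i,z_j],\dots)\bigr)=c([\alpha(z_i),\alpha(z_j)],\dots)$, so $\beta\circ d^n_\alpha c=d^n_\alpha c\circ(\alpha\wedge\dots\wedge\alpha)$; the only place the squaring relation \eqref{multsq} is needed is the $c(s(x),\dots)$ term of $d^n_\alpha\mathfrak p$, and the argument for $d^n_\alpha\mathfrak p$ is otherwise identical.

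The heart of the proof, and the step I expect to be the main obstacle, is the polarization identity (d). The plan is to expand $d^n_\alpha\mathfrak p(x+y,z)+d^n_\alpha\mathfrak p(x,z)+d^n_\alpha\mathfrak p(y,z)$ summand by summand, feeding in the bilinearity of the bracket, the squaring relation $s(x+y)=s(x)+s(y)+[x,y]$, and the defining polarization of the cochain itself, $\mathfrak p(u+v,\cdot)+\mathfrak p(u,\cdot)+\mathfrak p(v,\cdot)=c(u,v,\cdot)$. After the pure $x$- and pure $y$-contributions cancel in characteristic $2$, the five summands respectively yield $\alpha^{n-1}(x)\cdot c(y,z)+\alpha^{n-1}(y)\cdot c(x,z)$, then $c([x,y],\alpha(z))$, then $\sum_i\alpha^{n-1}(z_i)\cdot c(x,y,\dots\hat{z_i}\dots)$, then $\sum_i\bigl(c([x,z_i],\alpha(y),\dots)+c([y,z_i],\alpha(x),\dots)\bigr)$, and finally $\sum_{i<j}c(\alpha(x),\alpha(y),[z_i,z_j],\dots)$. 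I would then expand $d^n_\alpha c(x,y,z_1,\dots,z_{n-1})$ straight from its definition and match these five groups against the summands of $d^n_\alpha c$, sorted by whether the distinguished pair $\{x,y\}$ acts, forms a bracket, or sits among the spectators, using that $c$ is alternating so the reorderings cost no sign in characteristic $2$. The likeliest source of error is the index bookkeeping --- tracking which arguments carry an $\alpha$ and reconciling the $i<j$ sums against the full double sum in $d^n_\alpha c$ --- so I would lay the comparison out as a short table of term types rather than as one linear expansion.
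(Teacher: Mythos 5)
Your proposal is correct and follows essentially the same route as the paper: the paper's proof consists precisely of your step (d), expanding $d^n_\alpha\mathfrak p(x+y,z_1,\dots,z_{n-1})$ via $s(x+y)=s(x)+s(y)+[x,y]$ and the polarization $\mathfrak p(x+y,\cdot)+\mathfrak p(x,\cdot)+\mathfrak p(y,\cdot)=c(x,y,\cdot)$, and matching the five groups of cross-terms you list against $d^n_\alpha c(x,y,z_1,\dots,z_{n-1})$. The paper leaves your checks (a)--(c) implicit, so your version is, if anything, slightly more complete.
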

\begin{proof} For all $x,y \in \fg_\od$ and for all $z_1,\ldots,z_n\in \fg$, 
we have 
\begin{align*}
&d^n_\alpha {\mathfrak p}(x+y,z_1,\ldots,z_n)\\[2mm] 
&= 
\displaystyle \alpha^{n-1}(x+y)\cdot c(x+y,z_1,\ldots, z_{n-1})+\sum_{1 \leq i \leq n-1}\alpha^{n-1}(z_i) \cdot {\mathfrak p}(x+y,z_1, \ldots, \hat{z_i}, \ldots, z_{n-1})\\[3mm]
&+c(s(x+y), \alpha(z_1), \ldots, \alpha(z_{n-1})) + \displaystyle\sum_{1 \leq i \leq n-1} c([x+y,z_i],\alpha(x+y), \alpha(z_1), \ldots, \hat{z_i}, \ldots, \alpha(z_{n-1}))\\[2mm]
&\displaystyle + \sum_{1\leq i<j \leq n-1}{\mathfrak p}(\alpha(x+y), [z_i, z_j],\alpha(z_1),\ldots,\hat{z_i},\ldots, \hat{z_j},\ldots, \alpha(z_{n-1})).
\\
& = d^n_\alpha {\mathfrak p}(x,z_1,\ldots,z_n)+ d^n_\alpha {\mathfrak p}(y,z_1,\ldots,z_n)+
\alpha^{n-1}(x)\cdot c(y,z_1,\ldots, z_{n-1})\\[3mm]
&+\alpha^{n-1}(y)\cdot c(x,z_1,\ldots, z_{n-1})+\sum_{1 \leq i \leq n-1}\alpha^{n-1}(z_i) \cdot c(x,y,z_1, \ldots, \hat{z_i}, \ldots, z_{n-1})\\[3mm]
&+c([x,y], \alpha(z_1), \ldots, \alpha(z_{n-1})) + \displaystyle\sum_{1 \leq i \leq n-1} c([x,z_i],\alpha(y), \alpha(z_1), \ldots, \hat{z_i}, \ldots, \alpha(z_{n-1}))\\[3mm]
& + \displaystyle\sum_{1 \leq i \leq n-1} c([y,z_i],\alpha(x), \alpha(z_1), \ldots, \hat{z_i}, \ldots, \alpha(z_{n-1}))\\
&\displaystyle + \sum_{1\leq i<j \leq n-1}c (\alpha(x), \alpha(y), [z_i, z_j],\alpha(z_1),\ldots,\hat{z_i},\ldots, \hat{z_j},\ldots, \alpha(z_{n-1}))
\\
& = d^n_\alpha {\mathfrak p}(x,z_1,\ldots,z_n)+ d^n_\alpha {\mathfrak p}(y,z_1,\ldots,z_n)+d^n_\alpha c(x,y,z_1,\ldots, z_n),
\end{align*}
where we have used the fact  that $s(x+y)  =  s(x)+s(y)+[x,y]$ and  \[
\begin{array}{lcl}
{\mathfrak p}(x+y,z_1,\ldots,z_{n-1})+{\mathfrak p}(x,z_1, \ldots,z_{n-1})+{\mathfrak p}(y,z_1,\ldots,z_{n-1})&=&c(x,y,z_1,\ldots, z_{n-1}).\qedhere
\end{array}
\]
\end{proof}

\begin{Theorem}\label{d2=0}
For all $n\geq 1$, we have $\mathfrak{d}_\alpha^{n}\circ \mathfrak{d}_\alpha^{n-1}=0$.
Hence, the pair $(XC^*_\alpha(\fg,M),{\mathfrak d}_\alpha^*)$ defines a cohomology complex for Hom-Lie superalgebras in characteristic 2.
\end{Theorem}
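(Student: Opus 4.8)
The plan is to prove the identity componentwise. Since $\mathfrak{d}^n_\alpha$ acts on a cochain $(c,\mathfrak p)$ as $(d^n_\alpha c, d^n_\alpha \mathfrak p)$, the equality $\mathfrak{d}^{n+1}_\alpha\circ\mathfrak{d}^n_\alpha=0$ splits into the two statements $d^{n+1}_\alpha d^n_\alpha c=0$ for the multilinear part and $d^{n+1}_\alpha d^n_\alpha \mathfrak p=0$ for the squaring part, the latter being taken along the cochain $(d^n_\alpha c, d^n_\alpha \mathfrak p)$ so that it sees \emph{both} pieces. I would first dispose of the $c$-part, which is the Chevalley--Eilenberg differential twisted by powers of $\alpha$. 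Expanding $d^{n+1}_\alpha d^n_\alpha c$ and sorting the result, the terms fall into two families: the double-action terms, in which two elements $\alpha^{\bullet}(z_i),\alpha^{\bullet}(z_j)$ act successively on a value of $c$, and the nested-bracket terms $c([[z_i,z_j],\alpha(z_k)],\ldots)$. The first family cancels using the second representation axiom of \eqref{Hom-Lie-repr}, which relates the action of a bracket to the symmetrized double action; the second family cancels by the Hom-Jacobi identity $[\alpha(x),[y,z]]+\circlearrowleft=0$ together with multiplicativity \eqref{multiplicativity}. The precise placement of the powers $\alpha^{n-1}$ in front of the action terms and of the single $\alpha$ inside the bracket terms is exactly what makes these cancellations close up; \eqref{multiplicativity} is what lets one push $\alpha$ past brackets to line the terms up in characteristic $2$.

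The heart of the argument is the $\mathfrak p$-part. Here I would expand $d^{n+1}_\alpha(d^n_\alpha c, d^n_\alpha\mathfrak p)$ using the displayed formula for $d^{n+1}_\alpha\mathfrak p$, substituting $c\rightsquigarrow d^n_\alpha c$ and $\mathfrak p\rightsquigarrow d^n_\alpha \mathfrak p$. The genuinely new features, with no analogue in characteristic $\neq 2$, are the squaring term $c(s(x),\ldots)$ and the repeated-argument terms $c([x,z_i],\alpha(x),\ldots)$ and $\mathfrak p(\alpha(x),[z_i,z_j],\ldots)$. The two identities that drive the cancellations are the squaring Jacobi identity \eqref{JISH}, namely $[s(x),\alpha(y)]=[\alpha(x),[x,y]]$, which rewrites a bracket against a square as a doubly iterated bracket, and the third module axiom in \eqref{Hom-Lie-repr}, $[s(x),\beta(v)]_V=[\alpha(x),[x,v]_V]_V$, which does the same on $M$. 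When $d^n_\alpha \mathfrak p$ is differentiated a second time, the contributions issuing from $c(s(x),\alpha(z_1),\ldots)$ are processed by \eqref{JISH} and the module squaring axiom, and I expect them to match, term by term in characteristic $2$, the contributions obtained by differentiating the repeated-argument terms.

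To control the bookkeeping I would exploit the preceding Proposition, which shows that $\mathfrak{d}^n_\alpha$ preserves the compatibility $\mathfrak p(x+y,z)+\mathfrak p(x,z)+\mathfrak p(y,z)=c(x,y,z)$. Applied to the cochain $(d^n_\alpha c, d^n_\alpha\mathfrak p)$ and combined with the already-established $d^{n+1}_\alpha d^n_\alpha c=0$, this forces the polarization of $d^{n+1}_\alpha d^n_\alpha\mathfrak p$ in its odd argument to vanish identically. Hence it suffices to verify that the diagonal value $d^{n+1}_\alpha d^n_\alpha\mathfrak p(x,z_1,\ldots,z_n)$ is zero for a single odd $x$, and throughout that computation the compatibility relation may be used to interchange $\mathfrak p$-contributions and $c$-contributions whenever convenient, which is where the real simplification comes from.

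The main obstacle is precisely this diagonal computation: arranging the squaring-derived terms so that \eqref{JISH} and the module squaring axiom align them, with the $\alpha$-powers tracked correctly via \eqref{multiplicativity}. I would pin down the pattern by checking the low cases $n=1,2$ explicitly from \eqref{diffsuperalpha} and the displayed $n=2$ differential, and then carry out the general cancellation, organizing the terms by the positions of the repeated argument $x$ and of the squaring slot. Setting $\alpha=\mathrm{id}$ throughout recovers the ordinary Lie superalgebra case, which simultaneously supplies the proof of the first part of Theorem \ref{welldefId}.
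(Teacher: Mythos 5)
Your proposal follows essentially the same route as the paper: split $\mathfrak{d}_\alpha^{n}\circ\mathfrak{d}_\alpha^{n-1}$ into the multilinear part (handled by the Hom--Jacobi identity and the representation axioms, which the paper defers to \cite{AMS}) and the squaring part, then expand the latter and cancel in characteristic $2$ using \eqref{JISH}, the module squaring axiom and multiplicativity --- exactly the content of the paper's Lemma \ref{lem74} and its five-part expansion. Your extra observation that the polar form of $\mathfrak{d}_\alpha^{n}\mathfrak{d}_\alpha^{n-1}\mathfrak{p}$ already vanishes (it equals $\mathfrak{d}_\alpha^{n}\mathfrak{d}_\alpha^{n-1}c=0$ by the well-definedness proposition) is sound and tidies the bookkeeping, but be aware it only reduces you to checking the diagonal value for \emph{every} odd $x$ --- an additive map with $q(\lambda x)=\lambda^2 q(x)$ is determined on a basis, not at a single point --- and that diagonal computation is still the entirety of the paper's Parts 1--5.
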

In order to prove this theorem, we will need the following Lemma.
\begin{Lemma} \label{lem74} If $(c, {\mathfrak p}) \in XC^n_\alpha(\fg,M)$, then

\textup{(}i\textup{)} $\alpha^{n-1}(x) \cdot ( \alpha^{n-2}(x) \cdot c(z_1,\ldots,z_n))=\alpha^{n-2}(s(x))\cdot c(\alpha(z_1),\ldots, \alpha(z_n))$ for all $x\in \fg_\od$ and for all $z_1,\ldots,z_n\in \fg$.

\textup{(}ii\textup{)} $\alpha(z_i)\cdot (z_j\cdot c(z_1,\ldots, z_n))+\alpha(z_j)\cdot (z_i\cdot c(z_1,\ldots, z_n))=[z_i,z_j]\cdot c(\alpha(z_1), \ldots \alpha(z_n)) $ for all $z_1,\ldots z_n\in \fg$.
\end{Lemma}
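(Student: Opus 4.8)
The plan is to deduce both identities directly from the three module axioms of Definition~\ref{defmodalpha}, written in the action notation $x\cdot v:=[x,v]_V$, together with the cochain compatibility condition \eqref{cohalphabeta}, which in this notation reads $\beta(c(z_1,\ldots,z_n))=c(\alpha(z_1),\ldots,\alpha(z_n))$. Spelled out, the three axioms say that $\alpha(x)\cdot\beta(v)=\beta(x\cdot v)$; that $[x,y]\cdot\beta(v)=\alpha(x)\cdot(y\cdot v)+\alpha(y)\cdot(x\cdot v)$; and that $s(x)\cdot\beta(v)=\alpha(x)\cdot(x\cdot v)$ for all $x\in\fg_\od$. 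Neither part requires induction on $n$; each reduces to a single application of one axiom.

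For part (ii), I would set $v:=c(z_1,\ldots,z_n)$ and apply the second module axiom with $x=z_i$ and $y=z_j$, obtaining
\[
[z_i,z_j]\cdot\beta(c(z_1,\ldots,z_n))=\alpha(z_i)\cdot\big(z_j\cdot c(z_1,\ldots,z_n)\big)+\alpha(z_j)\cdot\big(z_i\cdot c(z_1,\ldots,z_n)\big).
\]
Replacing $\beta(c(z_1,\ldots,z_n))$ by $c(\alpha(z_1),\ldots,\alpha(z_n))$ via \eqref{cohalphabeta} gives exactly the asserted identity.

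For part (i), the one preliminary observation is that \eqref{multsq} iterates: from $\alpha(s(x))=s(\alpha(x))$ one gets $\alpha^k(s(x))=s(\alpha^k(x))$ for all $k$ by an immediate induction, and since $\alpha$ is even the element $\alpha^{n-2}(x)$ is again odd. The key idea is then to apply the third module axiom not to $x$ itself but to the odd element $\alpha^{n-2}(x)$, with $v:=c(z_1,\ldots,z_n)$:
\[
s\big(\alpha^{n-2}(x)\big)\cdot\beta(v)=\alpha\big(\alpha^{n-2}(x)\big)\cdot\big(\alpha^{n-2}(x)\cdot v\big)=\alpha^{n-1}(x)\cdot\big(\alpha^{n-2}(x)\cdot v\big).
\]
Rewriting the left-hand side using $s(\alpha^{n-2}(x))=\alpha^{n-2}(s(x))$ and $\beta(v)=c(\alpha(z_1),\ldots,\alpha(z_n))$ turns it into $\alpha^{n-2}(s(x))\cdot c(\alpha(z_1),\ldots,\alpha(z_n))$, which is precisely the claim.

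Since each identity collapses to a single module axiom, I do not expect a serious obstacle. The only point demanding care is the bookkeeping of the exponents of $\alpha$ in part (i): choosing the substitution $x\mapsto\alpha^{n-2}(x)$ in the third axiom is exactly what makes the powers $\alpha^{n-1}$ and $\alpha^{n-2}$ appear in the right places and match the $\alpha^{n-2}(s(x))$ on the other side.
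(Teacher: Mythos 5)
Your proposal is correct and follows essentially the same route as the paper: for part (i) the paper likewise applies the squaring axiom of Definition~\ref{defmodalpha} to the odd element $\alpha^{n-2}(x)$ and then uses $s(\alpha^{n-2}(x))=\alpha^{n-2}(s(x))$ together with $\beta\circ c=c\circ(\alpha\wedge\cdots\wedge\alpha)$. The paper omits the proof of part (ii), and your one-line derivation from the second module axiom is exactly the intended argument.
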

\begin{proof}
Let us only prove Part (i). Using the fact that $\beta \circ c =c \circ (\alpha\wedge \cdots \wedge \alpha)$ we get
\begin{align*}
&\alpha^{n-1}(x) \cdot ( \alpha^{n-2}(x) \cdot c(z_1,\ldots,z_n))=  s(\alpha^{n-2}(x)) \cdot \beta (c(z_1,\ldots, z_n))\\[2mm]
&= \alpha^{n-2}(s(x)) \cdot  (c(\alpha(z_1),\ldots, \alpha(z_n))).\qedhere
\end{align*} \end{proof}
\begin{proof}[Proof of Theorem \ref{d2=0}] Lert us first show that $\mathfrak{d}^1_\alpha \circ \mathfrak{d}^0_\alpha=0$. Indeed, for all $x,y \in \fg$ and $m\in XC_\alpha ^{0}(\fg;M)$ we have 
\[
\begin{array}{lcl}
d^1_\alpha \circ d^0_\alpha m (x,y)&=&x \cdot d^0_\alpha m(y)+ y \cdot d^0_\alpha m(x)+ d^0_\alpha m ([x,y])=x \cdot (y \cdot m)+ y \cdot (x \cdot m)+[x,y]\cdot m\\[1mm]
&=&\alpha(x) \cdot (y \cdot m)+\alpha(y) \cdot (x \cdot m)+ [x,y]\cdot \beta(m)=0.
\end{array}
\]
On the other hand, for all $x\in \fg_\od$ and $m\in XC_\alpha ^{0}(\fg;M)$, we have 
\[
{\mathfrak q}(x)=d^0_\alpha m(s(x))+ x \cdot d^0_\alpha m(x)= s(x)\cdot m+ x\cdot (x \cdot m)= s(x)\cdot \beta(m)+\alpha(x)\cdot (x \cdot m)=0.
\]
Let us now show that $\mathfrak{d}^n_\alpha \circ \mathfrak{d}^{n-1}_\alpha=0$ for all $n>1$.  To show that $d^n_\alpha \circ d^{n-1}_\alpha (c)=0$ is a routine, see for instance \cite{AMS}. Let us show that $d^n_\alpha \circ d^{n-1}_\alpha ({\mathfrak p})=0$. This would imply that ${\mathfrak d}^n_\alpha \circ {\mathfrak d}^{n-1}_\alpha(c, {\mathfrak p})=0$. Actually, the computation is very cumbersome so we will break it into small pieces. First, we compute:
\begin{align*}
& d^n_\alpha \circ d^{n-1}_\alpha ({\mathfrak p})(x,z_1,\ldots,z_{n-1}) =\alpha^{n-1}(x)\cdot d^{n-1}_\alpha  c(x, z_1, \ldots, z_{n-1})\\[2mm]
&+\sum_{i=1}^{n-1}\alpha^{n-1}(z_i) d^{n-1}_\alpha {\mathfrak p} (x,z_1,\ldots, \widehat{z_i},\ldots, z_{n-1})+ d^{n-1}_\alpha c(s(x), \alpha(z_1), \ldots, \alpha(z_{n-1}))\\[2mm]
&+ 
\sum_{i=1}^{n-1} d^{n-1}_\alpha c ([x,z_i], \alpha(x),\alpha(z_1),\ldots, \widehat{\alpha(z_i)}, \ldots, \alpha(z_{n-1}))\\[2mm]
& + \sum_{1\leq i<j\leq n-1} d^{n-1}_\alpha  {\mathfrak p}(\alpha(x), [z_i, z_j], \alpha(z_1), \ldots \widehat{\alpha(z_i)}, \ldots, \widehat{\alpha(z_j)}, \ldots, \alpha(z_{n-1})). 
\end{align*}
There are five terms in the expression above. We will compute each term  separately.
\begin{small}
\begin{align*}
&\text{\underline{Part 1: }}\sum_{1\leq i \leq n-1} \alpha^{n-1}_\alpha (z_i) d^{n-1} {\mathfrak p}(x,z_1,\ldots, \widehat{z_i},\ldots,z_{n-1})=\\[2mm]
&+\sum_{1\leq i \leq n-1} \alpha^{n-1}(z_i) \cdot [ \alpha^{n-2}(x)\cdot c(x,z_1,\ldots,\widehat{z_i}, \ldots, z_{n-1})\\[2mm] &+\sum_{1\leq j\leq n-2} \alpha^{n-2}(z_j) {\mathfrak p} (x,z_1,\ldots, \widehat{z_i},\ldots, \widehat{z_j},\ldots, z_{n-1})+ c(s(x), \alpha(z_1),\ldots, \widehat{\alpha(z_i)},\ldots, \alpha(z_{n-1}))\\[2mm]
&+ \sum_{1\leq j \leq n-2}c([x,z_i], \alpha(x), \alpha(z_1), \ldots, \widehat{\alpha(z_i)},\ldots, \widehat{\alpha(z_j)},\ldots,\alpha(z_{n-1}))\\[2mm]
& + \sum_{1\leq l < j \leq n-2} {\mathfrak p} (\alpha(x), [z_l,z_j], \alpha(z_1), \ldots,\widehat{\alpha(z_l)}, \ldots, \widehat{\alpha(z_j)}, \ldots, \widehat{\alpha(z_i)},\ldots, \alpha(z_{n-1}))].
\end{align*}
\begin{align*}
  & \text{\underline{Part 2: }}  \alpha^{n-1}(x)d^{n-1}_\alpha  c(x, z_1, \ldots, z_{n-1}) = \alpha^{n-1}(x) \cdot [ \alpha^{n-2}(x) c (z_1, \ldots, z_{n-1}) \\[2mm]
  &+ \sum_{1\leq i \leq n-1} \alpha^{n-2}(z_i) c(x,z_1, \ldots, \widehat{z_i}, \ldots, z_{n-1}) +
  \sum_{1\leq i \leq n-1} c([x,z_i], \alpha(z_1), \ldots, \widehat{\alpha(z_i)}, \ldots, \alpha(z_{n-1}))\\[2mm]
 &+ \sum_{1\leq i < j \leq n-1} c([z_i, z_j], \alpha(x), \alpha(z_1),\ldots, \widehat{\alpha(z_i)},\ldots,  \widehat{\alpha(z_j)},\ldots, \alpha(z_{n-1}))].
\end{align*}
\begin{align*}
& \text{\underline{Part 3: }}d^{n-1}_\alpha c(s(x), \alpha(z_1),\ldots, \alpha(z_{n-1}))=  \alpha^{n-2}(s(x))\cdot c(\alpha(z_1),\ldots,\alpha(z_{n-1}))
\\[2mm]
&+\sum_{1\leq i \leq n-1} \alpha^{n-1}(z_i)\cdot c(s(x), \alpha(z_1), \ldots, \widehat{\alpha(z_i)},\ldots, \alpha(z_{n-1}))\\[2mm]
&\sum_{1\leq i < j \leq n-1} c([\alpha(z_i), \alpha(z_j)], \alpha(s(x)), \alpha^2(z_1), \ldots, \widehat{\alpha^2(z_i)}, \ldots, \widehat{\alpha^2(z_j)},\ldots, \alpha^2(z_{n-1})) \\[2mm]
&+ \sum_{1\leq i \leq n-1} c([s(x), \alpha(z_i)], \alpha^2(z_1), \ldots, \widehat{\alpha^2(z_i)}, \ldots, \alpha^2(z_{n-1})).
\end{align*}
\begin{align*}
&\text{\underline{Part 4: }}\sum_{1\leq i \leq n-1} d^{n-1}_\alpha c ([x,z_i], \alpha(x), \alpha(z_1),\ldots, \widehat{\alpha(z_i)}, \ldots, \alpha(z_{n-1}))\\[2mm]
&=\sum_{1\leq i \leq n-1} [\alpha^{n-2}([x,z_i])\cdot c(\alpha(x), \alpha(z_1), \ldots, \widehat{\alpha(z_i)},\ldots, \alpha(z_{n-1}))\\[2mm]
&+\alpha^{n-1}(x)\cdot c([x,z_i], \alpha(z_1),\ldots, \widehat{\alpha(z_i)}, \ldots, \alpha(z_{n-1}))\\[2mm]
&+ \sum_{1\leq l \leq n-2} \alpha^{n-1}(z_l)\cdot c([x,z_i], \alpha(x), \alpha(z_1), \ldots, \widehat{\alpha(z_l)},\ldots,\widehat{\alpha(z_i)},\dots, \alpha(z_{n-1}))\\[2mm]
&+c([[x,z_i],\alpha(x)], \alpha^2(z_1), \ldots, \widehat{\alpha^2(z_i)}, \ldots, \alpha^2(z_{n-1}))\\[2mm]
&+ \sum_{1\leq j \leq n-1} c([[x,z_i], \alpha(z_j)], \alpha^2(x), \alpha^2(z_1), \ldots, \widehat{\alpha^2(z_i)},\ldots,  \widehat{\alpha^2(z_j)},\ldots  \alpha^2(z_{n-1}))\\[2mm]
&+ \sum_{1\leq l \leq n-1} c([\alpha(x), \alpha(z_l)], \alpha([x,z_i]), \alpha^2(z_1), \ldots, \widehat{\alpha^2(z_i)},\ldots,  \widehat{\alpha^2(z_l)},\ldots  \alpha^2(z_{n-1}))\\[2mm]
&+ \sum_{1\leq u <v \leq n-1} c(\alpha([x,z_i]),\alpha^2(x), [\alpha(z_u), \alpha(z_v)], \alpha^2(z_1), \ldots, \widehat{\alpha^2(z_i)},\ldots, \widehat{\alpha^2(z_u)},\ldots  \widehat{\alpha^2(z_v)},\ldots  \alpha^2(z_{n-1}))].
\\
  & \text{\underline{Part 5 :} }\sum_{1\leq i <j \leq n-1} d^{n-1}_\alpha  {\mathfrak p}(\alpha(x), [z_i, z_j], \alpha(z_1), \ldots, \widehat{\alpha(z_i)}, \ldots, \widehat{\alpha(z_j)}, \ldots, \alpha(z_{n-1}))\\[2mm]
  &= \sum_{1\leq i <j \leq n-1}  [ \alpha^{n-1}(x)\cdot c(\alpha(x), [z_i, z_j] , \alpha(z_1), \ldots, \widehat{\alpha(z_i)},\ldots, \widehat{\alpha(z_j)},\ldots, \alpha(z_{n-1}))\\[2mm]
  &+\sum_{1\leq l \leq n-1} \alpha^{n-1}(z_l) {\mathfrak p}(\alpha(x), [z_i, z_j], \alpha(z_1), \ldots, \widehat{\alpha(z_i)}, \ldots, \widehat{\alpha(z_j)}, \ldots,\widehat{\alpha(z_l)}, \ldots, \alpha(z_{n-1}))\\[2mm]
  &+ \alpha^{n-2}([z_i, z_j])\cdot {\mathfrak p}(x, \alpha(z_1), \ldots, \widehat{\alpha(z_i)}, \ldots, \widehat{\alpha(z_j)}, \ldots, \alpha(z_{n-1}))\\[2mm]
  &+ c(s(\alpha(x)), \alpha([z_i, z_j]), \alpha^2(z_1),\ldots, \widehat{\alpha(z_i)}, \ldots, \widehat{\alpha(z_j)}, \ldots, \alpha(z_{n-1}))\\[2mm]
  &\sum_{1\leq l \leq n-1} c([\alpha(x), \alpha(z_l)], \alpha([z_i, z_j]), \alpha^2(x), \alpha^2(z_1),\ldots, \widehat{\alpha^2(z_i)}, \ldots, \widehat{\alpha^2(z_j)}, \ldots, \alpha^2(z_{n-1}))\\[2mm]
  &+ c([\alpha(x), [z_i, z_j]], \alpha^2(z_1), \ldots\widehat{\alpha^2(z_i)}, \ldots, \widehat{\alpha^2(z_j)}, \ldots, \widehat{\alpha^2(z_l)},\ldots, \alpha^2(z_{n-1}))\\[2mm]
  &+\sum_{1\leq l \leq n-1}{\mathfrak p}(\alpha^2(x), [\alpha(z_l), [z_i, z_j]], \alpha^2(z_1), \ldots\widehat{\alpha^2(z_i)}, \ldots, \widehat{\alpha^2(z_j)}, \ldots,\widehat{\alpha^2(z_l)}, \ldots \alpha^2(z_{n-1}))\\[2mm]
  &+\sum_{1\leq u<v \leq n-1} T_{ijuv}],
\end{align*}
\end{small}
where 
\begin{small}
\[
T_{ijuv}={\mathfrak p}(\alpha^2(x), \alpha([z_i, z_j]),[\alpha(z_u), \alpha(z_v)], \alpha^2(z_1), \ldots\widehat{\alpha^2(z_i)}, \ldots, \widehat{\alpha^2(z_j)}, \ldots,\widehat{\alpha^2(z_u)}, \ldots,\widehat{\alpha^2(z_v)},
\ldots).
\]
\end{small}
Now, using Lemma \ref{lem74} a direct computation shows that
\[
\text{Part 1+Part 2+Part 3+Part 4+Part 5}=0.\qedhere
\] \end{proof}
Now, we are ready to define a cohomology of Hom-Lie superalgebras in characteristic 2. The kernel of the map  ${\mathfrak d}^n_\alpha$, denoted by $Z^n_\alpha(\fg;M)$,  is the space of $n$-cocycles. The range of the map ${\mathfrak d}^{n-1}_\alpha$, denoted by $B^n_\alpha(\fg;M)$, is the space of coboundaries. 

We define the $n^{th}$ cohomology space as 
\[
{\mathrm H}^n_\alpha(\fg;M):=Z^n_\alpha(\fg;M)/B^n_\alpha(\fg;M).
\]
\begin{Remark}{\rm 
    The cohomology defined above coincides when $\alpha=\mathrm{id}_\fg$ and $\beta=\mathrm{id}_M$, with  the cohomology of Lie superalgebras in characteristic 2 defined in the previous section.}
\end{Remark}

\begin{Example}{\rm  We compute the second cohomology of the Hom-Lie superalgebra $\mathfrak{oo}_{I\Pi}^{(1)}(1|2)_\alpha$ defined in Example \ref{examoo}.  We will assume here that the field $\mathbb K$ is infinite.

(i) The cohomology space $\mathrm{H}^2_\alpha(\mathfrak{oo}_{I\Pi}^{(1)}(1|2)_\alpha;\mathbb{K})$ is trivial. Recall that in this case the map $\beta=\mathrm{Id}$.

Let us first show that cocycles of  the form $(0,\mathfrak{p})$ are necessarily trivial. In fact, the condition $\mathfrak{p}=\mathfrak{p}\circ \alpha$ and $\varepsilon \neq 0$ implie that 
\[
{\mathfrak p}(x_1)=0 \quad \text{and} \quad  {\mathfrak p}(y_1)=m \; \text{ (arbitrary)}.
\]
Choose $B_m=m \, y_2^*$, where $m\in \mathbb{K}$. A direct computation shows that $d^2_\alpha B_m=0$. Let us compute the corresponding ${\mathfrak q}_m$. Indeed, 
\[
{\mathfrak q}_m(x_1)=B_m(s(x_1))=B_m(x_2)=0,
\]
and 
\[
{\mathfrak q}_m(y_1)=B_m(s(y_1))=B_m(
\varepsilon h_1+ \varepsilon^2 x_2+ y_2)=m .
\]
It follows that $(0,\mathfrak{p})= (d^2_\alpha B_m, \mathfrak{q}_m)$ and hence its cohomlogy class is trivial.

Let us now describe 2-cocycles of the form $(c,\mathfrak{p})$. A direct computation shows that 
\[
c_1=x_1^*\wedge y_1^*+ x_2^*\wedge y_2^*, \quad c_2=h_1^*\wedge y_1^*+x_1^*\wedge y_2^*,
\]
are the only cochains verifying both conditions $c_i\circ (\alpha \wedge \alpha)=c_i$ and $d_\alpha^2c_i=0$ for $i=1,2$. Let us describe the corresponding $\mathfrak{p}$'s. We have 
\[
\begin{array}{lcllcl}
\mathfrak{p}_1(x_1)&=& \varepsilon^{-1}, & \mathfrak{p}_1(y_1) &= & m_1 \quad  \text{(arbitrary)} \\[2mm]
\mathfrak{p}_2(x_1) &= & 0 & \mathfrak{p}_2(y_1) & = & m_2 \quad  \text{(arbitrary)}
\end{array}
\]
We then get that $\mathfrak{d}^2_\alpha(c_1, \mathfrak{p}_1)=
\mathfrak{d}^2_\alpha(c_2,\mathfrak{p}_2)=0$.

Let us now describe the coboundaries. Choose $b_1=h_1^*+\varepsilon^{-1}x_2^*$. It follows that 
\[
d^1_\alpha b_1= x_1^*\wedge y_1^*+ x_2^*\wedge y_2^*.
\]
Now,
\[
\mathfrak{q}_1(x_1)=b_1(s(x_1))=b_1(x_2)=
\varepsilon^{-1},
\]
and
\[
\mathfrak{q}_1(y_1)=b_1(s(y_1))=b_1
(\varepsilon h+ \varepsilon^2 x_2 + y_2)=0.
\]
Choose $b_2=y_1^*$. A direct computation shows that 
\[
d^1_\alpha b_2=h_1^*\wedge y_1^*+ x_1^*\wedge y_2^*, \quad \text{and $\mathfrak{q}_2\equiv 0$. }
\]
It follows that 
\[
(c_1, \mathfrak{p}_1)=(d^2_\alpha b_1, \mathfrak{q}_1)+ (d^2_\alpha B_ {m_1}=0,\mathfrak{q}_{m_1}), \quad \text{ and } \quad (c_2,\mathfrak{p}_2)=(d^2_\alpha b_2,\mathfrak{q}_2)+
(d^2_\alpha B_{m_2}=0,\mathfrak{q}_{m_2}).
\]
Therefore, the cohomology space $\mathrm{H}^2_\alpha(\mathfrak{oo}_{I\Pi}^{(1)}(1|2)_\alpha; \mathbb{K})$ is trivial.

(ii) Let us now compute the cohomology space: $\mathrm{H}^2_\alpha(\mathfrak{oo}_{I\Pi}^{(1)}(1|2)_\alpha; \mathfrak{oo}_{I\Pi}^{(1)}(1|2)_\alpha)$. Recall that in the case where  $\alpha=\mathrm{Id}$, this cohomology space has only two non-trivial 2-cocycles.

\underline{The case where $\varepsilon \not =1$}: the space $\mathrm{H}^2_\alpha(\mathfrak{oo}_{I\Pi}^{(1)}(1|2)_\alpha; \mathfrak{oo}_{I\Pi}^{(1)}(1|2)_\alpha$ is generated by the  non-trivial 2-cocycles:
\[
(c_4, \mathfrak{p}_4), \quad (c_5, \mathfrak{p}_5), \quad (c_9, \mathfrak{p}_9), \quad (c_{10}, \mathfrak{p}_{10}), \quad (c_{11}, \mathfrak{p}_{11}),\quad (c_{12}, \mathfrak{p}_{12}),
\]
where 
\[
\begin{array}{lcl}
c_4 & = & \varepsilon x_1 \otimes h_1^*\wedge x_1^*+\varepsilon^2 x_1\otimes h_1^*\wedge y_1^*+ x_1 \otimes x_1^*\wedge x_2^*+ \varepsilon^2 x_1 \otimes x_1^*\wedge y_2^*+\varepsilon^2 x_2 \otimes x_1^*\wedge y_1^* \\[2mm]
&& \varepsilon^2x_2\otimes x_2^*\wedge y_2^*+ \varepsilon y_1 \otimes h_1^*\wedge y_1^*+ y_1\otimes x_2^*\wedge y_1^*,\\[2mm]
c_5 & = & h_1\otimes h_1^*\wedge y_2^*+ x_1\otimes h_1^* \wedge y_1^*+ x_1\otimes x_1^*\wedge y_2^*,\\[2mm] 
c_9 & = & h_1 \otimes h_1^*\wedge x_1^*+ \varepsilon h_1 \otimes h_1^*\wedge y_1^*+ \varepsilon x_1 \otimes x_1^*\wedge y_1^*+ \varepsilon x_2 \otimes h_1^*\wedge x_1^*+ \varepsilon^2 x_2 \otimes h_1^*\wedge y_1^*\\[2mm]
&&+ \varepsilon^2 x_2\otimes x_1^*\wedge y_2^*+ \varepsilon x_2 \otimes x_2^*\wedge y_1^*+y_2\otimes h_1^*\wedge y_1^*+ y_2\otimes x_1^*\wedge y_2^*,\\[2mm]
c_{10} & = & h_1\otimes x_1^*\wedge y_1^*,\\[2mm]
c_{11} & = & h_1\otimes x_2^*\wedge y_2^*+x_1\otimes x_1^*\wedge y_2^*+ x_1\otimes x_2^*\wedge y_1^*+ \varepsilon x_2\otimes x_1^*\wedge y_1^*+ \varepsilon x_2\otimes x_2^*\wedge y_2^*,\\[2mm]
c_{12} & = & x_1\otimes y_1^*\wedge x_1^*+ \varepsilon x_1 \otimes h_1^*\wedge y_1^*+ \varepsilon x_2 \otimes x_1^*\wedge y_1^*+ \varepsilon x_2\otimes x_2^*\wedge y_2^*\\[2mm]
&&+ y_1\otimes h_1^*\wedge y_1^*+ y_1 \otimes x_1^*\wedge y_2^*,
\end{array}
\]
and
\[
\begin{array}{lcllcl}
\mathfrak{p}_4(x_1) & = & h_1, & \mathfrak{p}_4(y_1) & = & \varepsilon^2 h_1 + \varepsilon^3 x_2+ \varepsilon y_2,\\[2mm]
\mathfrak{p}_5(x_1) & = & 0, & \mathfrak{p}_5(y_1) & = &0,\\[2mm]
\mathfrak{p}_9(x_1) & = & x_1, & \mathfrak{p}_9(y_1) & = &0,\\[2mm]
\mathfrak{p}_{10}(x_1) & = & x_2, & \mathfrak{p}_{10}(y_1) & = & \varepsilon h_1+  \varepsilon^2 x_2+y_2 ,\\[2mm]
\mathfrak{p}_{11}(x_1) & = & x_2, & \mathfrak{p}_{11}(y_1) & = & 0,\\[2mm]
\mathfrak{p}_{12}(x_1) & = & x_2, & \mathfrak{p}_{12}(y_1) & = & 0.
\end{array}
\]
\underline{The case where $\varepsilon=1$}: the space $\mathrm{H}^2_\alpha(\mathfrak{oo}_{I\Pi}^{(1)}(1|2)_\alpha; \mathfrak{oo}_{I\Pi}^{(1)}(1|2)_\alpha)$ is generated by the non-trivial 2-cocycles: 
\[
(c_4, \mathfrak{p}_4), \quad (c_5, \mathfrak{p}_5), \quad (c_{10}, \mathfrak{p}_{10}), \quad (c_{11}, \mathfrak{p}_{11}).
\]
}
\end{Example}

\subsection{Deformations of Hom-Lie superalgebras} The deformation theory of Hom-Lie superalgebras in characteristic 2 will be discussed here. As a result, we also cover the Lie case, namely $\alpha=\mathrm{Id}_\fg$. Over a field of characteristic zero, the study has been carried out in \cite{AMS, TR}. 

Let $(\fg,[\cdot, \cdot],s,\alpha)$ be a Hom-Lie superalgebra over a field $\mathbb K$ of characteristic 2. A deformation of $\fg$ is a family of Hom-Lie superalgebras $\fg_t$ specializing in $\fg$ when the parameter $t$ equals $0$ and  where the Hom-Lie superalgebra structure is defined on the tensor product $\fg \otimes {\mathbb K}[[t]]$ when $\fg$ is finite dimensional. 
The bracket in the deformed Hom-Lie superalgebra $\fg_t$ is a $\K[[t]]$-bilinear map of the form (for all $x,y\in \fg$):
 \[
 \begin{array}{lcl}
  \displaystyle  [x,y]_t & = & \displaystyle [x,y]+ \sum_{i\geq 1} c_i(x,y)t^i,
 \end{array}
 \]
 while the squaring $s_t$, with respect to $\K[[t]]$, on the Hom-Lie superalgebra $\fg_t$ is   given by (for all $x\in \fg_\od$):
 \[
 \begin{array}{lcl}
\displaystyle    s_t(x)  & =  & \displaystyle s(x)+   \sum_{i\geq 1} {\mathfrak p}_i(x)t^i,
 \end{array}
 \]
 where $(c_i, \mathfrak{p}_i) \in XC^2_{\alpha,\ev}(\fg;\fg)$ for all $i\geq 1$. We will assume that $c_0(\cdot,\cdot)=[\cdot, \cdot]$ and ${\mathfrak p}_0(\cdot)=s(\cdot)$.
 
 According to deformation theory, we call a deformation {\it  infinitesimal} if the bracket $[\cdot, \cdot]_t$ and the squaring $s_t(\cdot)$ define a Hom-Lie superalgebra structure$\mod (t^2)$ (degree~1), that is $[\cdot,\cdot]_t  = [\cdot,\cdot]+  c_1(\cdot,\cdot)t$ and $s_t(\cdot)   = s(\cdot)+    {\mathfrak p}_1(\cdot)t$.  A deformation is said to be of    {\it  order} $n$ if the bracket $[\cdot, \cdot]_t$ and the squaring $s_t(\cdot)$ define a Hom-Lie superalgebra structure$\mod (t^{n+1})$, that is 
 \[
  [\cdot,\cdot]_t  =  [\cdot,\cdot]+ \sum_{1\leq i\leq n}  c_i(\cdot,\cdot)t^i
\text{ and } s_t(\cdot)   =   s(\cdot)+   \sum_{1\leq i\leq n} {\mathfrak p}_i(\cdot)t^i.
 \]
 Afterwards, one tries to extend a deformation of order $n$ to a  deformation of order $n+1$. All obstructions are cohomological, as we will see.

 \begin{Theorem}\label{thmdefor}
 Let  $(\fg, [\cdot, \cdot], s, \alpha)$ be  a Hom-Lie superalgebra in characteristic 2 and $(\fg_t, [\cdot, \cdot]_t, s_t, \alpha)$ be a deformation. Assume that $(c_1, \mathfrak{p}_1)\neq (0,0)$.  Then 
 
 \textup{(}i\textup{) }$(c_1, \mathfrak{p}_1)$ is a 2-cocycle, i.e. $(c_1, \mathfrak{p}_1)  \in Z_\alpha^2(\fg; \fg)$.

\textup{(}ii\textup{) } For $n>1$, consider the following maps:
\[
\begin{array}{lcl}
 C_n(x,y,z) & := & \displaystyle \sum_{\substack{i+j=n\\i,j<n}} c_i(c_j(x,y),\alpha(z))+\circlearrowleft (x,y,z), \quad \text{for all $x,y, z\in \fg$,}
\\[5mm]
Q_n(x,y) & := &\displaystyle \sum_{\substack{i+j=n\\i,j<n}} c_i(\mathfrak{p}_j(x),\alpha(y)) +
\displaystyle \sum_{\substack{i+j=n\\i,j<n}} c_i(c_j(x,y),\alpha(x)) , \; \; \text{for all $x\in \fg_\od$ and $y\in \fg$.}
\end{array}
\]
A  deformation of order $n-1$ can be extended to a deformation of order $n$ if and only there exists $(c_n, \mathfrak{p}_n)$
\[
(C_n, Q_n)= \mathfrak{d}_\alpha^2(c_n, \mathfrak{p}_n).
\]
 \end{Theorem}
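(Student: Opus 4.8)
The plan is to check directly that the family $(\fg_t,[\cdot,\cdot]_t,s_t,\alpha)$ is a Hom-Lie superalgebra over $\K[[t]]$, by expanding each axiom of Definition~\ref{MainDef} as a power series in $t$ and reading off the coefficient of each $t^n$. Since the twist $\alpha$ is not deformed, the multiplicativity axioms \eqref{multiplicativity} and \eqref{multsq} become, order by order, the identities $\alpha\circ c_i=c_i\circ(\alpha\wedge\alpha)$ and $\alpha\circ\mathfrak{p}_i=\mathfrak{p}_i\circ\alpha$, while the odd-polarization condition (iii) becomes $\mathfrak{p}_i(x+y)+\mathfrak{p}_i(x)+\mathfrak{p}_i(y)=c_i(x,y)$; these are precisely the conditions that each pair $(c_i,\mathfrak{p}_i)$ belong to $XC^2_{\alpha,\ev}(\fg;\fg)$, and are therefore already part of the definition of a deformation. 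Consequently the only substantive axioms left to impose are the even Hom-Jacobi identity and the squaring-Jacobi identity \eqref{JISH}, both read in the adjoint module $M=\fg$ with $\beta=\alpha$, so that $x\cdot m=[x,m]$; by the Remark following Definition~\ref{MainDef}, the remaining mixed Jacobi identities follow from these two.

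First I would expand the Hom-Jacobi identity $[\alpha(x),[y,z]_t]_t+\circlearrowleft(x,y,z)=0$, whose coefficient of $t^n$ is $\sum_{i+j=n}c_i(\alpha(x),c_j(y,z))+\circlearrowleft=0$ (with $c_0=[\cdot,\cdot]$). I would split off the two summands having $i=0$ or $j=0$: using the symmetry of the bracket and a cyclic relabelling, these assemble exactly into $d^2_\alpha c_n(x,y,z)=[\alpha(x),c_n(y,z)]+c_n([x,y],\alpha(z))+\circlearrowleft$, whereas the summands with $1\le i,j\le n-1$ are exactly $C_n(x,y,z)$. Hence the order-$t^n$ Jacobi condition is $d^2_\alpha c_n+C_n=0$, i.e. $C_n=d^2_\alpha c_n$.

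Next I would expand \eqref{JISH}, $[s_t(x),\alpha(y)]_t=[\alpha(x),[x,y]_t]_t$, whose coefficient of $t^n$ is $\sum_{i+k=n}c_i(\mathfrak{p}_k(x),\alpha(y))=\sum_{i+j=n}c_i(\alpha(x),c_j(x,y))$ (with $\mathfrak{p}_0=s$). Isolating the summands that carry $c_n$ or $\mathfrak{p}_n$, namely those with one index equal to $0$, and again using bracket-symmetry, these combine into $d^2_\alpha\mathfrak{p}_n(x,y)=[\alpha(x),c_n(x,y)]+[\alpha(y),\mathfrak{p}_n(x)]+c_n(s(x),\alpha(y))+c_n([x,y],\alpha(x))$, while the summands with both indices $\ge 1$ are precisely $Q_n(x,y)$. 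Thus the order-$t^n$ part of \eqref{JISH} is $d^2_\alpha\mathfrak{p}_n+Q_n=0$. Combining the two computations, the complete set of order-$t^n$ conditions on the deformation is $(C_n,Q_n)=\mathfrak{d}^2_\alpha(c_n,\mathfrak{p}_n)$.

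For $n=1$ there are no index pairs with $i,j\ge 1$, so $C_1=Q_1=0$ and the order-$t^1$ conditions collapse to $\mathfrak{d}^2_\alpha(c_1,\mathfrak{p}_1)=0$, which is part (i). For $n>1$, a deformation of order $n-1$ satisfies all axioms modulo $t^n$, and extending it to order $n$ means exactly choosing $(c_n,\mathfrak{p}_n)$ that kills the order-$t^n$ coefficient; since $(C_n,Q_n)$ is determined solely by the already fixed data $(c_i,\mathfrak{p}_i)_{i<n}$, such a choice exists if and only if $(C_n,Q_n)=\mathfrak{d}^2_\alpha(c_n,\mathfrak{p}_n)$ is solvable, which is part (ii). The main obstacle is purely combinatorial bookkeeping: cleanly separating every convolution $\sum_{i+j=n}$ into its coboundary part and its obstruction part and matching the former to the explicit adjoint formulas for $d^2_\alpha c$ and $d^2_\alpha\mathfrak{p}$, which requires repeated careful use of bracket-symmetry and cyclic relabelling in characteristic 2. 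Along the way one should also confirm that $(C_n,Q_n)$ is a legitimate element of $XC^3_\alpha(\fg;\fg)$ — in particular that $Q_n$ polarizes to $C_n$ on odd arguments and that both are $\alpha$-compatible — which follows from the cochain properties of the $(c_i,\mathfrak{p}_i)$ and the lower-order deformation equations.
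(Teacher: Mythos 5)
Your proposal is correct and follows essentially the same route as the paper: expand the Hom-Jacobi identity and the squaring identity \eqref{JISH} in powers of $t$, split each convolution $\sum_{i+j=n}$ into the terms with an index equal to $0$ (which assemble, via the symmetry of the bracket, into $\mathfrak{d}^2_\alpha(c_n,\mathfrak{p}_n)$) and the terms with $i,j\ge 1$ (which give $(C_n,Q_n)$), with $n=1$ yielding the cocycle condition of part (i). The paper likewise verifies that $Q_n$ polarizes to $C_n$, exactly the cochain check you flag at the end.
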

 
 \begin{proof}
 (i) Checking that $c_1$ satisfies the condition \eqref{2-cocCond1alpha} is a routine, see \cite{AMS}. Let us deal with the squaring  $s_t$.
 We have
 \begin{equation}\label{sJIt1}
 \begin{array}{l}
  [s_t(x), \alpha(y)]_t  =  \displaystyle [  s(x)+   \sum_{i\geq 1} {\mathfrak p}_i(x)t^i, \alpha(y)]_t \\[1mm]
 =  \displaystyle [s(x),\alpha(y)]+ \sum_{i\geq 1} c_i(s(x), \alpha(y))t^i + \sum_{i\geq 1}[{\mathfrak p}_i(x), \alpha(y)]t^i + \sum_{i,j\geq 1}c_j({\mathfrak p}_i(x), \alpha(y))t^{i+j}.
 \end{array}
 \end{equation}
 On the other hand,
 \begin{equation}\label{sJIt2}
  \begin{array}{lcl}
 [\alpha(x), [x,y]_t]_t & = &   \displaystyle [ \alpha(x), [x,y]+ \sum_{i\geq 1} c_i(x,y)t^i ]_t\\[2mm]
  & = & \displaystyle [ \alpha(x), [x,y]] + \sum_{i\geq 1} c_i(\alpha(x), [x,y])t^i + \sum_{i\geq 1} [\alpha(x),c_i(x,y)]t^i +  \displaystyle\\[2mm]
  &&+ \sum_{i,j\geq 1} c_i(\alpha(x),c_j(x,y))t^{i+j}.
  \end{array}
 \end{equation}
 Collecting the coefficient of $t$ in the condition
 $
[s_t(x), \alpha(y)]_t =[\alpha(x), [x,y]_t]_t,$ we get 
 \[
 c_1(s(x), \alpha(y))+ [\alpha(y), {\mathfrak p}_i(x)]+c_1(\alpha(x), [x,y])+ [\alpha(x),c_1(x,y)]=0,
 \]
 which corresponds to Condition \eqref{2-cocCond2alpha}.
 Therefore, $(c_1, {\mathfrak p_1})$ is a 2-cocycle on $\fg$ with values in the adjoint representation.

(ii) 
Let us first show that the pair $(C_n,Q_n)$ is a cochain in $XC^3(\fg,\fg)$. Indeed,
\begin{eqnarray*}
   && Q_n(x_1+x_2,y)=\sum_{i,j}c_i(\mathfrak{p}_j(x_1+x_2),\alpha(y))+c_i(c_j(x_1+x_2,y),\alpha(x_1+x_2))\\
    &&= \sum_{i,j}c_i(\mathfrak{p}_j(x_1)+\mathfrak{p}_j(x_2) +c_j(x_1,x_2),\alpha(y))+c_i(c_j(x_1,y)+c_j(x_2,y),\alpha(x_1)+\alpha(x_2))\\
    &&=Q_n(x_1,y)+Q_n(x_2,y)+\sum_{i,j}(c_i(c_j(x_1, x_2),\alpha(y))+c_i(c_j(x_1, y),\alpha(x_2))+c_i(c_j(x_2, y),\alpha(x_1)) )\\
    &&=Q_n(x_1,y)+Q_n(x_2,y)+C_n(x_1, x_2, y).
\end{eqnarray*}


Collecting coefficients of $t^n$ in \eqref{2-cocCond1alpha} leads to  $C_n=d^2_\alpha c_n$, see \cite{AMS}. Let us deal with the squaring. Consider  the coefficient of $t^n$ in the condition $[s_t(x), \alpha(y)]_t =[\alpha(x), [x,y]_t]_t,$ and using  Eqns. (\ref{sJIt1}, \ref{sJIt2}), we get (for all $x\in \fg_\od$ and $y\in \fg$):
\[\begin{array}{lcl}
c_n(s(x), \alpha(y))+[{\mathfrak p}_n(x), \alpha(y)]+\displaystyle \sum_{\substack{i+j=n \\i,j<n}} c_j( {\mathfrak p}_i(x), \alpha(y))&=& c_n(\alpha(x),[x,y])+[\alpha(x), c_n(x,y)]\\[2mm]
&&\displaystyle +\sum_{\substack{i+j=n\\i,j<n}} c_i(\alpha(x), c_j(x,y)).
\end{array}
\]
Let us rewrite this expression. We get
\[
d^2_\alpha  {\mathfrak p}_n(x,y)= \sum_{\substack{i+j=n\\i,j<n}} c_i(\alpha(x), c_j(x,y))+\displaystyle \sum_{\substack{i+j=n \\i,j<n}} c_j( {\mathfrak p}_i(x), \alpha(y)).
\]
Therefore, $(C_n, Q_n)=(d^2_\alpha c_n, d^2_\alpha \mathfrak{p}_n)= {\mathfrak d}^2_\alpha (c_n,\mathfrak{p}_n).$
\end{proof}
Now, we discuss equivalent deformations. 
\begin{Definition}\rm{
Let  $(\fg, [\cdot, \cdot], s, \alpha)$ be  a Hom-Lie superalgebra in characteristic 2. Let  $(\fg_t, [\cdot, \cdot]_t, s_t, \alpha)$ and  $(\tilde{\fg}_t, \widetilde{[\cdot, \cdot]}_t, \tilde{s}_t, \alpha)$  be two deformations of $\fg$, such that $\tilde{[\cdot, \cdot]}_0=[\cdot, \cdot]_0=[\cdot, \cdot]$ and  $\tilde{s}_0=s_0=s$.   We say that the two deformations $\fg_t$ and $\tilde \fg_t$  are equivalent if there exists a $\K[[t]]$-linear map $\tau:\fg_t\rightarrow \tilde \fg_t$ of the form $\tau (a) = \mathrm{id}_\fg(a)+
\sum_{i\geq 1}
\tau_i(a)t^i$ for all  $a\in \fg$, that is an isomorphism of Hom-Lie superalgebras.}
\end{Definition}
\begin{Theorem}\label{thmdefor2}
 Two 1-parameter formal deformations $\fg_t$ and $\tilde \fg_t$ of $\fg$ given by the collections $(c,{\mathfrak p})$ and $(\tilde c, \tilde{\mathfrak p})$ are equivalent through  an isomorphism of the form $\tau  = \mathrm{id}_\fg+
\sum_{i\geq 1}
t^i \tau_i$  if and only if $\tau$ links $(c,{\mathfrak p})$ and $(\tilde c, \tilde{\mathfrak{p}})$ by the following formulae \textup{(}for all $n > 0$\textup{)}:
\begin{equation}\label{equivC}
\sum_{
i+j=n}
\tau_i(\tilde c_j(x, y)) =
\sum_{
i+j+k=n}
c_i(\tau_j(x), \tau_k(y)), \quad \text{for all $x,y\in \fg$}, 
\end{equation}
and \textup{(}for all $x\in \fg_\od$\textup{)}:
\begin{equation}
    \label{equivS}
\displaystyle \sum_{i+j= n} \tau_i(\tilde{\mathfrak p}_{j}(x))=\displaystyle \sum_{i+j=n }c_i(x,\tau_{j}(x))+\sum_{2i+j=n}{\mathfrak p}_j(\tau_i(x))+
\displaystyle
\sum_{\substack{u+v+j=n\\ 1\leq u<v\\1 \leq j}}c_j([\tau_u(x), \tau_v(x)]).
\end{equation}
In particular, if $n=1$ we get 
\[
\begin{array}{lcl}
{\tilde c}_1(x,y) & = & c_1(x,y)+\tau_1([x,y])+[x,\tau_1(y)]+[y,\tau_1(x)], \quad \text{for all $x,y\in \fg$},\\[2mm]
\tilde{\mathfrak p}_1(x) & = & \mathfrak p_1(x)+\tau_1(s(x))+ [x,\tau_1(x)], \quad \text{for all $x\in \fg_\od$}.
\end{array}
\]
Hence, $({\tilde c}_1,\tilde{\mathfrak p}_1)$ and $( c_1,\mathfrak p_1)$ are in the same cohomology class.

\end{Theorem}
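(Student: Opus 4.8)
The plan is to unravel what it means for the $\K[[t]]$-linear map $\tau=\mathrm{id}_\fg+\sum_{i\geq1}t^i\tau_i$ to be an isomorphism of Hom-Lie superalgebras and then to read off (\ref{equivC}) and (\ref{equivS}) by comparing coefficients of $t^n$. By the definition of a morphism (see (\ref{homphialpha})), $\tau$ is an isomorphism precisely when it is invertible and intertwines the three structures, that is
\[
\tau\circ\alpha=\alpha\circ\tau,\qquad \tau(\widetilde{[x,y]}_t)=[\tau(x),\tau(y)]_t,\qquad \tau(\tilde s_t(x))=s_t(\tau(x)).
\]
Invertibility is automatic, since $\tau\equiv\mathrm{id}\ (\mathrm{mod}\ t)$ admits a formal inverse over $\K[[t]]$. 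Expanding the first condition in powers of $t$ gives $\tau_i\circ\alpha=\alpha\circ\tau_i$ for every $i$; because the coefficient module here is the adjoint one ($\beta=\alpha$), this is exactly the compatibility (\ref{cohalphabeta}) making each $\tau_i$ a legitimate $1$-cochain, which I record first.

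For the bracket I would substitute $\tau(x)=\sum_{j\geq0}t^j\tau_j(x)$ (with $\tau_0=\mathrm{id}$) into both sides of $\tau(\widetilde{[x,y]}_t)=[\tau(x),\tau(y)]_t$, expand $\widetilde{[\cdot,\cdot]}_t=\sum_j t^j\tilde c_j$ and $[\cdot,\cdot]_t=\sum_i t^i c_i$, and collect the coefficient of $t^n$. This produces (\ref{equivC}) at once; the computation is the $\mathbb{Z}/2$-graded analogue of the classical one in \cite{AMS} and is routine.

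The heart of the argument is the squaring identity $\tau(\tilde s_t(x))=s_t(\tau(x))$, which is genuinely non-additive and is where characteristic $2$ enters. Here I would use that for any finite family of odd elements $a_k$ of $\fg_t$ the squaring satisfies the polarization expansion
\[
s_t\Big(\sum_k a_k\Big)=\sum_k s_t(a_k)+\sum_{k<l}[a_k,a_l]_t,
\]
together with the homogeneity $s_t(\lambda a)=\lambda^2 s_t(a)$ valid for $\lambda\in\K[[t]]$. Applying this to $a_k=t^k\tau_k(x)$ gives $s_t(a_k)=t^{2k}s_t(\tau_k(x))=\sum_{j}t^{2k+j}\mathfrak p_j(\tau_k(x))$ and $[a_k,a_l]_t=\sum_j t^{k+l+j}c_j(\tau_k(x),\tau_l(x))$, where $\mathfrak p_0=s$ and $c_0=[\cdot,\cdot]$. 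Collecting the coefficient of $t^n$ on both sides of $\tau(\tilde s_t(x))=s_t(\tau(x))$ and isolating the term $k=0$ (for which $\tau_0(x)=x$) from the terms $1\leq k<l$ yields (\ref{equivS}). The main bookkeeping obstacle is exactly keeping track of the degree shift $t^{2k}$ produced by squaring $t^k\tau_k(x)$ and of the cross brackets $[a_k,a_l]_t$: these have no counterpart in the bracket computation and are the reason the squaring relation carries the extra sums absent from (\ref{equivC}).

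The converse follows by reassembling the same power series: if (\ref{equivC}) and (\ref{equivS}) hold for all $n$, with each $\tau_i$ commuting with $\alpha$, then the three intertwining identities hold in $\fg_t$, so $\tau$ is a morphism, and being $\equiv\mathrm{id}\ (\mathrm{mod}\ t)$ it is an isomorphism. Finally, specializing $n=1$ reproduces the two displayed formulae; comparing them with the differential (\ref{diffsuperalpha}) in the adjoint representation, where $x\cdot v=[x,v]$, shows that $(\tilde c_1,\tilde{\mathfrak p}_1)=(c_1,\mathfrak p_1)+\mathfrak d^1_\alpha(\tau_1)$, so the two infinitesimal cocycles represent the same class in $\mathrm H^2_\alpha(\fg;\fg)$.
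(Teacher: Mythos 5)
Your proof is correct and follows essentially the same route as the paper's: expand the three intertwining conditions for $\tau$ in powers of $t$, use the polarization identity for $s_t$ applied to $\tau(x)=\sum_{k\geq 0}t^k\tau_k(x)$, and identify coefficients of $t^n$, with the $n=1$ case exhibiting $(\tilde c_1,\tilde{\mathfrak p}_1)$ and $(c_1,\mathfrak p_1)$ as differing by ${\mathfrak d}^1_\alpha(\tau_1)$ in the adjoint module. One small remark: your polarization bookkeeping correctly produces cross terms $c_j(\tau_u(x),\tau_v(x))$ for all $j\geq 0$ (including $j=0$, i.e.\ the undeformed bracket applied to $\tau_u(x),\tau_v(x)$ with $1\leq u<v$), whereas the last sum in \eqref{equivS} is restricted to $j\geq 1$; this appears to be a slip in the paper's statement and computation rather than a gap in your argument.
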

 \begin{Corollary} Infinitesimal deformations are classified by the cohomology group $\mathrm{H}^2_\alpha(\fg; \fg)$. 
 \end{Corollary}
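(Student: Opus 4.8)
The plan is to exhibit a bijection between the equivalence classes of infinitesimal deformations of $\fg$ and the cohomology group $\mathrm{H}^2_\alpha(\fg;\fg)$, assembled directly from Theorems \ref{thmdefor} and \ref{thmdefor2}.

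First I would identify infinitesimal deformations with $2$-cocycles. An infinitesimal deformation is a pair $(c_1,\mathfrak p_1)$ for which $[\cdot,\cdot]_t=[\cdot,\cdot]+c_1(\cdot,\cdot)t$ and $s_t(\cdot)=s(\cdot)+\mathfrak p_1(\cdot)t$ define a Hom-Lie superalgebra structure modulo $t^2$. Expanding the multiplicativity conditions \eqref{multiplicativity} and \eqref{multsq} for the fixed twist $\alpha$ and collecting the coefficient of $t$ shows, since $\alpha$ is already multiplicative for $([\cdot,\cdot],s)$, that these hold modulo $t^2$ exactly when $\alpha(c_1(x,y))=c_1(\alpha(x),\alpha(y))$ and $\alpha(\mathfrak p_1(x))=\mathfrak p_1(\alpha(x))$; that is, precisely when $(c_1,\mathfrak p_1)$ lies in the cochain space $XC^2_\alpha(\fg;\fg)$ (with $\beta=\alpha$) as defined by \eqref{cohalphabeta}. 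Collecting the coefficient of $t$ in the Jacobi-type identity \eqref{JISH}, as in the proof of Theorem \ref{thmdefor}(i), produces exactly Eqns. \eqref{2-cocCond1alpha} and \eqref{2-cocCond2alpha}. Since each of these extractions is an equivalence, $(c_1,\mathfrak p_1)$ is an infinitesimal deformation if and only if $(c_1,\mathfrak p_1)\in Z^2_\alpha(\fg;\fg)$.

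Next I would determine when two infinitesimal deformations are equivalent. Specializing Theorem \ref{thmdefor2} to $n=1$, an equivalence $\tau=\mathrm{id}_\fg+t\tau_1$ links the defining data by
\[
\tilde c_1(x,y)=c_1(x,y)+\tau_1([x,y])+[x,\tau_1(y)]+[y,\tau_1(x)],\quad \tilde{\mathfrak p}_1(x)=\mathfrak p_1(x)+\tau_1(s(x))+[x,\tau_1(x)].
\]
Reading the module action as the adjoint action, the correction terms are exactly $\mathfrak d^1_\alpha(\tau_1)=(d^1_\alpha\tau_1,\mathfrak q)$, so $(\tilde c_1,\tilde{\mathfrak p}_1)=(c_1,\mathfrak p_1)+\mathfrak d^1_\alpha(\tau_1)$. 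Hence two infinitesimal deformations are equivalent if and only if their cocycles differ by an element of $B^2_\alpha(\fg;\fg)$, i.e. lie in the same cohomology class.

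Combining the two steps, the assignment $(c_1,\mathfrak p_1)\mapsto[(c_1,\mathfrak p_1)]\in\mathrm{H}^2_\alpha(\fg;\fg)$ is well defined on equivalence classes and is a bijection, which is the assertion. The only genuinely delicate point is the first step: one must check that passing from the order-one structure equations to the cocycle conditions is a two-sided equivalence and that it is compatible with the multiplicativity constraints built into $XC^2_\alpha(\fg;\fg)$; once the $n=1$ equivalence formulae are recognized as $\mathfrak d^1_\alpha\tau_1$, the remainder is formal.
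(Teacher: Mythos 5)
Your proposal is correct and follows essentially the route the paper intends: the corollary is meant to be read off from Theorem \ref{thmdefor}(i) (infinitesimal deformations give $2$-cocycles, with the converse coming from reading the order-$t$ extractions as equivalences) together with the $n=1$ case of Theorem \ref{thmdefor2} (equivalence of deformations corresponds to adding $\mathfrak d^1_\alpha\tau_1$, i.e.\ a coboundary). Your identification of the correction terms with $\mathfrak d^1_\alpha(\tau_1)$ under the adjoint action is exactly the point, so the argument matches the paper's.
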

\begin{proof}[Proof of Theorem \ref{thmdefor2}]
Checking Eq. \eqref{equivC} is a routine, see \cite{AMS}. Let us check Eq. \eqref{equivS}. 
 
 We have
 \begin{align*}
 &\tau(\tilde{s}_t(x))=\tau \left (s(x)+\sum_{i\geq 1} \tilde{\mathfrak p}_i(x)t^i \right ) =s(x)+\sum_{i\geq 1}\tilde{\mathfrak p}_i(x)t^i+\sum_{i,j\geq 1}\tau_j(\tilde{ \mathfrak p}_i(x))t^{i+j}+\sum_{j\geq 1} \tau_j(s(x))t^j\\
 &=\sum_{i,j\geq 0}\tau_j(\tilde{ \mathfrak p}_i(x))t^{i+j}
 \end{align*}
 On the other hand, we get 
 \[
 \begin{array}{lcl}
 s_t(\tau(x)) & = & \displaystyle s\left (x+\sum_{i\geq 1} \tau_i(x)t^i \right )+\sum_{i\geq 1} {\mathfrak p}_i \left (x+\sum_{i\geq 1}\tau_i(x)t^i \right )t^j\\[3mm]
 &= & \displaystyle s(x)+\sum_{i\geq 1} s(\tau_i(x))t^{2i}+\sum_{i\geq 1}[x,\tau_i(x)]t^i+\sum_{i\geq 1} {\mathfrak p}_j \left (x+\sum_{i\geq 1}\tau_i(x)t^i \right )t^j\\[2mm]
 &= & \displaystyle s(x)+\sum_{i\geq 1} s(\tau_i(x))t^{2i}+\sum_{i\geq 1}[x,\tau_i(x)]t^i+\sum_{i\geq 1} {\mathfrak p}_j (x)t^j+\sum_{i\geq 1}{\mathfrak p}_j(\tau_i(x)) t^{2i+j}\\[2mm]
 & & \displaystyle +\sum_{i,j\geq 1} c_j(x, \tau_i(x)) t^{i+j} + \displaystyle \sum_{1 \leq j} \sum_{\substack{1 \leq u<v}}c_j(\tau_u(x), \tau_v(x))t^{j+u+v}\\[3mm]
 & =& \displaystyle \sum_{i,j\geq 0}{\mathfrak p}_j(\tau_i(x)) t^{2i+j}+ \displaystyle +\sum_{i,j\geq 0} c_j(x, \tau_i(x))t^{i+j}+ \displaystyle \sum_{1 \leq j} \sum_{\substack{1 \leq u<v}}c_j(\tau_u(x), \tau_v(x))t^{j+u+v}
 \end{array}
 \]
The result follows by identification. 
\end{proof}

\end{document}